\documentclass[12pt]{amsart}
\usepackage{amsmath,amsfonts,amssymb,amsthm,amstext,pgf,graphicx,hyperref,verbatim,lmodern,textcomp,color,young,tikz}
\usetikzlibrary{decorations}
\usepackage[mathscr]{euscript}
\usetikzlibrary{decorations.markings}
\usetikzlibrary{arrows}
\usepackage{float}
\setlength{\oddsidemargin}{0in}
\setlength{\evensidemargin}{0in}
\setlength{\topmargin}{0in}
\setlength{\textwidth}{6.5in}
\setlength{\textheight}{8.5in}
\theoremstyle{plain}
\newtheorem{theorem}{Theorem}[section]
\newtheorem{lemma}[theorem]{Lemma}
\newtheorem{proposition}[theorem]{Proposition}
\newtheorem{corollary}{Corollary}
\newtheorem{rem}{Remark}

\theoremstyle{definition}
\newtheorem{defn}{Definition}[section]


\title{}
\sloppy
\begin{document}
\title[connectivity and spectral radius of proper enhanced power graphs]{On connectivity, domination number and spectral radius of the proper enhanced power graphs of finite nilpotent groups}
\author[Sudip Bera]{Sudip Bera}
\author[Hiranya Kishore Dey]{Hiranya Kishore Dey}
\address[Sudip Bera]{Department of Mathematics, Indian Institute of Science, Bangalore 560 012}
\email{sudipbera@iisc.ac.in}
\address[Hiranya Kishore Dey]{Department of Mathematics, Indian Institute of Technology, Bombay, India.}
\email{hkdey@math.iitb.ac.in} 
\keywords{Nilpotent group; Enhanced power graph; Domination number; Connectivity; Spectral radius}
\subjclass[2010]{05C25; 20D10}
\maketitle	
\begin{abstract}
For a group $G,$ the enhanced power graph of $G$ is a graph with vertex set $G$ in which two distinct elements $x, y$ are adjacent if and only if there exists an element $w$ in $G$ such that both $x$ and $y$ are powers of $w.$ The proper enhanced power graph is the induced subgraph of the enhanced power graph on the set $G \setminus S,$ where $S$ is the set of dominating vertices of the enhanced power graph. In this paper we at first characterize the dominating vertices of enhanced power graph of any finite nilpotent group. Thereafter, we classify all nilpotent groups $G$ such that the proper enhanced power graphs are connected and find out their diameter. We also explicitly find out the domination number of proper enhanced power graphs of finite nilpotent groups. Finally, we determine the multiplicity of the Laplacian spectral radius of the enhanced power graphs of nilpotent groups. 
\end{abstract}
\section{Introduction}
\label{sec:intro} 
The study of graphs associated with various algebraic structures is a topic of increasing interest during the last two decades. The benefit of studying these graphs is
multifold.  They help us to  (1) characterize the resulting graphs, (2) 
characterize the algebraic structures with isomorphic graphs, and (3) also to realize 
the interdependence between the algebraic structures and the corresponding graphs. Besides, these graphs have important applications (see, for example, 
\cite{surveypwrgraphkac1, cayleygraphsckry}) and they are related to automata theory \cite{automatatheory}. Many different types of graphs, specifically
 power graph of semigroup \cite{undpwrgraphofsemgmainsgc1, directedgrphcompropofsemgrpkq3}, group \cite{combinatorialpropertyandpowergraphsofgroupskq1},  intersection power graph of group \cite{intersectionpwegraphb3}, enhanced power graph of a group \cite{firstenhcedpwrstrctreaacns1,enhancedpwrgrapbb3}, comaximal subgroup graph \cite{das-saha-saba} etc. have been introduced to explore the properties of algebraic structures using graph theory.
The concept of a power graph was introduced in the context of semigroup theory by Kelarev and Quin \cite{combinatorialpropertyandpowergraphsofgroupskq1}.
\begin{defn}[\cite{surveypwrgraphkac1,undpwrgraphofsemgmainsgc1, combinatorialpropertyandpowergraphsofgroupskq1}]\label{defn: powr graph}
	Given a group $G,$ the \emph{power graph} $\mathcal{P}(G)$ of $G$ is a simple graph with vertex set $G$ and two vertices $u$ and $v$ are connected by an edge if and only if one of them is the power of another.  
\end{defn}

Another well-studied graph, called \emph{commuting graph} associated with a group $G$ has been studied in \cite{braurflower} as a part of the classification of finite simple groups. For more information about the commuting graph, we refer to
 \cite{ijraeljofmathematics, braurflower, onboundingdiamcommuting}.
\begin{defn}[\cite{braurflower}]\label{defn: commuting graph}
	Let $G$ be a group. The \emph{commuting graph} of $G,$ denoted by $\mathcal{C}(G),$ is the simple graph whose vertex set is a set of non-central elements of $G$ and two distinct vertices $u$ and $v$ are joined if and only if $u $ and $v$ commutes, that is, $uv=vu.$ 	
\end{defn}
In this paper, our topic is the enhanced power graph of a group which is introduced by Alipour et al. in \cite{firstenhcedpwrstrctreaacns1} 
 as follows:
\begin{defn}[\cite{firstenhcedpwrstrctreaacns1}]\label{defn: enhcdpowr graph}
Let $G$ be a group. The \emph{enhanced power graph} of $G,$
denoted by $\mathcal{G}_E(G),$ is the graph with vertex set $G,$ in which two vertices $u$ and $v$ are joined if and only
if there exists an element $w \in G$ such that both $u \in \langle w \rangle $ and $v \in  \langle w \rangle.$ 
\end{defn}
Lots of works have been done recently studying various properties of the enhanced power graph of the finite group. The authors in \cite{firstenhcedpwrstrctreaacns1} 
characterized the finite groups such that any arbitrary pair of these three graphs (power, commuting, enhanced) are equal.
Besides, in \cite{Zahirovienhnacedpwrgraph}, the researchers proved that finite groups with isomorphic enhanced power graphs have isomorphic directed power graphs.
Bera et al. in \cite{enhancedpwrgrapbb3} studied the completeness, dominatability and many other interesting properties of the enhanced power graph. 
Ma and She  in \cite{ma-she} derived the metric dimension whereas Hamzeh et al. in \cite{Hamzeh-ashrafi} derived the automorphism groups 
of enhanced power graphs of finite groups.
 In this paper, we study the connectivity, dominatibility, diameter and Laplacian spectral radius of enhanced power graphs of groups.   
\subsection{Basic definitions and notations}\label{subsection:definitions}
For the convenience of the reader and also for later use, we recall some basic
definitions and notations about graphs. 
Let $\Gamma$ be a graph with vertex set $V$. 
Two elements $u$ and $v$ are said to be adjacent if there is an edge between them. For a vertex $u$, we denote by $N(u)$ the set of vertices which are adjacent to $u$. For a set $V_1 \subseteq V$, define $N(V_1)= \displaystyle \cup  _{u \in V_1} N(u).$ A \emph{path} of length $k$ between two vertices $v_0$ and $v_k$ is an alternating
sequence of vertices and edges $v_0, e_0, v_1, e_1, v_2, \cdots , v_{k-1}, e_{k-1}, v_k$, where the $v_i'$s are distinct
(except possibly the first and last vertices) and $e_i'$s are the edges $(v_i, v_{i+1}).$  A graph $\Gamma$ is said to be \emph{connected} if for any pair of vertices $u$ and $v,$ there exists a path between $u$ and $v.$ The distance between two vertices $u$ and $v$ in a connected graph $\Gamma$ is the length of the shortest path between them and it is denoted by $d(u, v).$ Clearly, if $u$ and $v$ are adjacent, then $d(u, v)=1.$ For a graph $\Gamma,$ its \emph{diameter} is defined as $\text{diam}(\Gamma)= \max_{u, v \in V} d(u, v).$ That is, the diameter of graph is the largest possible distance between pair of vertices of a graph. $\Gamma$ is said to be \emph{complete} if any two distinct vertices are adjacent. For a disconnected graph $\Gamma$, we denote the set of connected components of it by $C(\Gamma).$

 A vertex of a graph $\Gamma$ is called a 
\emph{dominating vertex} if it is adjacent to every other
vertex. For a graph $\Gamma,$ let $\text{\emph{Dom}}(\Gamma)$ denote the set of all dominating vertices in $\Gamma.$  The \emph{vertex connectivity} of a graph $\Gamma,$ denoted by $\kappa{(\Gamma)}$ is
the minimum number of vertices that need to be removed from the vertex set $\Gamma$ so that the
induced subgraph of $\Gamma$ on the remaining vertices is disconnected. The complete graph with $n$ vertices has vertex connectivity $n-1.$  A set $S \subseteq V(\Gamma)$ is said to be a \emph{dominating set} if every vertex of $V \setminus S$ is adjacent to some vertex of $S$. The minimum possible number of a dominating set is called the \emph{domination number} and it is denoted by $\gamma(\Gamma).$ From the definition of the enhanced power graph, it is clear that the identity of the group is always a dominating vertex.  
The enhanced power graph is called \emph{dominatable} if it has a dominating vertex other than identity.  For more on graph theory we refer \cite{graphthrybondymurti, algbraphgodsil, graphthrywest}.

Throughout this paper we consider $G$ as a finite group.  $|G|$ denotes the cardinality of the set $G.$ For a prime $p,$ a group $G$ is said to be a $p$-group if $|G|=p^{r}, r\in \mathbb{N}.$ If $|G|= p_{\ell}^{r}$ for some prime $p_{\ell}$, then we say that $G$ is a $p_{\ell}$-group. For a subgroup $H$ of $G$, we call $H$ a $p$-order subgroup if $|H|=p.$ For two $p$-order subgroups $H_1$ and $H_2$, we say $H_1$ and $H_2$ to be distinct if $|H_1 \cap H_2|=1.$  For a $p$-group $G_p$, let $\text{exp}(G_p)$ be the highest natural number $t$ such that there exists an element of order $p^t$ in $G_p$.  
 For any element $g \in G, \text{o}(g)$ denotes the order of the element $g .$
   Let $m$ and $n$ be any two positive integers, then the greatest common divisor of $m$ and $n$ is denoted by $\text{gcd}(m, n).$ The Euler's phi function $\phi(n)$ is the number of integers $k$ in the range $1 \leq k \leq n$ for which the  $\text{gcd}(n, k)$ is equal to $1.$ The set $\{1, 2, \cdots, n\}$ is denoted by $[n].$
   
The plan of the paper is as follows. In Section \ref{sec:main_results} we state our main results, and in Section \ref{Sec:Preliminaries}, we mention some earlier known results. In Section \ref{sec:dom_vertices}, we completely characterize the dominating vertices of the enhanced power graphs of finite nilpotent groups. In Section \ref{sec:connectivity_diameter}, we study about the connectivity and diameter of proper enhanced power graphs of nilpotent groups. The domination number of proper enhanced power graphs is studied in Section \ref{sec:domination_number_proper}. Finally, multiplicity of the Laplacian spectral radius of enhanced power graphs is given in Section \ref{sec:mul_sec_radius}.
\section{Main results}
\label{sec:main_results}
In this section, we state and motivate our main results of this paper. 
If a non-complete graph $\Gamma$ has a dominating vertex, then clearly the graph is connected, has domination number $1$ and diameter $2$. Therefore, for any graph with a dominating vertex, the properties connectivity, domination number, diameter are not interesting. In this respect, the authors in \cite[Question 40]{firstenhcedpwrstrctreaacns1} asked about the connectivity of power graphs when all the dominating vertices are removed. Recently, Cameron and Jafari in \cite{HeidarJafari} answered this question for power graphs. Bera et al. in \cite{bera-dey-sajal} answered the same question for the enhanced power graphs of finite abelian groups. In this paper, we investigate the connectivity, domination number, and diameter for the enhanced power graphs of finite nilpotent groups after the dominating vertices are removed. To seek the answer to this question, define the following graph:
\begin{defn}\label{defn: proper enhacd pwr graph}
For a group $G,$ the \emph{proper enhanced power graph} of $G,$ denoted by $\mathcal{G}^{**}_E(G),$ is the graph obtained by deleting all the dominating vertices from the enhanced power graph $\mathcal{G}_E(G).$
\end{defn}
Therefore, for studying the proper enhanced power graph, we first need to characterize all the dominating vertices of the graph $\mathcal{G}_E(G)$ for a finite group $G$. Cameron and Jafari in \cite{HeidarJafari} characterized the dominating vertices of the power graph for any finite group $G$. Bera et al. in \cite{enhancedpwrgrapbb3} characterized the 
dominatability of the enhanced power graph for any finite abelian group $G.$ In this paper, we at first extend this result to finite nilpotent groups and for that purpose, we first recall the following structure of a finite nilpotent group.  
\subsection{Nilpotent group}\label{Defn: Nilpotent group}
A finite group $G$ is nilpotent if and only if $G\cong P_1\times \cdots\times P_r,$ where for each $i\in[r], P_i$ is a sylow subgroup of order $p_i^{t_i}$ of $G.$ So, for a finite nilpotent group $G,$ we have the following cases:
\begin{enumerate}
\item 
No sylow subgroups of $G$ are either cyclic or generalized quaternion.
\item
$G$ has cyclic sylow subgroups. In this case, $G \cong G_1\times \mathbb{Z}_n,$ where $G_1$ is a nilpotent group having no sylow subgroups which are either cyclic or generalized quaternion and $\text{gcd}(|G_1|, n)=1.$
\item
$G$ has a sylow subgroup isomorphic to generalized quaternion. Here $G \cong G_1\times Q_{2^k},$ where $G_1$ is described as $(2)$ and $\text{gcd}(|G_1|, 2)=1.$
\item
$G$ has sylow subgroups which are cyclic and generalized quaternion. In this case, $G \cong G_1\times \mathbb{Z}_n\times Q_{2^k},$ where $G_1$ is described as $(2)$ and $\text{gcd}(|G_1|, n)=\text{gcd}(|G_1|, 2)=\text{gcd}(n, 2)=1.$ 
\end{enumerate}
We now characterize the dominatability of the enhanced power graph for any finite nilpotent group $G$. 
\begin{theorem}\label{Thm: G' nilpotent dominatability iff}
Let $G$ be a finite nilpotent group. Then $\text{Dom}(\mathcal{G}_E(G))=\{ e \}$ if and only if no sylow subgroups of $G$ are either cyclic or generalized quarternion.
\end{theorem}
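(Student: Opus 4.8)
The plan is to reduce the assertion to a statement about $p$-groups via the Sylow decomposition $G\cong P_1\times\cdots\times P_r$ recalled in Subsection~\ref{Defn: Nilpotent group}, and then to settle the $p$-group case using the classical fact that a finite $p$-group with a unique subgroup of order $p$ is cyclic or generalized quaternion. Throughout I would use the immediate reformulation of adjacency: in $\mathcal{G}_E(G)$ two vertices $x,y$ are adjacent if and only if $\langle x,y\rangle$ is cyclic (a generator of $\langle x,y\rangle$ serves as the witness $w$, and conversely $x,y\in\langle w\rangle$ forces $\langle x,y\rangle\le\langle w\rangle$ to be cyclic); hence $x\in\text{Dom}(\mathcal{G}_E(G))$ exactly when $\langle x,y\rangle$ is cyclic for every $y\in G$.

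\textbf{Step 1 (reduction to $p$-groups).} Since $|P_1|,\dots,|P_r|$ are pairwise coprime, every subgroup of $P_1\times\cdots\times P_r$ splits as the direct product of its coordinate projections; applied to the subgroup generated by two elements this gives
\[
\langle(x_1,\dots,x_r),(y_1,\dots,y_r)\rangle=\langle x_1,y_1\rangle\times\cdots\times\langle x_r,y_r\rangle,
\]
and a direct product of groups of pairwise coprime orders is cyclic if and only if each factor is cyclic. Therefore $(x_1,\dots,x_r)$ is a dominating vertex of $\mathcal{G}_E(G)$ if and only if each $x_i$ is a dominating vertex of $\mathcal{G}_E(P_i)$, i.e.
\[
\text{Dom}(\mathcal{G}_E(G))=\text{Dom}(\mathcal{G}_E(P_1))\times\cdots\times\text{Dom}(\mathcal{G}_E(P_r)).
\]
In particular $\text{Dom}(\mathcal{G}_E(G))=\{e\}$ if and only if $\text{Dom}(\mathcal{G}_E(P_i))=\{e\}$ for every $i$, so it suffices to prove: for a nontrivial $p$-group $P$, $\text{Dom}(\mathcal{G}_E(P))=\{e\}$ if and only if $P$ is neither cyclic nor generalized quaternion.

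\textbf{Step 2 (the $p$-group case).} Suppose $P$ is neither cyclic nor generalized quaternion. By the classical theorem quoted above, $P$ has two distinct subgroups $\langle a\rangle,\langle b\rangle$ of order $p$. If some $x\ne e$ were dominating, then $\langle x,a\rangle$ and $\langle x,b\rangle$ would be cyclic $p$-groups; each contains the unique order-$p$ subgroup of $\langle x\rangle$ as well as $\langle a\rangle$ (resp.\ $\langle b\rangle$), and a cyclic $p$-group has only one subgroup of order $p$, forcing $\langle a\rangle=\langle b\rangle$, a contradiction. Hence $\text{Dom}(\mathcal{G}_E(P))=\{e\}$. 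Conversely, if $P$ is cyclic then every subgroup of $P$ is cyclic, $\mathcal{G}_E(P)$ is complete, and $\text{Dom}(\mathcal{G}_E(P))=P\supsetneq\{e\}$; and if $P\cong Q_{2^k}$, then $P$ has a unique involution $z$, every nontrivial element of the $2$-group $P$ has a power equal to $z$, so $z\in\langle y\rangle$ and thus $\langle z,y\rangle=\langle y\rangle$ is cyclic for every $y\in P$, making $z$ a dominating vertex and $\text{Dom}(\mathcal{G}_E(P))\supsetneq\{e\}$.

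\textbf{Step 3 (assembling).} If no Sylow subgroup of $G$ is cyclic or generalized quaternion, then $\text{Dom}(\mathcal{G}_E(P_i))=\{e\}$ for every $i$ by Step 2, so $\text{Dom}(\mathcal{G}_E(G))=\{e\}$ by Step 1. Conversely, if some $P_j$ is cyclic or generalized quaternion, it has a dominating vertex $x_j\ne e$, and then the tuple with $j$-th coordinate $x_j$ and all other coordinates trivial lies in $\text{Dom}(\mathcal{G}_E(G))\setminus\{e\}$. The only genuinely non-routine input is the classical classification of $p$-groups with a unique subgroup of order $p$; the step requiring the most care is the coprime direct-product manipulation of Step~1 (alternatively, one can instead invoke the known dominatability characterization for abelian groups of~\cite{enhancedpwrgrapbb3} and handle only the generalized quaternion Sylow factor by hand).
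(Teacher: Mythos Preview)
Your proof is correct, and it takes a genuinely different route from the paper's. The paper deduces Theorem~\ref{Thm: G' nilpotent dominatability iff} as a corollary of the full classification Theorem~\ref{Thm: Dom set for nilpotent grp}, which in turn is proved by treating separately the four structural cases of a nilpotent group (Propositions~\ref{prop: dom; Each sylow is neither cyclic nor generalized quaternion group}--\ref{prop:dom_cyclic_quarternion_normal}); in particular, the converse direction (``some Sylow is cyclic or generalized quaternion implies $\text{Dom}\neq\{e\}$'') is obtained only after the explicit Dom-set computations in those cases. By contrast, your Step~1 establishes the \emph{equality} $\text{Dom}(\mathcal{G}_E(G))=\prod_i\text{Dom}(\mathcal{G}_E(P_i))$ directly from coprimality of the Sylow factors, which sharpens the paper's Proposition~\ref{prop: dom for product of groups} (the paper proves only the inclusion $\subseteq$ and notes that equality fails in general). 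This lets you reduce immediately to the $p$-group case and handle both directions uniformly, without the case-by-case work. Your approach is cleaner and more conceptual for Theorem~\ref{Thm: G' nilpotent dominatability iff} in isolation; the paper's approach has the advantage of yielding the stronger Theorem~\ref{Thm: Dom set for nilpotent grp} (the explicit dominating sets) along the way, which is needed later for the spectral-radius multiplicity result.
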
 
Subsequently, we characterize all the dominating vertices of the enhanced power graph $\mathcal{G}_E(G)$ for any finite nilpotent group $G$. Theorem \ref{Thm: Dom set for nilpotent grp} tells about that. We move on to the connectivity of the proper enhanced power graph $\mathcal{G}_E^{**}(G)$ and our approach depends on whether $G$ has a sylow subgroup that is generalized quarternion. In this context, we have the following two results which completely characterize the connectivity of the graph $\mathcal{G}_E^{**}(G).$ 
\begin{theorem}\label{Thm:no sylow subgroups quaternion, ** connected iff}
Let $G$ be a finite nilpotent group that does not have any sylow subgroups which are generalized quaternion. That is, 
\begin{enumerate}
\item 
either $G=G_1,$ where $G_1$ is a finite nilpotent group which does not have any sylow subgroups which are either cyclic or generalized quaternion,
\item
or $G=G_1\times \mathbb{Z}_n,$ where $G_1$ is described as $(1)$ and $\text{gcd}(|G_1|, n)=1.$	
\end{enumerate}
Then $\mathcal{G}^{**}_E(G)$ is disconnected if and only if $G_1$ is $p$-group.	
\end{theorem}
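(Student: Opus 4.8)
The plan is to split the analysis according to the two cases in the statement and, within each, to analyze when two non-dominating vertices can be joined by a path. The key structural fact is that in a nilpotent group $G\cong P_1\times\cdots\times P_r$, an element $x=(x_1,\dots,x_r)$ lies in a cyclic subgroup generated by $w=(w_1,\dots,w_r)$ iff each $x_i\in\langle w_i\rangle$; consequently two vertices $x=(x_i)$ and $y=(y_i)$ are adjacent in $\mathcal{G}_E(G)$ iff for every $i$ the pair $x_i,y_i$ lies in a common cyclic subgroup of $P_i$. In particular, if some coordinate $P_i$ is such that $x_i,y_i$ both have large enough order to be forced apart in $\mathcal{G}_E(P_i)$, the two vertices are non-adjacent; but adjacency can still be ``rescued'' along a path through vertices with identity in the offending coordinate. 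I will need the characterization of dominating vertices from Theorem \ref{Thm: G' nilpotent dominatability iff} and the explicit description in Theorem \ref{Thm: Dom set for nilpotent grp} throughout, since $\mathcal{G}_E^{**}(G)$ is obtained by deleting exactly those vertices.

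First I would handle the ``only if'' direction, i.e. show that if $G_1$ is a $p$-group then $\mathcal{G}_E^{**}(G)$ is disconnected. In case (1), $G=G_1$ is a $p$-group with no cyclic Sylow subgroup (so it is not cyclic), and by Theorem \ref{Thm: G' nilpotent dominatability iff} the only dominating vertex is $e$; thus $\mathcal{G}_E^{**}(G)=\mathcal{G}_E(G)\setminus\{e\}$, and one shows this is disconnected by exhibiting two elements of order $p$ lying in distinct (i.e. trivially intersecting) cyclic subgroups that cannot be connected once $e$ is removed — for a non-cyclic $p$-group there are at least two such subgroups, and any $w$ witnessing an edge between vertices from different ``maximal cyclic'' pieces would have to contain both, forcing it into a common cyclic overgroup, impossible here. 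In case (2), $G=G_1\times\mathbb{Z}_n$ with $G_1$ a $p$-group, the dominating set (Theorem \ref{Thm: Dom set for nilpotent grp}) consists of elements of the form $(e, z)$; after deleting them, a vertex $(x,z)$ with $x\neq e$ lies in a component determined by which minimal subgroup of $G_1$ contains $x$, and again two elements of order $p$ in distinct subgroups of $G_1$ give disconnected pieces. The main point in both subcases is that the $\mathbb{Z}_n$-coordinate contributes nothing toward reconnecting things, since every cyclic subgroup of $G_1\times\mathbb{Z}_n$ projects to a cyclic subgroup of the $p$-group $G_1$.

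Next, the ``if'' direction: assume $\mathcal{G}_E^{**}(G)$ is disconnected and show $G_1$ is a $p$-group. Equivalently, assume $G_1$ is \emph{not} a $p$-group and prove connectivity. Write $G_1\cong P_1\times\cdots\times P_r$ with $r\ge 2$ (in case (2), also a cyclic factor $\mathbb{Z}_n$ coprime to $|G_1|$). The strategy is to produce an explicit ``hub'': for each $i$, pick a generator $g_i$ of a suitable cyclic subgroup of $P_i$, and show that every non-dominating vertex $x=(x_1,\dots,x_r,\dots)$ is connected by a short path to a vertex supported in a single coordinate $P_i$ (namely one $i$ with $x_i\neq e$), and then that all such ``single-coordinate'' vertices are mutually connected. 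The key computation is: given $x$ with support in coordinates $i$ and $j$ (say), the vertex $x$ is adjacent to the vertex $x'$ obtained by zeroing out the $j$-th coordinate, provided $x'$ is still non-dominating — and because $r\ge 2$, each $P_\ell$ nontrivial, there is enough room to route around dominating vertices. Iterating collapses any vertex to a single-coordinate one; and two single-coordinate vertices, in coordinates $i\ne j$, are joined via an element supported in both coordinates $i$ and $j$, which is non-dominating precisely because it is not of the special form identified in Theorem \ref{Thm: Dom set for nilpotent grp} (this is where coprimality of the orders and $r\ge2$ is used).

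I expect the main obstacle to be the bookkeeping around \emph{which} vertices are dominating, since the path-contraction argument in the ``if'' direction must at every step stay inside $\mathcal{G}_E^{**}(G)$, i.e. avoid the deleted set. The dominating vertices of $\mathcal{G}_E(G)$ for nilpotent $G$ (from Theorem \ref{Thm: Dom set for nilpotent grp}) are concentrated in the cyclic and quaternion Sylow factors, so when $G$ has no quaternion factor and $G_1$ is non-$p$ they are confined to the $\mathbb{Z}_n$-part together with the identity of $G_1$; the delicate case is a vertex whose $G_1$-support is a single coordinate and whose $\mathbb{Z}_n$-coordinate is a generator — one must check such a vertex (if non-dominating) still connects into the hub, possibly by first moving within its own ``cyclic block'' to reach an element with trivial $\mathbb{Z}_n$-coordinate. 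Handling case (2) uniformly with case (1) by treating $\mathbb{Z}_n$ as ``just another coordinate that never helps reconnect $G_1$'' is the cleanest route, and I would set up one lemma — that adjacency in $\mathcal{G}_E(G)$ is coordinatewise common-cyclicity — and feed it into both directions.
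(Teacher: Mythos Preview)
Your proposal is correct and follows essentially the same route as the paper: the paper's key tool (Lemma~\ref{lemma:any_odtwo_commuting_elements_of_prime_order_adjacent}, that in a nilpotent group two commuting elements of coprime order are adjacent) is exactly your ``coordinatewise common-cyclicity,'' and the paper's reduction from arbitrary vertices to elements of prime order is your ``collapse to a single coordinate.'' Two small sharpenings: in the disconnection argument for case~(1) you phrase things for single edges, but (as you correctly indicate in case~(2)) the invariant---the unique order-$p$ subgroup contained in $\langle v\rangle$---must be propagated along an entire path; and your bookkeeping worry about avoiding dominating vertices is largely unnecessary here, since by Theorem~\ref{Thm: Dom set for nilpotent grp} any vertex with nontrivial $G_1$-component is automatically non-dominating.
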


\begin{theorem}\label{Thm: having sylow subgroups quaternion, ** connected }
	Let $G$ be a finite nilpotent group having a sylow subgroup which is generalized quaternion. Then $\mathcal{G}^{**}_E(G)$ is connected.	
\end{theorem}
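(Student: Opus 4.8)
A finite nilpotent group with a generalized quaternion Sylow subgroup decomposes as $G\cong K\times Q_{2^{k}}$ with $k\ge 3$, where $K$ is the direct product of the Sylow subgroups of $G$ for the odd primes, so $\gcd(|K|,|Q_{2^{k}}|)=1$; let $z$ denote the unique involution of $Q_{2^{k}}$. Coprimality gives $\langle(u,v)\rangle=\langle u\rangle\times\langle v\rangle$ for every $(u,v)\in G$, so $(g_{1},x_{1})$ and $(g_{2},x_{2})$ are adjacent in $\mathcal{G}_E(G)$ if and only if $g_{1},g_{2}$ lie in a common cyclic subgroup of $K$ and $x_{1},x_{2}$ lie in a common cyclic subgroup of $Q_{2^{k}}$. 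Feeding this into Theorem~\ref{Thm: Dom set for nilpotent grp} (which it also makes transparent) yields $\text{Dom}(\mathcal{G}_E(G))=\text{Dom}(\mathcal{G}_E(K))\times\{1,z\}$; in particular $(g,x)$ is a vertex of $\mathcal{G}^{**}_E(G)$ whenever $x\notin\{1,z\}$, and also whenever $g\notin\text{Dom}(\mathcal{G}_E(K))$.

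\textbf{A hub vertex.} The proof hinges on two elementary facts about $Q_{2^{k}}$: $(a)$ $z$ lies in every nontrivial cyclic subgroup, so $x$ and $z$ both lie in $\langle x\rangle$ for every $x\ne 1$; and $(b)$ $Q_{2^{k}}$ contains an element $b$ of order $4$, with $\langle b\rangle=\{1,b,z,b^{-1}\}\ni z$. Fix such a $b$ and put $v_{0}:=(e_{K},b)$. Since $b\notin\{1,z\}$, $v_{0}$ is a vertex of $\mathcal{G}^{**}_E(G)$, and by the adjacency criterion $v_{0}$ is adjacent to $(g,y)$ for every $g\in K$ and every $y\in\{1,z,b,b^{-1}\}$ for which $(g,y)$ survives in $\mathcal{G}^{**}_E(G)$; likewise, for any $g\in K$ the vertex $(g,z)$ is adjacent to $(g,1)$ and to $(g,x)$ for all $x\ne 1$. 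The plan is to join every vertex of $\mathcal{G}^{**}_E(G)$ to $v_{0}$.

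\textbf{Joining an arbitrary vertex to $v_{0}$.} Let $(g,x)$ be a vertex of $\mathcal{G}^{**}_E(G)$. If $g\notin\text{Dom}(\mathcal{G}_E(K))$, then $(g,1)$ and $(g,z)$ are vertices of $\mathcal{G}^{**}_E(G)$, and $(g,1)\sim v_{0}$ by $(b)$ while $(g,x)\sim(g,z)\sim v_{0}$ for $x\ne1$ by $(a)$ and $(b)$; in both cases $(g,x)$ is joined to $v_{0}$. If instead $g\in\text{Dom}(\mathcal{G}_E(K))$, then $x\notin\{1,z\}$, since otherwise $(g,x)$ would be a dominating vertex of $\mathcal{G}_E(G)$. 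Using that $K$ is not cyclic — equivalently $G\not\cong\mathbb{Z}_{m}\times Q_{2^{k}}$ — we have $\text{Dom}(\mathcal{G}_E(K))\subsetneq K$, so we may pick $h\in K\setminus\text{Dom}(\mathcal{G}_E(K))$; as $g$ is dominating in $\mathcal{G}_E(K)$ it shares a cyclic subgroup of $K$ with $h$, hence $(g,x)\sim(h,x)$, and $(h,x)$ is covered by the previous case. Thus every vertex lies within distance $3$ of $v_{0}$, and $\mathcal{G}^{**}_E(G)$ is connected.

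\textbf{Where the work is.} Essentially everything is routine bookkeeping with the coordinatewise adjacency condition; the only point that needs care is the case $g\in\text{Dom}(\mathcal{G}_E(K))$, where one has to exhibit a non-dominating element $h$ of $K$ lying in a common cyclic subgroup with a prescribed dominating element of $\mathcal{G}_E(K)$ — equivalently, one must be sure the bridging vertices $(g,1)$, $(g,z)$ and $v_{0}$ are not themselves absorbed into $\text{Dom}(\mathcal{G}_E(G))$. This is precisely where the explicit description of $\text{Dom}(\mathcal{G}_E(K))$ coming from Theorem~\ref{Thm: Dom set for nilpotent grp} is used, together with the non-cyclicity of the odd part $K$; when $G\cong\mathbb{Z}_{m}\times Q_{2^{k}}$ these bridging vertices all lie in $\text{Dom}(\mathcal{G}_E(G))$ and the graph genuinely breaks apart, so some such hypothesis is needed at this step. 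The one structural feature doing the real lifting is the universal-bridge role of the involution $z$, which is exactly what distinguishes the generalized quaternion case from the situation treated in Theorem~\ref{Thm:no sylow subgroups quaternion, ** connected iff}.
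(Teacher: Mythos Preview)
Your argument is sound, and you have correctly identified a genuine gap in the theorem as stated: when the odd part $K$ is cyclic (possibly trivial), so that $G\cong\mathbb{Z}_m\times Q_{2^k}$ for odd $m\ge 1$, the graph $\mathcal{G}_E^{**}(G)$ is disconnected --- for $G=Q_8$ one obtains the three components $\{\pm i\},\{\pm j\},\{\pm k\}$, and the same separation persists after crossing with $\mathbb{Z}_m$. The paper's own proof (via Theorems~\ref{ thm:graph ** connected for G times quaternon group} and~\ref{Thm: ** connected when G times Cyclic times quaternion}) makes the same tacit assumption: Case~2 of Theorem~\ref{ thm:graph ** connected for G times quaternon group} selects an element of order dividing $|G_1|$ and coprime to $2$, which does not exist when $G_1$ is trivial. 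So the hypothesis ``$K$ not cyclic'' (equivalently, $G_1\neq\{e\}$) that you insert mid-proof is needed in both arguments, and the theorem should be read with that restriction.

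Your route to connectivity is organised differently from the paper's. The paper takes an arbitrary pair $v_1,v_2$ and splits into cases according to which of $|G_1|$ and $2$ divide $\text{o}(v_1),\text{o}(v_2)$, building paths of length at most $4$ via Lemma~\ref{lemma:any_odtwo_commuting_elements_of_prime_order_adjacent}. You instead anchor everything at a single hub $v_0=(e_K,b)$ with $\text{o}(b)=4$ and route each vertex to $v_0$ through at most two intermediaries, using the coordinatewise adjacency criterion $\langle(u,v)\rangle=\langle u\rangle\times\langle v\rangle$ directly. This is tidier and makes the point of failure transparent: the step that breaks when $K$ is cyclic is exactly the one where you must produce $h\notin\text{Dom}(\mathcal{G}_E(K))$. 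Both approaches ultimately rest on the same structural fact --- the unique involution $z$ lies in every nontrivial cyclic subgroup of $Q_{2^k}$ --- to glue the $2$-part together.
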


We also find the diameter of the proper enhanced power graph $\mathcal{G}_E^{**}(G)$ for any finite nilpotent group $G$ in
Theorem \ref{thm:diam_prop_enhcd_pwr_abelian}. Moreover, we improve the earlier known upper bound on the vertex  connectivity of the enhanced power graph of a finite abelian group. In this context, our result is Theorem \ref{improved vc of enhced pwr raph for grnrral abelian grp}.

It is clear that for any finite group $G$, the proper enhanced power graph $\mathcal{G}_E^{**}(G)$ has no dominating vertex and therefore clearly, $\gamma(\mathcal{G}_E^{**}(G))>1.$ One of the main goals of this paper is to find the domination number for the proper enhanced power graph $\gamma(\mathcal{G}_E^{**}(G))$. The way we approach towards finding the domination number depends on the connectivity of the enhanced power graph.  We first consider the case when $\mathcal{G}_E^{**}(G)$ is disconnected and on this theme, we have the following result. 

\begin{theorem}
	\label{thm:domination number p-group times cyclic_nilpotent}
	Let $G_1$ be a finite  $p$-group which is neither cyclic nor generalized quarternion. Then 
	$\gamma(\mathcal{G}^{**}_E(G_1))= \text{number of distinct p-order subgroups of } G_1 .$ 
	Let $G$ be a nilpotent group such that  $G = G_1 \times \mathbb{Z}_n,$ where $r \geq 2$ and $\text{gcd}(p, n)=1.$ Then also,  $\gamma(\mathcal{G}_{E}^{**} (G ))= \text{number of distinct p-order subgroups of } G_1 .$ 
\end{theorem}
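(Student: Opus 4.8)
The plan is to work first with the $p$-group $G_1$ and then deduce the statement for $G_1\times\mathbb Z_n$. Since $G_1$ is neither cyclic nor generalized quaternion, Theorem~\ref{Thm:no sylow subgroups quaternion, ** connected iff} tells us that $\mathcal G_E^{**}(G_1)$ is disconnected; in fact I expect the connected components to be governed by the cyclic subgroups of $G_1$, and the natural guess is that each component of $\mathcal G_E^{**}(G_1)$ is the set of elements lying in a maximal cyclic subgroup that contains a given subgroup of order $p$. The first step is therefore to describe the components precisely: for $x,y\in G_1\setminus\mathrm{Dom}(\mathcal G_E(G_1))$, show $x$ and $y$ lie in the same component if and only if the (unique) subgroup of order $p$ inside $\langle x\rangle$ equals the one inside $\langle y\rangle$ — using the fact that in a $p$-group a cyclic subgroup has a unique subgroup of order $p$, and that adjacency in the enhanced power graph forces the two cyclic subgroups to lie in a common cyclic subgroup, hence to share their order-$p$ subgroup. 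This shows the number of components equals the number of distinct order-$p$ subgroups of $G_1$.

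Next I would prove that each component admits a dominating vertex, so that its contribution to the domination number is exactly $1$, giving $\gamma(\mathcal G_E^{**}(G_1))$ equal to the number of components, i.e. the number of distinct $p$-order subgroups. Concretely: fix a subgroup $H$ of order $p$; the corresponding component $C_H$ consists of all non-dominating $x$ with $H\le\langle x\rangle$. Pick $g\in C_H$ of maximal order among elements of $C_H$; I claim every element of $C_H$ is a power of $g$, hence $g$ dominates $C_H$. This needs the structural input that the elements of $G_1$ whose cyclic span contains $H$ actually all sit inside a single maximal cyclic subgroup — here one must be a little careful, because a priori there could be two incomparable maximal cyclic subgroups both containing $H$. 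The resolution is that if $\langle g_1\rangle$ and $\langle g_2\rangle$ both contain $H$ then $g_1,g_2$ are adjacent in $\mathcal G_E(G_1)$ (both powers of a common $w$), so they are genuinely in the same component and, chasing the common overgroup, one shows the component is dominated; alternatively one shows directly that a component $C_H$ with no dominating vertex would have to be disconnected, a contradiction. This "each component is a cone over one vertex" claim is the main obstacle, and it is exactly where the hypothesis that $G_1$ is not cyclic and not generalized quaternion is used (to guarantee, via Theorem~\ref{Thm: G' nilpotent dominatability iff}, that the dominating set of $\mathcal G_E(G_1)$ is just $\{e\}$ together with the appropriate cyclic/quaternion exceptions, so that removing $\mathrm{Dom}$ leaves these cones intact).

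Finally I would pass to $G=G_1\times\mathbb Z_n$ with $\gcd(|G_1|,n)=1$ and $r\ge 2$ (so $G_1$ is not cyclic). By Theorem~\ref{Thm:no sylow subgroups quaternion, ** connected iff} this graph is again disconnected. The key observation is that an element $(x,z)\in G$ generates a cyclic group $\langle(x,z)\rangle\cong\langle x\rangle\times\langle z\rangle$, and two elements $(x,z),(x',z')$ are adjacent in $\mathcal G_E(G)$ iff $x,x'$ lie in a common cyclic subgroup of $G_1$ (the $\mathbb Z_n$-coordinate never obstructs adjacency, since $\langle z\rangle\times\langle z'\rangle$ is cyclic as $\gcd$ conditions force $\mathbb Z_n$ to be... wait, $\mathbb Z_n$ itself need not be cyclic of coprime order issues — but $\mathbb Z_n$ is cyclic, so any two of its subgroups lie in $\mathbb Z_n$). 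Hence the projection $G\to G_1$ induces a bijection between components of $\mathcal G_E^{**}(G)$ and components of $\mathcal G_E^{**}(G_1)$, provided we check that $\mathrm{Dom}(\mathcal G_E(G))$ pulls back correctly — which is handled by Theorem~\ref{Thm: Dom set for nilpotent grp} / Theorem~\ref{Thm: G' nilpotent dominatability iff}. Moreover each component of $\mathcal G_E^{**}(G)$ still has a dominating vertex: if $g$ dominates $C_H\subseteq G_1$ and $z_0$ generates $\mathbb Z_n$, then $(g,z_0)$ dominates the corresponding component of $\mathcal G_E^{**}(G)$, because for any $(x,z)$ in that component $x\in\langle g\rangle$ and $z\in\langle z_0\rangle$, so both are powers of a suitable generator of $\langle g\rangle\times\langle z_0\rangle$. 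Therefore $\gamma(\mathcal G_E^{**}(G))$ equals the number of components, which equals the number of distinct $p$-order subgroups of $G_1$, completing the proof.
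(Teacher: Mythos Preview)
Your overall architecture matches the paper's: first identify the components of $\mathcal G_E^{**}(G_1)$ with the distinct order-$p$ subgroups (this is the paper's Theorem~\ref{thm:components_of_G_abelian}, proved exactly via the argument you sketch using Lemma~\ref{lema: p and p^i orderd path connd abelong< b>}), then exhibit one dominating vertex per component, and finally transport everything to $G_1\times\mathbb Z_n$ via the projection. The gap is in your choice of dominator.

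You pick $g\in C_H$ of \emph{maximal} order and claim every element of $C_H$ is a power of $g$. This is false. Take $G_1=\mathbb Z_{p^2}\times\mathbb Z_p$ and $H=\langle(p,0)\rangle$. Both $(1,0)$ and $(1,1)$ lie in $C_H$, both have the maximal order $p^2$, yet neither is a power of the other and they are \emph{not} adjacent in $\mathcal G_E(G_1)$: any common cyclic overgroup would have to have order $\ge p^2$, hence equal to $\langle(1,0)\rangle=\langle(1,1)\rangle$, a contradiction. So your ``resolution'' (that two maximal cyclic subgroups containing $H$ are forced to sit in a common cyclic $w$) is simply wrong, and the alternative you gesture at (``a component with no dominating vertex would have to be disconnected'') is not an argument.

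The fix is to go to the opposite extreme: pick $a_i$ a generator of $H_i$, i.e.\ an element of order~$p$. By Lemma~\ref{lema: p and p^i orderd path connd abelong< b>}, every $b\in C_{H_i}$ satisfies $\langle a_i\rangle=H_i\subseteq\langle b\rangle$, so $a_i\in\langle b\rangle$ and hence $a_i\sim b$. Thus $a_i$ dominates $C_{H_i}$. This is exactly what the paper does. For $G=G_1\times\mathbb Z_n$ the paper then checks that $(a_i,e')$ dominates $C_i\times\mathbb Z_n$ directly: for $(x,y)$ in that component, $(x,y)^{n}=(x^n,e')$, and since $\gcd(n,p)=1$ one has $a_i\in\langle x\rangle=\langle x^n\rangle$, whence $(a_i,e')$ is a power of $(x,y)$. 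Your proposed $(g,z_0)$ with $z_0$ a generator of $\mathbb Z_n$ would also work once $g$ is replaced by $a_i$, but not with your original maximal-order $g$.
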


We now shift our attention to the case when $\mathcal{G}_E^{**}(G)$ is connected and here we have the following result. 

\begin{theorem}\label{domination number of enhced pwr raph for connected_nilpotent}	
 Let $G_1$ be a product of non-cyclic  $p$-groups, that is, of the following form: \[G_1 = P_1 \times P_2 \times \dots \times  P_m\] where $m \geq 2$ and for each $ i \in [m]$, $P_i$ is a  $p_i$-group which is neither cyclic nor generalized quarternion. Let $s_i$ be the number of distinct $p_i$-order subgroups of $G_1.$ Then,
	\begin{equation}
	\label{eqn:dominating_connected} \gamma(\mathcal{G}_E^{**}(G_1)) = 
	\min _{ 1 \leq i \leq m}    s_i.
	\end{equation}
	Let $G = G_1 \times \mathbb{Z}_n$ with $\text{gcd}(n,|G_1|)=1$, then also 
	\begin{equation}
	\label{eqn:dominating_connected_2}
	\gamma(\mathcal{G}_E^{**}(G)) = 
	\displaystyle  \min _{ 1 \leq i \leq m}    s_i.
	\end{equation}
\end{theorem}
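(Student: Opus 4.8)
The plan is to prove the two displayed identities in turn, deriving \eqref{eqn:dominating_connected_2} from \eqref{eqn:dominating_connected} by a lexicographic‑product reduction. Two elementary facts will be used throughout. First, for a nilpotent group $N=Q_1\times\cdots\times Q_k$ written as the direct product of its Sylow subgroups and elements $x=(x_1,\dots,x_k)$, $y=(y_1,\dots,y_k)$, the subgroup $\langle x,y\rangle$ is cyclic if and only if $\langle x_j,y_j\rangle$ is cyclic in $Q_j$ for every $j$; indeed coprimality forces $\langle x,y\rangle=\langle x_1,y_1\rangle\times\cdots\times\langle x_k,y_k\rangle$, and a product of cyclic groups of pairwise coprime orders is cyclic. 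Second, if $u\neq e$ lies in a $p$-group then $\langle u\rangle$ is a cyclic $p$-group and so has a unique subgroup of order $p$, which I denote $\Omega(u)$; and if $\langle u,v\rangle$ is cyclic with $u,v$ both of nontrivial $p$-power order, then $\Omega(u)=\Omega(v)$. Since the Sylow subgroups of $G_1=P_1\times\cdots\times P_m$ are precisely $P_1,\dots,P_m$ and none is cyclic or generalized quaternion, Theorem \ref{Thm: G' nilpotent dominatability iff} gives $\mathrm{Dom}(\mathcal{G}_E(G_1))=\{e\}$, so $\mathcal{G}_E^{**}(G_1)$ is the induced subgraph of $\mathcal{G}_E(G_1)$ on $G_1\setminus\{e\}$, which by Theorem \ref{Thm:no sylow subgroups quaternion, ** connected iff} is connected (and has more than one vertex, hence no isolated vertex).

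For the upper bound, let $j$ attain $\min_i s_i$, enumerate the order‑$p_j$ subgroups of $P_j$ as $H_1,\dots,H_{s_j}$, choose a generator $z_k$ of $H_k$, and let $\tilde z_k\in G_1$ have $z_k$ in coordinate $j$ and $e$ elsewhere. For an arbitrary vertex $x=(x_1,\dots,x_m)$, in every coordinate $i\neq j$ we have $\langle(\tilde z_k)_i,x_i\rangle=\langle x_i\rangle$ cyclic, while in coordinate $j$ we take $k$ with $\langle z_k\rangle=\Omega(x_j)$ when $x_j\neq e$ (any $k$ if $x_j=e$), so that $\langle z_k,x_j\rangle=\langle x_j\rangle$ is cyclic; thus $\tilde z_k$ dominates $x$. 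Hence $\{\tilde z_1,\dots,\tilde z_{s_j}\}$ is a dominating set and $\gamma(\mathcal{G}_E^{**}(G_1))\le\min_i s_i$.

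The lower bound is the crux, and I would prove it by counting. Call a tuple $\tau=(H_1,\dots,H_m)$ with $H_i\le P_i$ of order $p_i$ a \emph{type}; there are $\prod_i s_i$ types, and each $\tau$ has a \emph{witness} $w=(w_1,\dots,w_m)$ with $\mathrm{o}(w_i)=p_i$ and $\Omega(w_i)=H_i$ for all $i$ (take a generator of each $H_i$; note $w\neq e$, so $w$ is a vertex). Let $S$ be a dominating set of $\mathcal{G}_E^{**}(G_1)$; then every witness is in $S$ or adjacent to some element of $S$. Fix $s\in S$ and let $T=\{i:s_i\neq e\}\neq\emptyset$. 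If a witness $w$ equals $s$ then $s$ has full support $T=[m]$ and $w$ is the unique witness of $s$'s own type; if instead $w$ is adjacent to $s$, then for each $i\in T$ the subgroup $\langle s_i,w_i\rangle$ is cyclic with $s_i,w_i\neq e$, so $\Omega(w_i)=\Omega(s_i)$, forcing the type of $w$ to agree with that of $s$ on the coordinates in $T$. In either case the number of types $\tau$ for which $s$ is adjacent to or equal to a witness of $\tau$ is at most $\prod_{i\notin T}s_i=\prod_i s_i\big/\prod_{i\in T}s_i\le\prod_i s_i\big/\min_i s_i$, using $\prod_{i\in T}s_i\ge s_{i_0}\ge\min_i s_i$ for any $i_0\in T$. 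Since every type must be accounted for by some $s\in S$, summing gives $\prod_i s_i\le|S|\cdot\prod_i s_i\big/\min_i s_i$, i.e. $|S|\ge\min_i s_i$. Together with the previous paragraph this proves \eqref{eqn:dominating_connected}.

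For \eqref{eqn:dominating_connected_2}, write $G=G_1\times\mathbb{Z}_n$ with $\gcd(n,|G_1|)=1$. By Theorem \ref{Thm: Dom set for nilpotent grp} one has $\mathrm{Dom}(\mathcal{G}_E(G))=\{e\}\times\mathbb{Z}_n$, so $\mathcal{G}_E^{**}(G)$ has vertex set $(G_1\setminus\{e\})\times\mathbb{Z}_n$. Using the coprime splitting once more, $(g,a)$ and $(h,b)$ are adjacent in $\mathcal{G}_E^{**}(G)$ precisely when $\langle g,h\rangle$ is cyclic in $G_1$ and $(g,a)\neq(h,b)$, since the $\mathbb{Z}_n$‑component $\langle a,b\rangle$ is automatically cyclic; comparing with the definition of the lexicographic product shows $\mathcal{G}_E^{**}(G)\cong\mathcal{G}_E^{**}(G_1)[K_n]$, the graph obtained by replacing each vertex of $\mathcal{G}_E^{**}(G_1)$ by a clique $K_n$ and joining two cliques completely whenever the corresponding vertices are adjacent. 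It is standard — and verified by projecting a dominating set of $\Gamma[K_n]$ down to $\Gamma$ for one inequality and lifting a dominating set of $\Gamma$ for the other — that $\gamma(\Gamma[K_n])=\gamma(\Gamma)$ for every graph $\Gamma$ with no isolated vertex. Since $\mathcal{G}_E^{**}(G_1)$ has none, $\gamma(\mathcal{G}_E^{**}(G))=\gamma(\mathcal{G}_E^{**}(G_1))=\min_i s_i$. The main obstacle in the whole argument is the type‑counting bound of the third paragraph, and in particular the case where a member of the dominating set is itself a witness; that is exactly what the observation $T=[m]$ disposes of.
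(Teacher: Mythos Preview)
Your proof is correct, but it diverges from the paper's in two places worth noting. For the upper bound you do exactly what the paper does: lift a minimum dominating set of $\mathcal{G}_E^{**}(P_j)$ into $G_1$ by padding with identities. For the lower bound on $G_1$, however, the paper argues more directly: given a purported dominating set $D$ with $|D|<\min_i s_i$, it projects $D$ to each $P_i$, invokes Theorem~\ref{thm:domination number p-group times cyclic_nilpotent} to find in each $P_i$ an element $y_i$ not dominated by the projection, and then observes that $(y_1,\dots,y_m)$ cannot be dominated by any element of $D$ (any such element would have to be the identity in every coordinate). Your type-counting pigeonhole argument reaches the same conclusion but is more elaborate than necessary; on the other hand it is self-contained and does not appeal to the earlier theorem. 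For the passage to $G=G_1\times\mathbb{Z}_n$, the paper simply repeats both halves of the argument with an extra $\mathbb{Z}_n$-coordinate, whereas your identification $\mathcal{G}_E^{**}(G)\cong\mathcal{G}_E^{**}(G_1)[K_n]$ together with the general fact $\gamma(\Gamma[K_n])=\gamma(\Gamma)$ is cleaner and more conceptual. (Incidentally, that identity holds for all $\Gamma$, not just those without isolated vertices, so your hypothesis there is harmless but unnecessary.)
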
 	

Last but not the least, as an application of  
finding the dominating vertices of the enhanced power graph we mention a spectral theoretic connection and prove Theorem 
\ref{Thm: mul spec rad for nilpotent grp}.  
\section{Preliminaries}\label{Sec:Preliminaries} 
In this section, we first recall some earlier known  results on enhanced power graphs which we need throughout the paper.
In \cite{enhancedpwrgrapbb3} and \cite{bera-dey-sajal}, Bera et al. studied many interesting properties of enhanced power graphs of finite groups. In fact, they proved the following:
\begin{lemma}[Theorem 2.4, \cite{enhancedpwrgrapbb3}]\label{enhd coplte iff cyclic}
The enhanced power graph $\mathcal{G}_E(G)$ of the group $G$ is complete if and only if $G$ is cyclic.
\end{lemma}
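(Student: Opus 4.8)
The plan is to prove both implications directly from Definition~\ref{defn: enhcdpowr graph}, using only that $G$ is finite.

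First I would dispatch the ``if'' direction. If $G=\langle g\rangle$ is cyclic, then every element of $G$ is a power of $g$, so for any two distinct $u,v\in G$ the element $w=g$ witnesses $u\in\langle w\rangle$ and $v\in\langle w\rangle$; hence $u$ and $v$ are adjacent, and $\mathcal{G}_E(G)$ is complete.

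For the ``only if'' direction, suppose $\mathcal{G}_E(G)$ is complete and let $g$ be an element of maximal order in $G$. I claim $G=\langle g\rangle$. Indeed, suppose some $h\in G\setminus\langle g\rangle$ existed. Since $\mathcal{G}_E(G)$ is complete, $g$ and $h$ are adjacent, so there is $w\in G$ with $g,h\in\langle w\rangle$. Then $\langle w\rangle$ is a cyclic subgroup containing $g$, so $\text{o}(w)\ge\text{o}(g)$, and maximality of $\text{o}(g)$ forces $\text{o}(w)=\text{o}(g)$. As $\langle g\rangle\subseteq\langle w\rangle$ are finite sets of the same cardinality, $\langle g\rangle=\langle w\rangle$, so $h\in\langle g\rangle$ — a contradiction. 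Hence $G=\langle g\rangle$ is cyclic.

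The proof is genuinely short, so I do not expect a serious obstacle; the one point to watch is that finiteness is really used (without it the statement fails, e.g.\ $(\mathbb{Q},+)$ has a complete enhanced power graph but is not cyclic), and the maximal-order step is exactly where finiteness enters. An alternative route I would keep in mind is to observe that completeness forces every two-generated subgroup of $G$ to be cyclic and then invoke a structural description of such finite groups, but the direct argument above is cleaner and self-contained.
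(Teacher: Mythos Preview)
Your proof is correct. Note that the paper does not actually prove this lemma: it is quoted in the Preliminaries section as Theorem~2.4 of \cite{enhancedpwrgrapbb3}, so there is no in-paper argument to compare against. Your maximal-order argument is the standard short proof and would be entirely acceptable here; your remark about finiteness being essential is also accurate.
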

\begin{lemma}[Theorem 1.1, \cite{bera-dey-sajal}]\label{vc=1 for abln and non abln group}
	Let $G$ be a finite $p$-group such that $G$ is neither cyclic nor generalized quaternion group. Then $\kappa(\mathcal{G}_E(G))=1.$	
\end{lemma}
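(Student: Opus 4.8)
The plan is to bound $\kappa(\mathcal{G}_E(G))$ on both sides. The lower bound is immediate: the identity $e$ is a dominating vertex of $\mathcal{G}_E(G)$ (for every $x$ we have $e,x\in\langle x\rangle$), so $\mathcal{G}_E(G)$ is connected, and since $|G|\ge p\ge 2$ a connected graph has $\kappa\ge 1$. All the work is in the upper bound $\kappa(\mathcal{G}_E(G))\le 1$, which I would get by showing that deleting the single vertex $e$ disconnects the graph, i.e. that $\mathcal{G}_E(G)\setminus\{e\}$ is disconnected. (By Theorem~\ref{Thm: G' nilpotent dominatability iff}, $e$ is actually the unique dominating vertex in this case, so it is the natural vertex to remove; the argument below does not rely on that.)

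The engine is a component invariant. For a finite $p$-group and $g\neq e$, the cyclic group $\langle g\rangle$ has order a positive power of $p$, hence contains exactly one subgroup of order $p$; write $\omega(g)$ for it. I claim $\omega$ is constant on every connected component of $\mathcal{G}_E(G)\setminus\{e\}$. It is enough to see that adjacent vertices share the same value. If $u,v\in G\setminus\{e\}$ are adjacent, there is $w$ with $u,v\in\langle w\rangle$; the cyclic group $\langle w\rangle$ has a unique subgroup $P$ of order $p$, and $\omega(u)$ is a subgroup of order $p$ contained in $\langle u\rangle\le\langle w\rangle$, so $\omega(u)=P$, and likewise $\omega(v)=P$. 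Hence along any path in $\mathcal{G}_E(G)\setminus\{e\}$ the value of $\omega$ never changes, so two vertices with distinct $\omega$-values lie in distinct components.

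Now I produce such a pair. By the classical fact that a finite $p$-group with a unique subgroup of order $p$ is cyclic or, when $p=2$, generalized quaternion, our hypothesis forces $G$ to have at least two distinct subgroups of order $p$; pick generators $a,b$ of two of them, so $a,b\neq e$ have order $p$ and $\langle a\rangle\neq\langle b\rangle$. Then $\omega(a)=\langle a\rangle\neq\langle b\rangle=\omega(b)$, so $a$ and $b$ sit in different connected components of $\mathcal{G}_E(G)\setminus\{e\}$. Thus $\mathcal{G}_E(G)\setminus\{e\}$ is disconnected, $\kappa(\mathcal{G}_E(G))\le 1$, and with the lower bound we conclude $\kappa(\mathcal{G}_E(G))=1$.

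The proof is genuinely short; the only non-graph-theoretic input is the classical classification of $p$-groups with a unique order-$p$ subgroup, and the hypothesis of the lemma is exactly what rules those out. The one step that deserves care is the invariance of $\omega$ under adjacency: this works precisely because an edge of $\mathcal{G}_E(G)$ places its two endpoints inside a common \emph{cyclic} subgroup, which, being cyclic of $p$-power order, has only one subgroup of order $p$. (The same bookkeeping can be run on the intersection graph of the maximal cyclic subgroups of $G$ — if $M\cap M'\neq\{e\}$ then $M$ and $M'$ share their order-$p$ subgroup — and that refinement actually shows the components of $\mathcal{G}_E(G)\setminus\{e\}$ are indexed by the order-$p$ subgroups of $G$; but for the stated equality $\kappa=1$ the invariant $\omega$ is all that is needed.)
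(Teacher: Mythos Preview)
Your argument is correct. Note, however, that the paper does not supply its own proof of this lemma: it is quoted as Theorem~1.1 of \cite{bera-dey-sajal} in the Preliminaries section and is simply invoked later (e.g.\ in the proof of Theorem~\ref{thm: nilpotent no sylow subgroups cyclic quaternion vc=1 iff}). So there is no in-paper proof to compare against directly.

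That said, your invariant $\omega(g)$ (the unique order-$p$ subgroup of $\langle g\rangle$) is exactly the mechanism encoded in Lemma~\ref{lema: p and p^i orderd path connd abelong< b>}, which the paper also cites from \cite{bera-dey-sajal}: that lemma says that a path in $\mathcal{G}_E^*(G)$ from an order-$p$ element $a$ to any $b$ forces $\langle a\rangle\subseteq\langle b\rangle$, i.e.\ $\omega(b)=\langle a\rangle$. Your proof is thus the natural one and coincides with the approach of the cited source; the parenthetical remark at the end, that the components of $\mathcal{G}_E(G)\setminus\{e\}$ are indexed by the order-$p$ subgroups, is precisely the paper's Theorem~\ref{thm:components_of_G_abelian}. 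Nothing is missing.
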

\begin{lemma}[Lemma 2.5,  \cite{bera-dey-sajal}]
	\label{lemma:any_odtwo_commuting_elements_of_prime_order_adjacent}
	Let $G$ be a finite group and $x, y \in G \setminus
	\{e\}$ be such that $\text{gcd}(\text{o}(x), \text{o}(y)) = 1 $ and $ xy = yx.$ Then, $x \sim y$ in $\mathcal{G}_E^*(G)$. 
\end{lemma}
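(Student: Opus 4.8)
The plan is to produce an explicit common ``witness'' element $w \in G$ with $x, y \in \langle w \rangle$; by Definition \ref{defn: enhcdpowr graph} this is precisely what adjacency in $\mathcal{G}_E(G)$ means, and since $x, y \neq e$ the same edge survives in $\mathcal{G}_E^*(G)$. The natural candidate is $w = xy$. First I would record that $x \neq y$: if $x = y$, then $\text{o}(x) = \text{o}(y)$ divides $\text{gcd}(\text{o}(x),\text{o}(y)) = 1$, forcing $x = e$, contrary to hypothesis. So it suffices to place $x$ and $y$ inside one cyclic subgroup.

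Next, using $xy = yx$ together with $\text{gcd}(\text{o}(x),\text{o}(y)) = 1$, a standard argument gives $\text{o}(w) = \text{o}(x)\,\text{o}(y)$, so $\langle w \rangle$ is cyclic of order $\text{o}(x)\,\text{o}(y)$. The key computation is that, since $x$ and $y$ commute, $w^{\text{o}(y)} = x^{\text{o}(y)} y^{\text{o}(y)} = x^{\text{o}(y)}$, and because $\text{gcd}(\text{o}(x),\text{o}(y)) = 1$ the element $x^{\text{o}(y)}$ again has order $\text{o}(x)$, hence generates the same cyclic group as $x$; thus $x \in \langle x^{\text{o}(y)} \rangle = \langle w^{\text{o}(y)} \rangle \subseteq \langle w \rangle$. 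Symmetrically, $w^{\text{o}(x)} = y^{\text{o}(x)}$ has order $\text{o}(y)$, so $y \in \langle w \rangle$. Therefore $x, y \in \langle w \rangle$ with $w \in G$, which is exactly the required adjacency.

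I do not anticipate a genuine obstacle; the only points needing care are (i) justifying $\text{o}(xy) = \text{o}(x)\,\text{o}(y)$ from commutativity and coprimality — one can invoke the internal direct product $\langle x \rangle \langle y \rangle \cong \langle x \rangle \times \langle y \rangle$, or argue directly that $(xy)^k = e$ forces $\text{o}(x) \mid k$ and $\text{o}(y) \mid k$ — and (ii) checking that passing from $\mathcal{G}_E(G)$ to $\mathcal{G}_E^*(G)$ does not delete the edge, which holds since neither $x$ nor $y$ equals $e$. A slightly slicker variant is to note that two commuting elements of coprime orders in a finite group always generate a cyclic group (generated by their product), after which the adjacency is immediate. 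Either route gives a short, self-contained proof.
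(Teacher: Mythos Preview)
Your argument is correct: taking $w = xy$, using commutativity to compute $w^{\text{o}(y)} = x^{\text{o}(y)}$ and $w^{\text{o}(x)} = y^{\text{o}(x)}$, and invoking coprimality to recover $x$ and $y$ as powers of $w$ is the standard route, and your care about $x \neq y$ and about the passage to $\mathcal{G}_E^*(G)$ is well placed.

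Note, however, that the paper does not supply its own proof of this lemma; it is quoted in the Preliminaries section as Lemma~2.5 of \cite{bera-dey-sajal} and used as a black box. So there is no in-paper argument to compare against. Your write-up is essentially the natural proof one would expect in the cited source.
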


\begin{lemma}[Lemma 2.6,  \cite{bera-dey-sajal}]
	\label{lema: p and p^i orderd path connd abelong< b>}
	Let $G$ be a $p$-group. Let $a, b$ be two elements of $G$ of order $p, p^i (i\geq 1)$ respectively. If there is a path between $a$ and $b$ in $\mathcal{G}_E^*(G),$ then $\langle a\rangle \subseteq \langle b\rangle.$	In particular, if both a and b have order p, then, $\langle a \rangle = \langle b \rangle.$
\end{lemma}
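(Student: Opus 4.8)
The plan is to extract a single-edge statement and then push it along the path, relying only on the elementary fact that a finite cyclic $p$-group has a unique subgroup of order $p$, and that this subgroup is contained in every nontrivial subgroup of it (it is the unique minimal subgroup).

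First I would establish the edge version: if $x\sim y$ in $\mathcal{G}_E^*(G)$ and $\text{o}(x)=p$, then $\langle x\rangle\subseteq\langle y\rangle$. By definition of the enhanced power graph there is $w\in G$ with $x,y\in\langle w\rangle$. Since $G$ is a $p$-group, $\langle w\rangle$ is cyclic of order $p^m$ for some $m\geq 1$, and so it has a unique subgroup $H$ of order $p$. As $\langle x\rangle$ is a subgroup of $\langle w\rangle$ of order $p$, we get $\langle x\rangle=H$; and since $y\neq e$, the cyclic subgroup $\langle y\rangle$ is a nontrivial subgroup of $\langle w\rangle$ and hence contains $H=\langle x\rangle$.

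Next I would treat a path $a=v_0\sim v_1\sim\cdots\sim v_k=b$ in $\mathcal{G}_E^*(G)$, every vertex of which is a non-identity element. I claim $\langle a\rangle\subseteq\langle v_j\rangle$ for all $j$, by induction on $j$; the case $j=0$ is trivial. For the inductive step, pick $w$ with $v_j,v_{j+1}\in\langle w\rangle$; then $\langle w\rangle$ is a cyclic $p$-group, with unique subgroup $H$ of order $p$. By the inductive hypothesis $\langle a\rangle\subseteq\langle v_j\rangle\subseteq\langle w\rangle$, and since $|\langle a\rangle|=p$ this forces $\langle a\rangle=H$. Because $v_{j+1}\neq e$, the subgroup $\langle v_{j+1}\rangle$ of $\langle w\rangle$ is nontrivial and therefore contains $H=\langle a\rangle$, completing the step. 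Taking $j=k$ gives $\langle a\rangle\subseteq\langle b\rangle$; if moreover $\text{o}(b)=p$, then $\langle a\rangle$ and $\langle b\rangle$ both have order $p$, so they coincide, which is the \emph{in particular} clause.

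I do not anticipate a genuine obstacle: the hypothesis $\text{o}(b)=p^i$ with $i\geq 1$ is used only through $b\neq e$, and the one point that must be stated with a little care is the subgroup structure of a cyclic $p$-group — a unique minimal subgroup, contained in every nontrivial subgroup — after which the argument is just bookkeeping along the path and needs no input from the earlier lemmas.
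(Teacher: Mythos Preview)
Your proof is correct. The paper does not supply its own proof of this lemma (it is quoted from \cite{bera-dey-sajal}), but exactly the inductive ``push $\langle a\rangle$ along the path using the unique order-$p$ subgroup of each ambient cyclic $p$-group'' argument you give is reproduced in the paper inside the proof of Theorem~\ref{iff thm:vc=n for G times cyclic group}, so your approach matches what the authors use.
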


\begin{lemma}[Theorem 3.1, \cite{bera-dey-sajal}]
	\label{thm:components_of_G_abelian_earlier}
	Let G be a finite abelian $p$-group. Suppose that
	$$G = Z_{p^{t_1}}
	\times  Z_{p^{t_2}}
	\times  \cdots \times 
	Z_{p^{t_r}}.$$ where $r \geq 2$ and $1 \leq t_1 \leq t_2 \leq \cdots \leq t_r.$
	Then, the number of components of $\mathcal{G}_{E}^{**} (G )$  
	is $\frac{p^r-1}{p-1}.$ 
\end{lemma}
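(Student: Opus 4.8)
The plan is to set up a bijection between the connected components of $\mathcal{G}_E^{**}(G)$ and the subgroups of $G$ of order $p$, and then to count the latter, which reduces to an elementary count of $1$-dimensional subspaces of $\mathbb{F}_p^{\,r}$.

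First I would reduce to the ``punctured'' graph. Since $r\ge 2$, the group $G$ is non-cyclic, and being abelian it is not generalized quaternion; hence by Theorem~\ref{Thm: G' nilpotent dominatability iff}, $\text{Dom}(\mathcal{G}_E(G))=\{e\}$, so $\mathcal{G}_E^{**}(G)=\mathcal{G}_E^{*}(G)$ is the enhanced power graph with only the identity deleted. (A self-contained argument is also short: for $x\ne e$ of order $p^{j}$, the subgroup $\Omega_1(G)=\{g\in G:g^{p}=e\}\cong\mathbb{Z}_p^{\,r}$ has rank $\ge 2$, so some order-$p$ element $y$ satisfies $\langle y\rangle\ne\langle x^{p^{j-1}}\rangle$; were $\langle x,y\rangle$ cyclic it would have a unique subgroup of order $p$, forcing $\langle y\rangle=\langle x^{p^{j-1}}\rangle$, a contradiction, so $x$ is not a dominating vertex.)

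Next I would build the correspondence. The key observation is that every vertex $b$ of $\mathcal{G}_E^{*}(G)$ is adjacent to the order-$p$ element $b^{\,\text{o}(b)/p}$, since both lie in the cyclic group $\langle b\rangle$; hence every component of $\mathcal{G}_E^{**}(G)$ contains a vertex of order $p$. I would then define $\Phi\colon C(\mathcal{G}_E^{**}(G))\longrightarrow\{H\le G:|H|=p\}$ by $\Phi(\mathcal{C})=\langle a\rangle$ for any order-$p$ vertex $a\in\mathcal{C}$. This is well defined by Lemma~\ref{lema: p and p^i orderd path connd abelong< b>}: two order-$p$ vertices in a common component are joined by a path in $\mathcal{G}_E^{*}(G)$, and the ``in particular'' clause of that lemma forces them to generate the same subgroup. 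For surjectivity, the $p-1$ non-identity elements of a given order-$p$ subgroup $H$ are pairwise adjacent (all lie in the cyclic group $H$), so they sit in one component $\mathcal{C}_H$, and $\Phi(\mathcal{C}_H)=H$. For injectivity, if $\Phi(\mathcal{C}_1)=\Phi(\mathcal{C}_2)=H$, each $\mathcal{C}_i$ contains a generator of $H$, and any two elements of $H$ are adjacent, so $\mathcal{C}_1=\mathcal{C}_2$.

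Finally I would count: every subgroup of order $p$ lies inside $\Omega_1(G)\cong\mathbb{Z}_p^{\,r}$, and these subgroups are precisely the $1$-dimensional $\mathbb{F}_p$-subspaces of this $r$-dimensional space, of which there are $\frac{p^{r}-1}{p-1}$; hence $\mathcal{G}_E^{**}(G)$ has $\frac{p^{r}-1}{p-1}$ components. The whole argument rests on Lemma~\ref{lema: p and p^i orderd path connd abelong< b>}, which is exactly what lets each component ``remember'' a single order-$p$ subgroup; were that lemma not available, the main obstacle would be to show that a path built from ``common cyclic subgroup'' steps cannot pass between two distinct order-$p$ subgroups of an abelian $p$-group. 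With the lemma in hand, the remainder is bookkeeping.
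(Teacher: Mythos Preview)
Your argument is correct. Note that the paper does not actually prove this lemma---it is quoted from \cite{bera-dey-sajal} in the Preliminaries section---but the paper does prove the more general Theorem~\ref{thm:components_of_G_abelian} (for arbitrary finite $p$-groups that are neither cyclic nor generalized quaternion), and your approach coincides with that proof: both use Lemma~\ref{lema: p and p^i orderd path connd abelong< b>} to show that distinct order-$p$ subgroups lie in distinct components, and the observation that every non-identity element is adjacent to an order-$p$ power of itself to show that every component contains such a subgroup. The only cosmetic difference is that you package the correspondence as an explicit bijection $\Phi$ with well-definedness, injectivity, and surjectivity checked separately, whereas the paper phrases it as matching upper and lower bounds on the number of components; the content is identical, and the final count $\frac{p^r-1}{p-1}$ of order-$p$ subgroups via $\Omega_1(G)\cong\mathbb{F}_p^{\,r}$ is the same in both.
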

\begin{lemma}[Theorem 3.3, \cite{enhancedpwrgrapbb3}]\label{dom of gen Q_2^n in enhced}
Let $G$ be a non-abelian $2$-group. Then the enhanced power graph $\mathcal{G}_e(G)$ is dominatable  if and only if $G$ is generalized quarternion group. In this case identity and the unique element of order $2$ are dominating vertices.
\end{lemma}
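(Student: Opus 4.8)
The plan is to establish the two implications separately; the backward direction is a short direct verification, while the forward direction rests on a structural observation about dominating vertices in $p$-groups.

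For the ``if'' direction I would assume $G\cong Q_{2^k}$ and let $z$ be its unique element of order $2$ (equivalently, its unique central involution). Since $G$ is a $2$-group, every $x\in G\setminus\{e\}$ has order a power of $2$, so $\langle x\rangle$ is a nontrivial cyclic $2$-group and contains exactly one subgroup of order $2$; as $z$ is the only involution of $G$, this forces $z\in\langle x\rangle$. Taking $w=x$ then witnesses $x\sim z$ in $\mathcal{G}_E(G)$, and of course $e\sim z$, so $z$ and $e$ are both dominating vertices and $\mathcal{G}_E(G)$ is dominatable. One can, if desired, also check that $\text{Dom}(\mathcal{G}_E(Q_{2^k}))=\{e,z\}$, since generators of two distinct maximal cyclic subgroups are never adjacent; but the statement only asks for the two exhibited vertices.

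For the ``only if'' direction I would start from a dominating vertex $g\neq e$ and prove the key claim that $g$ lies in \emph{every} maximal cyclic subgroup of $G$: if $M=\langle x\rangle$ is maximal cyclic then either $g\in M$, or $g\sim x$ supplies a $w$ with $g,x\in\langle w\rangle$, and then $\langle x\rangle\subseteq\langle w\rangle$ together with the maximality of $M$ forces $\langle w\rangle=M$, so $g\in M$. Next, $\langle g\rangle$ is a nontrivial cyclic $2$-group, hence contains a unique involution $z$. If $t\in G$ has order $2$, then $\langle t\rangle$ lies in some maximal cyclic subgroup $M$, which by the claim also contains $z$; but $M$ is a cyclic $2$-group with a unique involution, so $t=z$. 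Thus $G$ has a unique subgroup of order $2$, and the classical classification of finite $2$-groups with a unique involution gives that $G$ is cyclic or generalized quaternion; as $G$ is non-abelian, $G\cong Q_{2^k}$.

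I expect the only delicate point to be the key claim that a dominating vertex must sit inside every maximal cyclic subgroup, together with the clean extraction of the unique involution $z$ from $\langle g\rangle$; once these are in place, the argument closes by quoting the standard group-theoretic classification and the short computation of the ``if'' direction.
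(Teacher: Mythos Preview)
Your argument is correct. Note, however, that the paper does not supply its own proof of this lemma: it is quoted in the Preliminaries section as Theorem~3.3 of \cite{enhancedpwrgrapbb3} and used as a black box. So there is no ``paper's proof'' to compare against here.

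That said, your proposed proof is self-contained and clean. The backward direction is immediate from the fact that $Q_{2^k}$ has a unique involution $z$, which therefore lies in every nontrivial cyclic subgroup. For the forward direction, your key claim---that a non-identity dominating vertex $g$ lies in every maximal cyclic subgroup $M=\langle x\rangle$---is exactly right: adjacency $g\sim x$ yields a cyclic $\langle w\rangle$ containing both, and maximality of $M$ forces $\langle w\rangle=M$. The passage from ``$g$ lies in every maximal cyclic subgroup'' to ``$G$ has a unique involution'' is also correct, since the involution $z\in\langle g\rangle$ then sits inside every maximal cyclic subgroup, and a cyclic $2$-group has only one involution. The final appeal to the classical classification of finite $2$-groups with a unique subgroup of order $2$ (cyclic or generalized quaternion) is standard and appropriate. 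The only cosmetic point is that in the ``if'' direction you might also want to record that no element of order $\geq 4$ can be dominating (two distinct maximal cyclic subgroups of $Q_{2^k}$ intersect only in $\langle z\rangle$), which pins down $\mathrm{Dom}(\mathcal{G}_E(Q_{2^k}))=\{e,z\}$ exactly; but as you note, the lemma as stated only asks that $e$ and $z$ are dominating.
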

We next prove some important results which are used to prove our main theorems.
\begin{theorem}\label{thm: dom, G any grp product cyclic grp}
Let $G$ be a finite group and $n\in \mathbb{N}.$ If $gcd(|G|, n)=1$, then $\{(e, a): a\in \mathbb{Z}_n\}\subseteq\text{Dom}(\mathcal{G}_E({G\times \mathbb{Z}_n})).$
\end{theorem}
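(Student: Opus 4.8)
The plan is to show directly that for any fixed $a \in \mathbb{Z}_n$, the vertex $(e,a)$ is adjacent to every other vertex of $\mathcal{G}_E(G \times \mathbb{Z}_n)$. So pick an arbitrary vertex $(g,b) \in G \times \mathbb{Z}_n$ with $(g,b) \neq (e,a)$. I need to produce a witness element $w \in G \times \mathbb{Z}_n$ such that both $(e,a)$ and $(g,b)$ lie in $\langle w \rangle$. The natural candidate is to build $w$ componentwise: since $G \times \mathbb{Z}_n$ is a direct product with coprime factor orders, cyclic subgroups also split as direct products. Concretely, I would take $w = (g, c)$ where $c$ is chosen so that $\langle c \rangle$ contains both $a$ and $b$ in $\mathbb{Z}_n$ — and since $\mathbb{Z}_n$ is cyclic, $\langle c \rangle = \mathbb{Z}_n$ for $c$ a generator works, so one may simply take $c = 1$ (a generator of $\mathbb{Z}_n$), giving $w = (g,1)$.

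The key computation is then to check $\langle (g,1) \rangle \supseteq \{(e,a),(g,b)\}$. Because $\gcd(|G|, n) = 1$, we have $\mathrm{o}(g,1) = \mathrm{lcm}(\mathrm{o}(g), n) = \mathrm{o}(g)\cdot n$, and by the Chinese Remainder Theorem the cyclic group $\langle (g,1)\rangle$ is isomorphic to $\langle g \rangle \times \mathbb{Z}_n$; more precisely, since $\gcd(\mathrm{o}(g), n) = 1$ there exist integers realizing $(g,1)^{k} = (g^k \bmod \mathrm{o}(g), k \bmod n)$ hitting any prescribed pair. In particular: choosing $k \equiv 0 \pmod{\mathrm{o}(g)}$ and $k \equiv a \pmod n$ gives $(g,1)^k = (e,a)$; choosing $k \equiv 1 \pmod{\mathrm{o}(g)}$ and $k \equiv b \pmod n$ gives $(g,1)^k = (g,b)$. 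Both systems are solvable by CRT since the moduli are coprime. Hence $w = (g,1)$ is a common "power-witness," so $(e,a) \sim (g,b)$.

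Since $(g,b)$ was an arbitrary vertex distinct from $(e,a)$, this shows $(e,a)$ is a dominating vertex, i.e. $(e,a) \in \mathrm{Dom}(\mathcal{G}_E(G \times \mathbb{Z}_n))$. As $a \in \mathbb{Z}_n$ was arbitrary, $\{(e,a) : a \in \mathbb{Z}_n\} \subseteq \mathrm{Dom}(\mathcal{G}_E(G \times \mathbb{Z}_n))$, which is the claim. I do not anticipate a serious obstacle here; the only point requiring a little care is the CRT bookkeeping to confirm that $\langle (g,1)\rangle$ genuinely contains the full "slice" $\{e\} \times \mathbb{Z}_n$ together with $(g,b)$, and this is exactly where the coprimality hypothesis $\gcd(|G|,n) = 1$ is used. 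One could also phrase it more slickly: $\langle (g,1)\rangle = \langle (g,0)\rangle \times \langle(e,1)\rangle = \langle g\rangle \times \mathbb{Z}_n$ as an internal direct product (valid precisely because the orders are coprime), from which both containments are immediate without explicit exponents.
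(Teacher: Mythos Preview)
Your proof is correct and uses the same underlying witness as the paper: for a given $(g,b)$ you take $w=(g,c)$ with $c$ a generator of $\mathbb{Z}_n$, exactly as the paper does in its Case~3 (and implicitly in Case~1). The difference is purely in packaging: the paper splits into three cases ($g=e$; $g\neq e,\,b=e'$; $g\neq e,\,b\neq e'$) and verifies the needed containments by explicit exponent chases invoking Euler's theorem, whereas your single appeal to CRT (equivalently, the internal direct product decomposition $\langle(g,1)\rangle=\langle g\rangle\times\mathbb{Z}_n$ valid since $\gcd(\mathrm{o}(g),n)=1$) handles all cases uniformly and is cleaner.
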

\begin{proof}
We show that $(e, a)$ is a dominating vertex, where $e$ is the identity element of the group $G$ and $a$ is an arbitrary element of the group $\mathbb{Z}_n.$ Consider an arbitrary vertex $(g, b)$  of the graph  $\mathcal{G}_E({G\times \mathbb{Z}_n}).$
	
Case 1: Let $g=e.$ Suppose $a'$ is a generator of the cyclic group $\mathbb{Z}_n.$ Now, $a, b\in \mathbb{Z}_n$ and $a'$ is a generator of $\mathbb{Z}_n$ implies that $(e, b), (e, a)\in \langle(e, a')\rangle.$ As a result, $(e, b)\sim (e, a)$ in $\mathcal{G}_E({G\times \mathbb{Z}_n}).$
	
Case 2: Let $g\neq e$ and $b=e',$ where $e'$ is the identity of the group $\mathbb{Z}_n.$ We show that $(g, e'), (e, a)\in \langle(g, a)\rangle.$ Let $\text{o}(g)=m.$ Then $(e, a^m)=(g, a)^m\in \langle(g, a)\rangle.$ Now $\text{gcd}(|G|, n)=1$ implies that $\text{gcd}(m, n)=1.$ So $(e, a)^m=(e, a^m)$ is a generator of the cyclic group $\langle(e, a)\rangle.$  Therefore, $(e, a)\in \langle(e, a^m)\rangle.$ Again $(e, a^m)\in \langle(g, a)\rangle$ implies that $(e, a)\in \langle(g, a)\rangle.$ Here we show that $(g, e')\in \langle(g, a)\rangle.$ Now $\text{gcd}(m, n)=1$ implies that $n^{\phi(m)}=mk+1, k\in \mathbb{N} (\text{ by the Euler's theorem }).$ Therefore, $(g, a)^{n^{\phi(m)}}=(g^{n^{\phi(m)}}, e')=(g^{mk+1}, e')=(g, e').$ Hence, $(g, e')\in \langle (g, a) \rangle.$
	
Case 3: Let $g\neq e $ and $b\neq e'.$ We show that $(g, b), (e, a)\in \langle(g, a')\rangle.$ Already we have proved that $(g, e')\in \langle(g, a)\rangle$ and $(e, a)\in \langle(g, a)\rangle.$ Now $a'$ is a generator of $\mathbb{Z}_n$ implies $(e, b)\in \langle(e, a')\rangle \subseteq \langle(g, a')\rangle$ (in fact, $\text{gcd}(m, n)=1\Rightarrow k_1m+k_2n=1.$ Therefore, $(g, a')^{k_1m}=(e, a')).$ Hence $ (g, b)=(g, e')(e, b)\in \langle(g, a')\rangle.$ This completes the proof.
\end{proof}
\begin{proposition}\label{prop: dom for product of groups}
Let $G_1, \cdots, G_r$ be finite groups. Then $\text{Dom}(\mathcal{G}_E(G_1\times\cdots\times G_r))\subseteq\text{Dom}(\mathcal{G}_E(G_1))\times \cdots\times \text{Dom}(\mathcal{G}_E(G_r)).$	
\end{proposition}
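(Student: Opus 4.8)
The plan is to prove the contrapositive-style containment directly by testing a candidate dominating vertex of the direct product coordinate by coordinate. Suppose $(x_1,\dots,x_r)\in\text{Dom}(\mathcal{G}_E(G_1\times\cdots\times G_r))$; I want to show $x_i\in\text{Dom}(\mathcal{G}_E(G_i))$ for each fixed $i\in[r]$. Fix such an $i$ and let $y_i\in G_i$ be an arbitrary element; the goal is to produce an element of $G_i$ whose cyclic subgroup contains both $x_i$ and $y_i$, i.e.\ to show $x_i\sim y_i$ in $\mathcal{G}_E(G_i)$ (or $x_i=y_i$).

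First I would embed the relevant data into the product. Consider the vertex $v=(e_1,\dots,e_{i-1},y_i,e_{i+1},\dots,e_r)$, where $e_j$ denotes the identity of $G_j$. Since $(x_1,\dots,x_r)$ is a dominating vertex of $\mathcal{G}_E(G_1\times\cdots\times G_r)$, either $v=(x_1,\dots,x_r)$ — in which case $x_i=y_i$ and we are done — or $v\sim(x_1,\dots,x_r)$, meaning there is an element $w=(w_1,\dots,w_r)\in G_1\times\cdots\times G_r$ with both $v\in\langle w\rangle$ and $(x_1,\dots,x_r)\in\langle w\rangle$. The key observation is that $\langle w\rangle=\langle w_1\rangle\times\cdots\times\langle w_r\rangle$ is itself a direct product of cyclic subgroups, so projecting onto the $i$-th coordinate is well behaved: membership $v\in\langle w\rangle$ forces $y_i\in\langle w_i\rangle$, and membership $(x_1,\dots,x_r)\in\langle w\rangle$ forces $x_i\in\langle w_i\rangle$.

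From here the conclusion is immediate: $w_i$ is an element of $G_i$ with $x_i,y_i\in\langle w_i\rangle$, hence $x_i$ and $y_i$ are equal or adjacent in $\mathcal{G}_E(G_i)$. Since $y_i$ was arbitrary, $x_i\in\text{Dom}(\mathcal{G}_E(G_i))$. As this holds for every $i$, we get $(x_1,\dots,x_r)\in\text{Dom}(\mathcal{G}_E(G_1))\times\cdots\times\text{Dom}(\mathcal{G}_E(G_r))$, which is the desired containment.

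The argument is essentially routine; the only point that needs a little care is the projection step, namely that a cyclic subgroup of a direct product decomposes as the product of its coordinate projections and that these projections are again cyclic (so that ``being a power of $w$'' passes to ``being a power of $w_i$'' in each coordinate). This is standard, but I would state it explicitly to keep the proof self-contained. No deep obstacle is expected here — the proposition is a structural observation whose main role is to reduce the characterization of dominating vertices of nilpotent groups to the prime-power case handled elsewhere in the paper.
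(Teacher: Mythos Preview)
Your argument is correct and follows essentially the same route as the paper's proof: take a dominating vertex of the product, test it against an arbitrary element, obtain a common cyclic parent $w=(w_1,\dots,w_r)$, and project to the $i$-th coordinate to conclude $x_i,y_i\in\langle w_i\rangle$. The paper uses an arbitrary $(b_1,\dots,b_r)$ rather than your specific embedding $(e_1,\dots,y_i,\dots,e_r)$, but this is an immaterial difference.

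One small slip worth tightening: you write $\langle w\rangle=\langle w_1\rangle\times\cdots\times\langle w_r\rangle$, but in general only the inclusion $\langle w\rangle\subseteq\langle w_1\rangle\times\cdots\times\langle w_r\rangle$ holds (take $w=(1,1)\in\mathbb{Z}_2\times\mathbb{Z}_2$). Fortunately only this inclusion is needed for your projection step, so the argument stands; just replace ``$=$'' by ``$\subseteq$'' when you write it up.
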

\begin{proof}
Let $(a_1, \cdots, a_r)$ be a dominating vertex of the graph $\mathcal{G}_E(G_1\times\cdots\times G_r).$ Let $(b_1, \cdots, b_r)$ be an arbitrary vertex of  $\mathcal{G}_E(G_1\times\cdots\times G_r).$ Now $(a_1, \cdots, a_r)$ is a dominating vertex, so there exists $(c_1, \cdots, c_r)\in G_1\times\cdots\times G_r$ such that $(a_1, \cdots, a_r), (b_1, \cdots, b_r)\in \langle (c_1, \cdots, c_r)\rangle.$ Therefore, for each $i\in[r],$ we have $a_i, b_i\in \langle c_i\rangle.$ As a result, $a_i\sim b_i.$  As $b_i$ is arbitrary, so $a_i$ is a dominating vertex of $\mathcal{G}_E(G_i).$  Hence, $(a_1, \cdots, a_r)\in \text{Dom}(\mathcal{G}_E(G_1))\times \cdots\times \text{Dom}(\mathcal{G}_E(G_r)).$ 	
\end{proof}
\begin{rem}
The equality of Proposition \ref{prop: dom for product of groups} is not strict, in general. One can take $G_1 = G_2 = \mathbb{Z}_p$ and in that case, $Dom(\mathcal{G}_E{(G_1 \times G_2)}) =\{ (e,e)\}$, where as $Dom(\mathcal{G}_E(G_1)) \times 
Dom(\mathcal{G}_E(G_2)) = \mathbb{Z}_p \times \mathbb{Z}_p.$	
\end{rem}
Now we recall the following result which is already used  to prove Theorem 2.3 of \cite{bera-Linegraph-powergraph}.  
\begin{lemma}\label{lemma: no of distinct cyclic subgroups orer p geq 3}
Let $G$ be a $p$-group. Then the number of distinct $p$-ordered cyclic subgroup is either $1$ or greater than equal to $3.$ 	
\end{lemma}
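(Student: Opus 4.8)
The plan is to reduce to counting elements of order $p$ and use the fact that each subgroup of order $p$ contains exactly $p-1$ such elements, together with a standard fact from group theory on the number of solutions to $x^p = e$. Concretely, let $N$ be the number of distinct subgroups of order $p$ in the $p$-group $G$. Every such subgroup is cyclic (a group of prime order is cyclic), and distinct subgroups of order $p$ intersect trivially, so the set of elements of order $p$ in $G$ is partitioned into $N$ blocks of size $p-1$; hence the number of elements of order $p$ equals $N(p-1)$. On the other hand, the number of elements $x \in G$ with $x^p = e$ (including the identity) is $N(p-1) + 1$.

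The key input is Frobenius's theorem (or, for the abelian case, the structure theorem; but Frobenius covers all $p$-groups): in a finite group, the number of solutions of $x^p = e$ is divisible by $\gcd(p, |G|)$, which here is $p$ since $G$ is a $p$-group. Thus $N(p-1) + 1 \equiv 0 \pmod p$, i.e. $N(p-1) \equiv -1 \equiv p-1 \pmod p$. Since $\gcd(p-1, p) = 1$, this forces $N \equiv 1 \pmod p$. Therefore $N \in \{1, p+1, 2p+1, \dots\}$. In particular, if $N \neq 1$ then $N \geq p+1 \geq 3$ (as $p \geq 2$), which is exactly the claim; and the possibility $N = 2$ is ruled out because $2 \not\equiv 1 \pmod p$ for any prime $p$.

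The only subtlety — and the step I would be most careful about — is invoking Frobenius's congruence in the correct form: the theorem states that if $d \mid |G|$ then the number of solutions of $x^d = e$ in $G$ is a multiple of $d$. Applying it with $d = p$ (which divides $|G|$ since $G$ is a nontrivial $p$-group; the statement is trivial when $|G| = 1$ as then $N = 0$, though one typically assumes $G$ is nontrivial here so that $p$-order subgroups can exist) gives exactly what is needed. An alternative, self-contained route avoiding Frobenius: if $G$ is abelian the count $N = \frac{p^r - 1}{p - 1}$ follows directly from the structure theorem and is manifestly $\equiv 1 \pmod p$; for the general case one can pass to the center $Z(G)$, which is a nontrivial $p$-group, observe every minimal subgroup of $Z(G)$ is normal in $G$, and run an induction — but the Frobenius argument is cleaner and I would present that.
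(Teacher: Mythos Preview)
Your proof is correct and, in fact, yields the stronger conclusion $N \equiv 1 \pmod p$, so that $N \geq p+1$ whenever $N > 1$. The paper's argument is different and more elementary: assuming for contradiction that there are exactly two subgroups $H_1 = \langle a \rangle$ and $H_2 = \langle b \rangle$ of order $p$, it uses the nontriviality of the center of a $p$-group to arrange that one of $a, b$ lies in $Z(G)$; then $ab = ba$ has order $p$ (it cannot be the identity since $H_1 \neq H_2$), and $\langle ab \rangle$ is a third subgroup of order $p$ distinct from $H_1$ and $H_2$, a contradiction. This avoids Frobenius entirely at the cost of the weaker conclusion (only ruling out $N = 2$ rather than establishing the full congruence). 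Your Frobenius route is cleaner to state, gives more, and would apply verbatim to any finite group of order divisible by $p$; the paper's route is self-contained and requires nothing beyond the fact that $p$-groups have nontrivial center --- which is essentially the alternative you sketched at the end of your proposal.
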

\begin{proof}	
Let $H_1=\langle v_1^{(p)}\rangle, \cdots, H_{\ell}=\langle v_{\ell}^{(p)}\rangle$ be the collection of all distinct cyclic subgroups of order $p$ of $G.$ We prove that either $\ell=1$ or $\ell\geq 3.$ Let $\ell=2.$ Since $G$ is $p$-group, the center of $G$ is $Z(G)$ is non trivial. Therefore, $Z(G)$ has a subgroup $H_{i_1}=\langle v_{i_1}^{(p)}\rangle$ (say) of order $p.$ Let $H_{i_2}=\langle v_{i_2}^{(p)}\rangle$ be another cyclic subgroup of order $p,$ (as $\ell=2, \text{ we can choose more than one subgroup of order } p).$ Then $v_{i_1}^{(p)}v_{i_2}^{(p)}=v_{i_2}^{(p)}v_{i_1}^{(p)}$ and $\text{o}(v_{i_1}^{(p)}v_{i_2}^{(p)})=p.$ Take $H_{i_3}=\langle v_{i_1}^{(p)}v_{i_2}^{(p)}\rangle$ and it is easy to see that $H_1\neq H_3$ and $H_2\neq H_3.$ This completes the proof.
\end{proof}
\begin{lemma}\label{lemma: v1divides v2 and adjacent, so v1 belongs to v2}
Let $G$ be any group. Let $v_1, v_2\in V(\mathcal{G}_E(G))$ such that $\text{o}(v_1)=m_1,\text{o}(v_2)=m_2$ and $m_1$ divides $m_2.$ Then $v_1\sim v_2$ in $\mathcal{G}_E(G)$ if and only if $v_1\in \langle v_2\rangle.$  	
\end{lemma}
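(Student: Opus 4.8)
The plan is to prove the two implications separately; the reverse direction is essentially immediate, and the forward direction reduces to the elementary fact that a finite cyclic group has exactly one subgroup of each order dividing its order.

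For the ($\Leftarrow$) direction, suppose $v_1 \in \langle v_2\rangle$ (and $v_1 \neq v_2$, as is implicit in adjacency). Then taking $w = v_2$, both $v_1$ and $v_2$ belong to $\langle w\rangle$, so by Definition \ref{defn: enhcdpowr graph} we get $v_1 \sim v_2$ in $\mathcal{G}_E(G)$.

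For the ($\Rightarrow$) direction, suppose $v_1 \sim v_2$, so there exists $w \in G$ with $v_1, v_2 \in \langle w\rangle$. First I would observe that $\langle w\rangle$ is a finite cyclic group: indeed $v_1$ and $v_2$ have finite orders $m_1, m_2$, and if $w$ had infinite order then $\langle w\rangle \cong \mathbb{Z}$ would contain no nontrivial element of finite order, forcing $v_1 = v_2 = e$, which is impossible for distinct vertices. In the finite cyclic group $\langle w\rangle$, for each divisor $d$ of $\text{o}(w)$ there is a unique subgroup of order $d$. Since $v_1 \in \langle w\rangle$ has order $m_1$, the subgroup $\langle v_1\rangle$ is the unique subgroup of $\langle w\rangle$ of order $m_1$; similarly $\langle v_2\rangle$ is the unique subgroup of $\langle w\rangle$ of order $m_2$. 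Now $\langle v_2\rangle$ is itself cyclic of order $m_2$ and $m_1 \mid m_2$, so $\langle v_2\rangle$ contains a unique subgroup of order $m_1$; viewed inside $\langle w\rangle$ this subgroup also has order $m_1$, so by the uniqueness just noted it must equal $\langle v_1\rangle$. Hence $\langle v_1\rangle \subseteq \langle v_2\rangle$, i.e.\ $v_1 \in \langle v_2\rangle$, as desired.

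The only point requiring any care is the uniqueness-of-subgroups step: one must make sure the argument is carried out inside the ambient cyclic group $\langle w\rangle$ rather than in $G$ itself, since $G$ is allowed to be non-abelian (indeed infinite). Once this is noted, the divisor--subgroup correspondence for cyclic groups finishes the proof with no case analysis, so I do not anticipate a genuine obstacle.
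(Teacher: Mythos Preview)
Your proof is correct and follows essentially the same route as the paper's: both directions are handled identically, with the forward implication reducing to the uniqueness of subgroups of a given order inside the cyclic group $\langle w\rangle$. Your extra paragraph arguing that $\langle w\rangle$ is finite is harmless but unnecessary here, since the paper's standing convention (stated in Subsection~\ref{subsection:definitions}) is that $G$ is a finite group.
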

\begin{proof}
Let $v_1\in \langle v_2\rangle.$ Then clearly $v_1\sim v_2$ in $\mathcal{G}_E(G).$ Conversely suppose that $v_1\sim v_2$ in $\mathcal{G}_E(G).$ So, there exists $v\in G$ such that $v_1, v_2\in \langle v\rangle.$ Clearly, for each $i\in \{1, 2\},$ $\langle v_i\rangle$ is the unique cyclic subgroup of order $m_i$ of $\langle v\rangle.$ Again, $m_1$ divides $m_2,$ so $\langle v_2\rangle$ must have a subgroup say $H$ of order $m_1.$ Now $\langle v_2\rangle$ is a subgroup of $\langle v\rangle.$ Therefore, $H$ is a subgroup of $\langle v\rangle$ of order $m_1.$ Again $\langle v_1\rangle$ is the unique subgroup of $\langle v\rangle$ of order $m_1.$ So, $H=\langle v_1\rangle.$ Hence $v_1\in \langle v_2\rangle.$
\end{proof}
\section{Dominating vertices of $\mathcal{G}_E(G)$ when $G$ is nilpotent}
\label{sec:dom_vertices}

In Section 5, we will focus on the connectivity of the proper enhanced power graph of finite nilpotent group. To do so first we have to characterize all the dominating vertices of the enhanced power graph. In this portion, we completely classify the dominating vertices of the graph $\mathcal{G}_E(G)$ when $G$ is nilpotent.
  
We denote by $Q_{2^k},$ a generalized quaternion group of order $2^k.$ Let $G_1$ be a nilpotent group having no sylow subgroups which are either cyclic or generalized quaternion. Suppose $e, e''$ are the identities of $G_1$ and $Q_{2^k}$ respectively.  Let $D_1=\{(e, x, e''): x\in \mathbb{Z}_n\}$ and $D_2=\{(e, x, y): x \in \mathbb{Z}_n, y\in Q_{2^k} \text{ and } \text{o}(y)=2\}.$ Then  the following theorem completely characterizes the dominating vertices of the enhanced power graph of any nilpotent group.
\begin{theorem}\label{Thm: Dom set for nilpotent grp}
Let $G$ be a finite nilpotent group. Then 
\begin{equation*}
\text{Dom}(\mathcal{G}_E(G))= 
	\begin{cases}
	\{e\}, & \text{ if } G=G_1 \\
	\{(e, x): x \in \mathbb{Z}_n\},& \text{ if } G=G_1\times \mathbb{Z}_n\text{ and } \text{gcd}(|G_1|, n)=1 \\ 
	\{(e, e''), (e, y)\}, & \text{ if } G=G_1 \times Q_{2^k}\text{ and } \text{gcd}(|G_1|, 2)=1 \\
	D_1\cup D_2, & \text{ if } G=G_1\times \mathbb{Z}_n\times Q_{2^k} \text{ and } \\&\text{gcd}(|G_1|, n)=\text{gcd}(|G_1|, 2)=\text{gcd}(n, 2)=1.
	\end{cases}
	\end{equation*}
\end{theorem}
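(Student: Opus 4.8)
The plan is to decompose the nilpotent group according to the four cases in Subsection \ref{Defn: Nilpotent group} and handle each separately, building on Theorem \ref{Thm: G' nilpotent dominatability iff} (which already settles the first case) and on the inclusion from Proposition \ref{prop: dom for product of groups}. The general strategy for each case is a two-sided containment: the ``easy'' direction exhibits the claimed vertices as dominating vertices using the explicit computations in Theorem \ref{thm: dom, G any grp product cyclic grp} and Lemma \ref{dom of gen Q_2^n in enhced}, while the ``hard'' direction uses Proposition \ref{prop: dom for product of groups} to confine $\mathrm{Dom}(\mathcal{G}_E(G))$ inside a product of the componentwise dominating-vertex sets, after which one prunes the candidates that fail to be globally dominating.

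First I would dispose of Case $2$, $G = G_1 \times \mathbb{Z}_n$ with $\gcd(|G_1|,n)=1$. The inclusion $\{(e,x): x \in \mathbb{Z}_n\} \subseteq \mathrm{Dom}(\mathcal{G}_E(G))$ is immediate from Theorem \ref{thm: dom, G any grp product cyclic grp}. For the reverse, suppose $(a,x)$ is dominating. By Proposition \ref{prop: dom for product of groups} applied to the factorization $G_1 \times \mathbb{Z}_n$ (and noting $\mathbb{Z}_n$ is cyclic, so its enhanced power graph is complete by Lemma \ref{enhd coplte iff cyclic}), we get $a \in \mathrm{Dom}(\mathcal{G}_E(G_1))$; but by Theorem \ref{Thm: G' nilpotent dominatability iff}, $\mathrm{Dom}(\mathcal{G}_E(G_1)) = \{e\}$, so $a = e$, and $x$ is arbitrary. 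Case $3$, $G = G_1 \times Q_{2^k}$ with $\gcd(|G_1|,2)=1$, is analogous: Lemma \ref{dom of gen Q_2^n in enhced} gives $\mathrm{Dom}(\mathcal{G}_E(Q_{2^k})) = \{e'', y\}$ where $y$ is the unique involution, so Proposition \ref{prop: dom for product of groups} forces any dominating vertex into $\{e\} \times \{e'', y\}$; then one must check that both $(e,e'')$ and $(e,y)$ actually are dominating in the product, which follows by the same kind of ``lift a generator'' argument as in Theorem \ref{thm: dom, G any grp product cyclic grp} — given an arbitrary $(g,z)$, if $\mathrm{o}(g)=m$ then $\gcd(m,2^k)=1$ and one produces a cyclic subgroup of $G$ containing both $(e,y)$ (resp. $(e,e'')$) and $(g,z)$ by combining a power of $(g,z)$ landing in $G_1 \times \{e''\}$ with the cyclic structure on the $Q_{2^k}$-coordinate, invoking that $y \in \langle z \rangle$ whenever $z \neq e''$ since $y$ is the unique involution of the generalized quaternion group.

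For Case $4$, $G = G_1 \times \mathbb{Z}_n \times Q_{2^k}$, I would treat $\mathbb{Z}_n \times Q_{2^k}$ as a single factor or iterate Proposition \ref{prop: dom for product of groups} over the three factors: any dominating vertex lies in $\{e\} \times \mathbb{Z}_n \times \{e'', y\}$, which is exactly $D_1 \cup D_2$ as a set. The reverse inclusion $D_1 \cup D_2 \subseteq \mathrm{Dom}(\mathcal{G}_E(G))$ combines the arguments of Cases $2$ and $3$: for a vertex $(e,x,e'') \in D_1$ and an arbitrary $(g,b,z)$, one pushes $(g,b,z)$ by a suitable power into the subgroup $G_1 \times \{e'\} \times \{e''\}$ (using $\gcd(\mathrm{o}(g), n \cdot 2^k)=1$ and Euler's theorem, exactly as in Case 2 of Theorem \ref{thm: dom, G any grp product cyclic grp}), then recovers the $\mathbb{Z}_n$ and $Q_{2^k}$ coordinates via generators of the respective cyclic subgroups; for $(e,x,y) \in D_2$ one additionally uses $y \in \langle z \rangle$ when $z$ is a non-identity element of $Q_{2^k}$, and otherwise reduces to the $D_1$ computation. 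The main obstacle I anticipate is purely bookkeeping: in Case $4$ one has to manage a chain of congruences (a power killing the $G_1$-coordinate, then independently adjusting the $\mathbb{Z}_n$- and $Q_{2^k}$-coordinates while keeping the $G_1$-coordinate trivial) and verify that a single cyclic subgroup simultaneously contains the candidate dominating vertex and the arbitrary target vertex — the coprimality hypotheses $\gcd(|G_1|,n)=\gcd(|G_1|,2)=\gcd(n,2)=1$ are precisely what make the Chinese-remainder / Euler's-theorem gymnastics go through, and the only genuinely non-mechanical input is the uniqueness of the involution in $Q_{2^k}$, which is what rules out the ``wrong'' involutions from ever being dominating and makes $D_2$ exactly the right set rather than something larger.
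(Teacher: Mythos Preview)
Your approach matches the paper's almost exactly: it too splits the proof into four propositions (one per structural case of the nilpotent group), obtains the upper containment via Proposition \ref{prop: dom for product of groups} combined with Lemma \ref{dom of gen Q_2^n in enhced}, and verifies the lower containment by explicit Chinese-remainder/Euler's-theorem constructions in the spirit of Theorem \ref{thm: dom, G any grp product cyclic grp}.

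The one real issue is a circularity. In the paper, Theorem \ref{Thm: G' nilpotent dominatability iff} is derived \emph{as a corollary of} Theorem \ref{Thm: Dom set for nilpotent grp}, not before it, so invoking it to ``already settle the first case'' (and again to force $a=e$ in Cases $2$--$4$) is illicit as written. The fix is easy and is precisely what the paper does: prove the Case $1$ statement $\text{Dom}(\mathcal{G}_E(G_1))=\{e\}$ directly as a standalone proposition --- if $g\neq e$ were dominating and $p_i\mid\text{o}(g)$, pick two distinct cyclic subgroups of order $p_i$ inside the non-cyclic, non-quaternion Sylow $p_i$-subgroup and observe that adjacency to both forces $\langle g\rangle$ to contain two distinct subgroups of order $p_i$, a contradiction --- and then cite that proposition (rather than Theorem \ref{Thm: G' nilpotent dominatability iff}) in the remaining cases.
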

Any abelian group is nilpotent. So, by Theorem \ref{Thm: Dom set for nilpotent grp}, we immediately get the following
corollary.
\begin{corollary}[Theorem 1.4, \cite{bera-dey-sajal}]\label{thm: all dom of G times Zn}
Let $G$ be a non-cyclic abelian group such that $G$ has no cyclic sylow subgroup. If $n\in \mathbb{N}, \text{ and gcd}(|G|, n)=1,$ then $\text{Dom}(\mathcal{G}_E({G\times \mathbb{Z}_n}))=\{(e, x), \text{ where } e \text{ is the identity of } G  \text{ and } x\in\mathbb{Z}_n \}.$ 
\end{corollary}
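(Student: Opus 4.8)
The plan is to derive this corollary directly from the full classification in Theorem \ref{Thm: Dom set for nilpotent grp}, by checking that the hypotheses place $G \times \mathbb{Z}_n$ squarely in the second case of that theorem. The only real work is to verify that the group $G$ appearing in the corollary plays exactly the role of the distinguished factor $G_1$ from the structure theorem in Subsection \ref{Defn: Nilpotent group}.

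First I would observe that since $G$ is abelian it is in particular nilpotent, so we may write $G$ as the direct product of its Sylow subgroups, each of which is an abelian $p$-group. By hypothesis none of these Sylow subgroups is cyclic. Moreover, a generalized quaternion group is non-abelian, so no Sylow subgroup of the abelian group $G$ can be generalized quaternion. Hence $G$ has no Sylow subgroup that is either cyclic or generalized quaternion; in other words, $G$ satisfies precisely the defining conditions of the group $G_1$ in the paper's classification.

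Next, since $\gcd(|G|, n) = 1$, the group $G \times \mathbb{Z}_n$ is exactly an instance of case $(2)$ in the nilpotent structure theorem, namely $G_1 \times \mathbb{Z}_n$ with $\gcd(|G_1|, n) = 1$ and $G_1 = G$. Applying the second line of Theorem \ref{Thm: Dom set for nilpotent grp} then yields
\begin{equation*}
\text{Dom}(\mathcal{G}_E(G \times \mathbb{Z}_n)) = \{(e, x) : x \in \mathbb{Z}_n\},
\end{equation*}
which is the desired conclusion.

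There is essentially no obstacle here, as the content of the corollary is entirely subsumed by Theorem \ref{Thm: Dom set for nilpotent grp}; the only point requiring comment is the elementary observation that abelianness rules out the generalized quaternion case, so that case $(2)$ rather than case $(1)$ is the one that applies. If a self-contained argument is preferred, one can proceed instead as follows: Theorem \ref{thm: dom, G any grp product cyclic grp} already supplies the inclusion $\{(e,x) : x \in \mathbb{Z}_n\} \subseteq \text{Dom}(\mathcal{G}_E(G \times \mathbb{Z}_n))$, and for the reverse inclusion one invokes Proposition \ref{prop: dom for product of groups} together with Theorem \ref{Thm: G' nilpotent dominatability iff} (which gives $\text{Dom}(\mathcal{G}_E(G)) = \{e\}$, since $G$ is abelian with no cyclic Sylow subgroup) to force the first coordinate of every dominating vertex to equal $e$, while the second coordinate may be arbitrary because $\mathbb{Z}_n$ is cyclic.
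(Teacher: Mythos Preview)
Your proposal is correct and follows essentially the same approach as the paper: the paper simply remarks that any abelian group is nilpotent and then invokes Theorem \ref{Thm: Dom set for nilpotent grp}, which is exactly what you do (your additional observation that abelian groups cannot have generalized quaternion Sylow subgroups makes explicit the one verification the paper leaves implicit). Your alternative self-contained route via Theorem \ref{thm: dom, G any grp product cyclic grp} and Proposition \ref{prop: dom for product of groups} is likewise fine, as it merely unpacks the proof of Proposition \ref{Prop: all dom of G times Zn}, which is the relevant ingredient of Theorem \ref{Thm: Dom set for nilpotent grp}.
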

According to the structure (described as in Subsection \ref{Defn: Nilpotent group}) of nilpotent group, we split the proof of Theorem \ref{Thm: Dom set for nilpotent grp} into following $4$ propositions. 
\begin{proposition}\label{prop: dom; Each sylow is neither cyclic nor generalized quaternion group}
Let $G_1$ be a nilpotent group such that $G_1$ has no sylow subgroups which are either cyclic or generalized quaternion. Then $\text{Dom}(\mathcal{G}_E(G_1))=\{e\},$ the identity of $G_1.$ 
\end{proposition}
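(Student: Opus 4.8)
The plan is to show that the only dominating vertex of $\mathcal{G}_E(G_1)$ is the identity $e$, where $G_1 \cong P_1 \times \cdots \times P_r$ with each $P_i$ a $p_i$-group that is neither cyclic nor generalized quaternion. First I would reduce to the case of a single Sylow subgroup via Proposition \ref{prop: dom for product of groups}: since $\text{Dom}(\mathcal{G}_E(G_1)) \subseteq \text{Dom}(\mathcal{G}_E(P_1)) \times \cdots \times \text{Dom}(\mathcal{G}_E(P_r))$, it suffices to prove that for each $i$, $\text{Dom}(\mathcal{G}_E(P_i)) = \{e\}$; the reverse inclusion $\{e\} \subseteq \text{Dom}$ is automatic since the identity is always dominating. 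So the whole statement follows once we establish: if $P$ is a $p$-group that is neither cyclic nor generalized quaternion, then $P$ has no dominating vertex in $\mathcal{G}_E(P)$ other than $e$.

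To prove this single-prime claim, suppose for contradiction that $a \in P \setminus \{e\}$ is a dominating vertex. The key structural fact I would invoke is Lemma \ref{lemma: no of distinct cyclic subgroups orer p geq 3}: a $p$-group that is not cyclic (and not generalized quaternion, so that the $p=2$ exceptional case is excluded) has at least three distinct subgroups of order $p$; in particular we can pick two distinct subgroups $\langle u \rangle$ and $\langle v \rangle$ of order $p$ with $\langle u \rangle \neq \langle v \rangle$. Now $a$ dominating means $a \sim u$ and $a \sim v$ in $\mathcal{G}_E(P)$. The order of $a$ is $p^k$ for some $k \geq 1$, and since the order of $u$ (namely $p$) divides $\text{o}(a)$, Lemma \ref{lemma: v1divides v2 and adjacent, so v1 belongs to v2} forces $u \in \langle a \rangle$, hence $\langle u \rangle \subseteq \langle a \rangle$; likewise $\langle v \rangle \subseteq \langle a \rangle$. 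But $\langle a \rangle$ is cyclic, so it contains a \emph{unique} subgroup of order $p$, contradicting $\langle u \rangle \neq \langle v \rangle$. Therefore no such $a$ exists, and $\text{Dom}(\mathcal{G}_E(P)) = \{e\}$.

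I expect the main obstacle to be purely a matter of bookkeeping rather than of depth: one must be careful that Lemma \ref{lemma: no of distinct cyclic subgroups orer p geq 3} genuinely applies, i.e.\ that the hypothesis "$P$ is not cyclic" is exactly what is needed to rule out the case $\ell = 1$ (a noncyclic $p$-group always has more than one subgroup of order $p$), and that the "not generalized quaternion" hypothesis is actually not needed for \emph{this} proposition — it will only matter in the later cases where generalized quaternion Sylow factors contribute extra dominating vertices. If one prefers to avoid citing Lemma \ref{lemma: no of distinct cyclic subgroups orer p geq 3}, an alternative is to observe directly that a noncyclic $p$-group $P$ has noncyclic (hence, by Cauchy inside any minimal noncyclic abelian section, at least $p+1 \geq 3$) subgroups of order $p$ already inside an elementary abelian subgroup of rank $\geq 2$; but invoking the stated lemma is cleaner. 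The rest is the elementary adjacency argument via Lemma \ref{lemma: v1divides v2 and adjacent, so v1 belongs to v2}, which is immediate.
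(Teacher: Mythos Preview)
Your proof is correct and essentially matches the paper's: both pick two distinct order-$p$ subgroups in a Sylow factor, use dominance to force both inside a single cyclic subgroup, and reach a contradiction via uniqueness of subgroups in cyclic groups. The only cosmetic difference is that you reduce to a single Sylow factor first via Proposition~\ref{prop: dom for product of groups}, whereas the paper argues directly in $G_1$ by choosing a prime $p_i$ dividing $\text{o}(g)$.

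One correction to your closing remark, however: the parenthetical claim that ``a noncyclic $p$-group always has more than one subgroup of order $p$'' is false, and the ``not generalized quaternion'' hypothesis \emph{is} genuinely needed for this proposition. The generalized quaternion groups $Q_{2^k}$ are noncyclic $2$-groups with a \emph{unique} subgroup of order $2$, and by Lemma~\ref{dom of gen Q_2^n in enhced} that unique involution is a dominating vertex of $\mathcal{G}_E(Q_{2^k})$, so the conclusion $\text{Dom}=\{e\}$ fails there. The classical fact is that a finite $p$-group has a unique subgroup of order $p$ if and only if it is cyclic or (for $p=2$) generalized quaternion; thus both hypotheses together are exactly what guarantee $\ell \geq 2$, and dropping either one breaks the argument.
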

\begin{proof}
$G_1$ is nilpotent so, $G_1\cong P_1\times \cdots\times P_r,$ where each $P_i$ is a sylow subgroup of order $p_i^{t_i}$ of $G_1.$ Let $g(\neq e)$ be a dominating vertex of the graph $\mathcal{G}_E(G_1).$ Suppose $p_i$ is a prime divisor of $\text{o}(g).$ Now it is given that each $P_i$ is neither cyclic nor generalized quaternion. So, the number of distinct cyclic subgroup of order $p_i$ in $P_i$ is greater than $1.$ Let $H_1=\langle h\rangle$ and $H_2=\langle h'\rangle$ be two distinct cyclic subgroups of order $p_i$ of $P_i.$ Since $g$ is a dominating vertex, then $g\sim h$ and $g\sim h'$ in $\mathcal{G}_E(G_1).$  Therefore, there exists two cyclic subgroups $K_1, K_2$ in $G_1$ such that $g, h\in K_1$ and $g, h'\in K_2.$ Now $g, h\in K_1,$ and $K_1$ is cyclic, so $h\in \langle g\rangle.$ Similarly, $h'\in \langle g\rangle.$ Therefore, $\langle g\rangle$ contains two distinct cyclic subgroup of order $p_i.$ A contradiction. Hence $\text{Dom}(\mathcal{G}_E(G_1))=\{e\}.$
\end{proof}
\begin{proposition}\label{Prop: all dom of G times Zn}	
Let $G_1$ be a nilpotent group having no sylow subgroups which are either cyclic or generalized quaternion. If $n\in \mathbb{N}, \text{ and gcd}(|G_1|, n)=1,$ then $\text{Dom}(\mathcal{G}_E({G_1\times \mathbb{Z}_n}))=\{(e, g): e \text{ is the identity of } G_1 \text{ and } g \in \mathbb{Z}_n\}.$ 
\end{proposition}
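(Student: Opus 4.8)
The plan is to prove the two set inclusions separately, relying entirely on the results already assembled in Section \ref{Sec:Preliminaries} together with the previous proposition; no new combinatorial argument is needed.

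First I would dispose of the inclusion $\{(e,g): g\in\mathbb{Z}_n\}\subseteq\text{Dom}(\mathcal{G}_E(G_1\times\mathbb{Z}_n))$. This is immediate from Theorem \ref{thm: dom, G any grp product cyclic grp} applied with the group $G_1$ playing the role of $G$: since $\text{gcd}(|G_1|,n)=1$, every vertex of the form $(e,a)$ with $a\in\mathbb{Z}_n$ is a dominating vertex of $\mathcal{G}_E(G_1\times\mathbb{Z}_n)$.

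For the reverse inclusion I would invoke Proposition \ref{prop: dom for product of groups} with $r=2$, which gives $\text{Dom}(\mathcal{G}_E(G_1\times\mathbb{Z}_n))\subseteq\text{Dom}(\mathcal{G}_E(G_1))\times\text{Dom}(\mathcal{G}_E(\mathbb{Z}_n))$. Now $\mathbb{Z}_n$ is cyclic, so by Lemma \ref{enhd coplte iff cyclic} the graph $\mathcal{G}_E(\mathbb{Z}_n)$ is complete, whence $\text{Dom}(\mathcal{G}_E(\mathbb{Z}_n))=\mathbb{Z}_n$. On the other hand $G_1$ is nilpotent with no cyclic and no generalized quaternion sylow subgroup, so Proposition \ref{prop: dom; Each sylow is neither cyclic nor generalized quaternion group} yields $\text{Dom}(\mathcal{G}_E(G_1))=\{e\}$. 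Combining these two facts gives $\text{Dom}(\mathcal{G}_E(G_1\times\mathbb{Z}_n))\subseteq\{e\}\times\mathbb{Z}_n$, which is exactly the asserted set, and together with the first inclusion this finishes the proof.

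There is no genuine obstacle here: the proposition is essentially a bookkeeping corollary of Theorem \ref{thm: dom, G any grp product cyclic grp}, Proposition \ref{prop: dom for product of groups}, Lemma \ref{enhd coplte iff cyclic}, and Proposition \ref{prop: dom; Each sylow is neither cyclic nor generalized quaternion group}. The only points requiring a moment's care are to notice that the coprimality hypothesis $\text{gcd}(|G_1|,n)=1$ is precisely what licenses Theorem \ref{thm: dom, G any grp product cyclic grp} for the easy inclusion, and that the hypothesis that no sylow subgroup of $G_1$ is cyclic or generalized quaternion is exactly what makes $\text{Dom}(\mathcal{G}_E(G_1))$ trivial via the preceding proposition, so that the product bound from Proposition \ref{prop: dom for product of groups} collapses to the desired set.
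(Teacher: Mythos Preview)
Your proof is correct and follows essentially the same approach as the paper: both use Theorem \ref{thm: dom, G any grp product cyclic grp} for the inclusion $\{e\}\times\mathbb{Z}_n\subseteq\text{Dom}$ and Proposition \ref{prop: dom; Each sylow is neither cyclic nor generalized quaternion group} to force the first coordinate of any dominating vertex to be $e$. The only cosmetic difference is that for the reverse inclusion you invoke Proposition \ref{prop: dom for product of groups} directly, whereas the paper re-argues its content inline (projecting a dominating vertex $(a,g)$ onto the first factor to conclude $a\in\text{Dom}(\mathcal{G}_E(G_1))$); your version is arguably cleaner since that proposition is already available.
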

\begin{proof}
By Theorem \ref{thm: dom, G any grp product cyclic grp},  $\{(e, g): g\in \mathbb{Z}_n\}\subseteq \text{Dom}(\mathcal{G}_E(G_1\times \mathbb{Z}_n)).$ Let $(a, g)$ be a dominating vertex of $\mathcal{G}_E(G_1\times \mathbb{Z}_n),$ where $a\neq e.$ Then, $(a, g)\sim (x, y)$ for all $x\in G_1$ and $y\in \mathbb{Z}_n.$ So, there exists a cyclic subgroup $\langle (c_x, d_y)\rangle$ such that $(a, g), (x, y)\in \langle (c_x, d_y)\rangle.$ As a  result, $a, x\in \langle c_x\rangle$ and $g, y\in \langle d_y\rangle.$ Consequently, $a\sim x$ and $g\sim y.$ Since $x$ is arbitrary, we can say that $a (\neq e)$ is a dominating vertex of $\mathcal{G}_E(G_1).$ This contradicts Theorem \ref{prop: dom; Each sylow is neither cyclic nor generalized quaternion group}. Hence the result.
\end{proof}
\begin{proposition}\label{prop: dom; nilpotent grp times quaternion grp}
Let $G_1$ be a nilpotent group having no sylow subgroups which are either cyclic or generalized quaternion. If $n\in \mathbb{N}, \text{ and gcd}(|G_1|, 2)=1,$ then $\text{Dom}(\mathcal{G}_E({G_1\times Q_{2^k}}))=\{(e, e''), (e, y) \}, \text{ where } e, e'' \text{ are the identities of } G_1, Q_{2^k} \text{ respectively  and }  \text{o}(y)=2.$ 	
\end{proposition}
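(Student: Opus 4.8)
The plan is to characterize $\text{Dom}(\mathcal{G}_E(G_1 \times Q_{2^k}))$ by combining the general product bound of Proposition \ref{prop: dom for product of groups} with the known dominatability facts for each factor. First I would show the inclusion $\{(e,e''),(e,y)\} \subseteq \text{Dom}(\mathcal{G}_E(G_1\times Q_{2^k}))$ where $\text{o}(y)=2$. For $(e,e'')$ this is automatic since it is the identity. For $(e,y)$, I would take an arbitrary vertex $(g,h)$ and produce an element $w$ with $(e,y),(g,h)\in\langle w\rangle$: since $y$ is the unique involution of $Q_{2^k}$, it lies in every nontrivial cyclic subgroup, so $y\in\langle h\rangle$ whenever $h\neq e''$; then $\gcd(|G_1|,2)=1$ lets me run the same Euler's theorem argument as in Theorem \ref{thm: dom, G any grp product cyclic grp} (with the roles adjusted) to find a power of $(g,h)$ equal to $(g,e'')$ and a suitable power equal to $(e,y)$, then glue. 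Concretely, with $m=\text{o}(g)$ coprime to $2^k$, $(g,h)^{n^{\phi(m)}}$-type manipulations give $(g,e'')$, and an appropriate power gives $(e,y)$ since $\text{o}(h)$ is even; Bézout on $m$ and $2^k$ then yields $(e,y)\in\langle(g,h)^{m}\rangle$ and finally $(g,h)\in\langle(g,y')\rangle$ for a generator $y'$ of a cyclic subgroup containing both $h$ and $y$.

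Next I would prove the reverse inclusion. By Proposition \ref{prop: dom for product of groups}, any dominating vertex $(a,b)$ of $\mathcal{G}_E(G_1\times Q_{2^k})$ satisfies $a\in\text{Dom}(\mathcal{G}_E(G_1))$ and $b\in\text{Dom}(\mathcal{G}_E(Q_{2^k}))$. By Proposition \ref{prop: dom; Each sylow is neither cyclic nor generalized quaternion group}, $\text{Dom}(\mathcal{G}_E(G_1))=\{e\}$, so $a=e$. By Lemma \ref{dom of gen Q_2^n in enhced} (the non-abelian $2$-group case, noting $Q_{2^k}$ is non-abelian for $k\geq 3$), $\text{Dom}(\mathcal{G}_E(Q_{2^k}))=\{e'',y\}$ with $y$ the unique element of order $2$. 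Hence $(a,b)\in\{(e,e''),(e,y)\}$, giving the claimed equality. I should double-check the small/degenerate cases: if $k\le 2$ then $Q_{2^k}$ would be cyclic, which is excluded by hypothesis, so $Q_{2^k}$ is genuinely non-abelian and Lemma \ref{dom of gen Q_2^n in enhced} applies directly.

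The main obstacle I anticipate is the forward direction — verifying carefully that $(e,y)$ really is dominating, i.e.\ handling the element $h$ of $Q_{2^k}$ in all cases (trivial, order $2$, and order $\geq 4$, including the noncentral elements of order $4$ in $Q_{2^k}$). The key point making this work is that every nontrivial cyclic subgroup of $Q_{2^k}$ contains the unique involution $y$, so $y\in\langle h\rangle$ for all $h\neq e''$; combined with the coprimality $\gcd(|G_1|,2)=1$ this reduces the verification to the same bookkeeping (Euler's theorem plus Bézout) already carried out in the proof of Theorem \ref{thm: dom, G any grp product cyclic grp}. Everything else is a routine invocation of Propositions \ref{prop: dom for product of groups} and \ref{prop: dom; Each sylow is neither cyclic nor generalized quaternion group} together with Lemma \ref{dom of gen Q_2^n in enhced}.
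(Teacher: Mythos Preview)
Your proposal is correct and follows essentially the same route as the paper: the forward inclusion uses that $(e,y)$ is the unique order-$2$ element of $G_1\times Q_{2^k}$ (so it lies in $\langle(g,h)\rangle$ whenever $h\neq e''$) together with a B\'ezout/coprimality argument for the case $h=e''$, and the reverse inclusion combines Proposition~\ref{prop: dom for product of groups} with Proposition~\ref{prop: dom; Each sylow is neither cyclic nor generalized quaternion group} and Lemma~\ref{dom of gen Q_2^n in enhced}. The paper's write-up of Case~1 is actually a bit cleaner than your sketch (it goes straight to $(e,y)\in\langle(g,h)\rangle$ without the intermediate ``find $(g,e'')$ as a power'' step), and it leaves the invocation of Proposition~\ref{prop: dom for product of groups} implicit, but the substance is the same.
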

\begin{proof}
We know that $Q_{2^k}$ has a unique subgroup  $\langle y \rangle$ (say) of order $2.$ Now $\text{gcd}(2, |G_1|)=1,$ so $G_1\times Q_{2^k}$ has a unique subgroup of order $2,$ namely $\langle(e, y)\rangle.$ We show that $(e, y)$ is a dominating vertex of the graph $\mathcal{G}_E(G_1\times Q_{2^k}).$ Let $(a, b)$ be an arbitrary vertex of $\mathcal{G}_E(G_1\times Q_{2^k}).$ 
	
Case 1: Let $b\neq e'',$ the identity of the group $Q_{2^k}.$ Then $\text{o}(a, b)$ has a factor $2^{\ell}, \ell\geq 1.$ So, $\langle (a, b)\rangle$ has a cyclic subgroup of order $2.$ Now $\langle(e, y)\rangle$ is the unique subgroup of order $2$ implies that $(e, y)\in \langle (a, b)\rangle.$ Hence $(e, y)\sim (a, b).$  
	
Case 2: Let $b=e''.$  Here we show that $(a, e'')\in \langle(a, b')\rangle,$ for some $b'\in Q_{2^k}$ such that $\text{o}(b')=2^{r}, r\geq 1.$ It is given that $\text{gcd}(2^k, |G_1|)=1,$ so there exists $k_1, k_2$ such that $2^kk_1+\text{o}(a)k_2=1.$ Now it is cleared that $(a, b')^{2^kk_1}=(a, e'').$ So, $(a, e'')\in \langle (a, b')\rangle.$ As before we can show that $(e, y)\in \langle (a, b')\rangle.$ As a result, $(e, y)\sim (a, e'').$ Hence, $(e, y)$ is a dominating vertex. Now by Lemma \ref{dom of gen Q_2^n in enhced} and Proposition \ref{prop: dom; Each sylow is neither cyclic nor generalized quaternion group} we can say that $\text{Dom}(\mathcal{G}_E({G_1\times Q_{2^k}}))=\{(e, e''), (e, y) \}, \text{ where } e, e'' \text{ are the identities of } G_1, Q_{2^k} \text{ respectively  and }  \text{o}(y)=2.$  	
\end{proof}
\begin{proposition}\label{prop:dom_cyclic_quarternion_normal}
Let $G_1$ be a nilpotent group having no sylow subgroups which are either  cyclic or generalized quaternion. If $n\in \mathbb{N}, \text{ and gcd}(|G_1|, n)=\text{gcd}(|G_1|, 2)=\text{gcd}(n, 2)=1,$ then $\text{Dom}(\mathcal{G}_E({G_1\times\mathbb{Z}_n\times Q_{2^k}}))=\{(e, x, e''): x\in \mathbb{Z}_n\}\cup\{(e, x, y): x \in \mathbb{Z}_n \text{ and } y\in Q_{2^k} \text{ with } \text{o}(y)=2\}.$ 	
\end{proposition}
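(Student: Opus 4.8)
The plan is to establish the two inclusions $D_1\cup D_2\subseteq \text{Dom}(\mathcal{G}_E(G))$ and $\text{Dom}(\mathcal{G}_E(G))\subseteq D_1\cup D_2$ separately, where $G=G_1\times\mathbb{Z}_n\times Q_{2^k}$. Recall that $Q_{2^k}$ has a \emph{unique} element $y_0$ of order $2$, so that in fact $D_2=\{(e,x,y_0):x\in\mathbb{Z}_n\}$. For the reverse inclusion I would prove something slightly stronger: for every vertex $v=(a,b,c)$ of $\mathcal{G}_E(G)$ there is a \emph{single} cyclic subgroup of $G$ containing $v$ together with all of $D_1\cup D_2$. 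Fix a generator $b_0$ of $\mathbb{Z}_n$ and put
\[
w=\begin{cases}(a,b_0,c),&\text{if }c\neq e'',\\(a,b_0,y_0),&\text{if }c=e''.\end{cases}
\]

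The orders of the three coordinates of $w$ are $\text{o}(a)\mid|G_1|$, $\text{o}(b_0)=n$, and a power of $2$; the hypotheses $\text{gcd}(|G_1|,n)=\text{gcd}(|G_1|,2)=\text{gcd}(n,2)=1$ make these pairwise coprime, so by the Chinese Remainder Theorem $\langle w\rangle$ splits as the internal direct product of the cyclic groups generated by its coordinates. Hence the elements of $\langle w\rangle$ are exactly the triples whose entries lie in $\langle a\rangle$, in $\langle b_0\rangle=\mathbb{Z}_n$, and in the cyclic subgroup generated by the third coordinate of $w$. It is then immediate that $v\in\langle w\rangle$ and that $D_1=\{(e,x,e''):x\in\mathbb{Z}_n\}\subseteq\langle w\rangle$. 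The only point that needs an observation is $D_2\subseteq\langle w\rangle$, i.e. that $y_0$ lies in that third-coordinate subgroup: when $c=e''$ this is the reason for the second branch in the definition of $w$, and when $c\neq e''$ it holds because $\langle c\rangle$ is a nontrivial cyclic subgroup of $Q_{2^k}$, hence of even order, and therefore contains the unique order-$2$ subgroup $\langle y_0\rangle$. Consequently each $u\in D_1\cup D_2$ satisfies $u,v\in\langle w\rangle$, so $u\sim v$ whenever $u\neq v$; as $v$ was arbitrary, $u$ is a dominating vertex, proving $D_1\cup D_2\subseteq\text{Dom}(\mathcal{G}_E(G))$.

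For the forward inclusion I would simply assemble earlier results. Applying Proposition~\ref{prop: dom for product of groups} to the decomposition $G=G_1\times\mathbb{Z}_n\times Q_{2^k}$ gives
\[
\text{Dom}(\mathcal{G}_E(G))\subseteq\text{Dom}(\mathcal{G}_E(G_1))\times\text{Dom}(\mathcal{G}_E(\mathbb{Z}_n))\times\text{Dom}(\mathcal{G}_E(Q_{2^k})).
\]
Now $\text{Dom}(\mathcal{G}_E(G_1))=\{e\}$ by Proposition~\ref{prop: dom; Each sylow is neither cyclic nor generalized quaternion group}; the group $\mathbb{Z}_n$ is cyclic, so $\mathcal{G}_E(\mathbb{Z}_n)$ is complete by Lemma~\ref{enhd coplte iff cyclic} and thus $\text{Dom}(\mathcal{G}_E(\mathbb{Z}_n))=\mathbb{Z}_n$; and $\text{Dom}(\mathcal{G}_E(Q_{2^k}))=\{e'',y_0\}$ by Lemma~\ref{dom of gen Q_2^n in enhced}. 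Substituting, $\text{Dom}(\mathcal{G}_E(G))\subseteq\{e\}\times\mathbb{Z}_n\times\{e'',y_0\}=D_1\cup D_2$, which combined with the previous paragraph yields the claimed equality. I expect no serious obstacle here: essentially all the content has been front-loaded into Propositions~\ref{prop: dom; Each sylow is neither cyclic nor generalized quaternion group}--\ref{prop: dom; nilpotent grp times quaternion grp} and Lemma~\ref{dom of gen Q_2^n in enhced}, and the only mild subtlety is the case distinction in the choice of the witness $w$ above, forced by the need to keep capturing $D_2$ when the $Q_{2^k}$-coordinate of $v$ is trivial.
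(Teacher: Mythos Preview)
Your proof is correct and follows essentially the same strategy as the paper: both construct, for an arbitrary vertex $(a,b,c)$, a witness cyclic subgroup generated by $(a,x',c)$ or $(a,x',y_0)$ (with $x'$ a generator of $\mathbb{Z}_n$) according to whether $c\neq e''$, and both obtain the opposite inclusion from Proposition~\ref{prop: dom for product of groups} together with Proposition~\ref{prop: dom; Each sylow is neither cyclic nor generalized quaternion group} and Lemma~\ref{dom of gen Q_2^n in enhced}. Your Chinese Remainder Theorem decomposition of $\langle w\rangle$ is a cleaner packaging of the paper's explicit exponent computations, but the underlying argument is the same.
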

\begin{proof}
We show that each element of the set $D=D_1\cup D_2$ is a dominating vertex, where $D_1=\{(e, x, e''): x\in \mathbb{Z}_n\} \text{ and } D_2=\{(e, x, y): x \in \mathbb{Z}_n \text{ and } y\in Q_{2^k} \text{ with } \text{o}(y)=2\}.$ Let $(a, b, c)$ be an arbitrary vertex of $\mathcal{G}_E({G_1\times\mathbb{Z}_n\times Q_{2^k}}).$ First we show that $(e, x, e'')$ and $(a, b, c)\in \langle (a, x', c)\rangle.$ Here we consider two cases.
	
Case 1: Let $c\neq e''.$ It is cleared that $(e, x, y)=(e, x, e'')(e, e', y),$ where $e'$ is the identity of $\mathbb{Z}_n.$ Let $|G_1|=m$ and $\text{o}(c)=2^{\ell}, \ell\geq 1.$ Now $\text{gcd}(2^{\ell}, n)=1$ and $\text{gcd}(m, n)=1.$ So, there exist $k_1, k_2$ and $r_1, r_2$ such that $k_12^{\ell}+k_2n=1$ and $r_1m+r_2n=1.$ Now 
	\begin{align*}
	(a, x', c)^{(2^{\ell}k_1)(r_1m)}=&(a^{2^{\ell}k_1}, x'^{2^{\ell}k_1}, c^{2^{\ell}k_1})^{r_1m}\\
	=&(a^{2^{\ell}k_1}, x', e'')^{r_1m}\\
	=&(e, x', e'')
	\end{align*}
	So, $(e, x', e'')\in \langle (a, x', c)\rangle.$ Again $x'$ is a generator of $\mathbb{Z}_n,$ therefore, $(e, x, e'')\in \langle (e, x', e'')\rangle\subseteq \langle (a, x', c)\rangle.$ Also, it is given that $\text{gcd}(2^{\ell}, m)=1.$ So there exist $s_1, s_2$ such that $2^{\ell}s_1+ms_2=1.$ Therefore,  
	\[(a, x', c)^{(k_2n)(2^{\ell}s_1)}=(a, e', e'') \text{ and } (a, x', c)^{(k_2n)(s_2m)}=(e, e', c).\] Also, $(e, b, e'')\in \langle (e, x', e'')\rangle $ (as $b\in \mathbb{Z}_n$ and $x'$ is a generator of $\mathbb{Z}_n).$ Moreover, it is proved that $\langle(e, x', e'')\rangle\subseteq \langle (a, x', c)\rangle.$ Therefore, $(e, b, e'')\in \langle (a, x', c)\rangle.$  Now, $(a, b, c)=(a, e', e'')(e, b, e'')(e, e', c).$ As a result, if $c\neq e'',$ then $(a, b, c)\in \langle (a, x', c)\rangle.$ Hence in this case, $(e, x, e'')\sim (a, b, c).$ 
	
	Now we show that $(e, x, y)\in \langle (a, x', c)\rangle.$
	Clearly, $(e, x, e'')(e, e', y)=(e, x, y).$ Already we have shown that $(e, x, e'')\in \langle(a, x', c)\rangle.$ So, it is enough to show that $(e, e', y)\in \langle (a, x', c)\rangle.$ Note that $y$ is the unique element of order $2$ in $Q_{2^k}$ and $\text{o}(c)=2^{\ell}.$ Therefore, $c^r=y,$ for some $r.$ Now it is easy to see that $(a, x', c)^{(rs_2m)(k_2n)}=(e, e', y).$ Thus $(e, x, y)=(e, x, e'')(e, e', y)\in \langle (a, x', c)\rangle.$ Consequently $(e, x, y)\sim (a, b, c).$
	
	Case 2: Let $c=e''.$ Then $(a, b, e'')=(a, e', e'')(e, b, e'').$ In this case, continuing the same way as Case 1, we can show that $(e, x, e''), (e, x, y), (a, b, e'')\in \langle (a, x', y)\rangle.$ Thus $(e, x, e'')\sim (a, b, e'')$ and $(e, x, y)\sim (a, b, e'').$ Hence $D\subseteq\text{Dom}(\mathcal{G}_E({G_1\times\mathbb{Z}_n\times Q_{2^k}})).$ Now by Lemma \ref{dom of gen Q_2^n in enhced} and Proposition \ref{prop: dom for product of groups},  $\text{Dom}(\mathcal{G}_E({G_1\times\mathbb{Z}_n\times Q_{2^k}}))=D.$     		
\end{proof}

\begin{proof}[Proof of Theorem \ref{Thm: Dom set for nilpotent grp} and Theorem \ref{Thm: G' nilpotent dominatability iff}] Theorem \ref{Thm: Dom set for nilpotent grp} follows from Propositions \ref{prop: dom; Each sylow is neither cyclic nor generalized quaternion group},  \ref{Prop: all dom of G times Zn}, \ref{prop: dom; nilpotent grp times quaternion grp}, and  \ref{prop:dom_cyclic_quarternion_normal}. Theorem \ref{Thm: G' nilpotent dominatability iff} directly follows from Theorem \ref{Thm: Dom set for nilpotent grp}. 
\end{proof}

%

\section{Connectivity and diameter of proper enhanced power graph}
\label{sec:connectivity_diameter} 
Finding the vertex connectivity for the power graph and enhanced power graph of a group has been a very interesting problem for the last decade. Many researchers have attempted and found out good bounds for the power graphs. For the power graph of a cyclic group, Chattopadhyay et. al in \cite{chatto-sahoo-patro, chatto-sahoo-patro-ii} found the exact vertex connectivity for most of the cyclic groups and has given an upper bound for the rest. Chattopadhyay et al. in \cite{chatto-sahoo-patro-iii} have given exact values for the vertex connectivity of particular kinds of some nilpotent groups. Bera et al. in \cite[Theorem 1.6]{bera-dey-sajal} proved the following upper bound on the vertex connectivity for the enhanced power graph of an abelian group. 
\begin{theorem}\label{vc of enhced pwr raph for grnrral abelian grp}	
	Let $G$ be a non-cyclic abelian group such that \[G\cong\mathbb{Z}_{p^{t_{11}}_1}
	\times\cdots\times\mathbb{Z}_{p^{t_{1k_1}}_1}\times \mathbb{Z}_{p^{t_{21}}_2}\times\cdots\times\mathbb{Z}_{p^{t_{2k_2}}_2}\times\cdots\times
	\mathbb{Z}_{p^{t_{r1}}_r}\times\cdots\times\mathbb{Z}_{p^{t_{rk_r}}_r},\] where $k_i\geq 1 $ and $1\leq t_{i1}\leq t_{i2}\leq\cdots\leq t_{ik_i} $, for all $i\in [r]. $ Then \[\kappa(\mathcal{G}_E(G))\leq p_1^{t_{11}}p_2^{t_{21}}\cdots p_r^{t_{r1}}-\phi(p_1^{t_{11}}p_2^{t_{21}}\cdots p_r^{t_{r1}}).\]
\end{theorem}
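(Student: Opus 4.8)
The plan is to exhibit an explicit vertex cut of $\mathcal{G}_E(G)$ of size exactly $N-\phi(N)$, where $N:=p_1^{t_{11}}p_2^{t_{21}}\cdots p_r^{t_{r1}}$. Write $G=G_{p_1}\times\cdots\times G_{p_r}$ for the primary (Sylow) decomposition, so that $G_{p_i}=\mathbb{Z}_{p_i^{t_{i1}}}\times\cdots\times\mathbb{Z}_{p_i^{t_{ik_i}}}$. For each $i$, let $g_i\in G_{p_i}$ be a generator of the first direct summand $\mathbb{Z}_{p_i^{t_{i1}}}$ (and $0$ in the remaining coordinates), and set $H:=\langle g_1\rangle\times\cdots\times\langle g_r\rangle$. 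Since the $p_i$ are pairwise distinct primes, $H$ is cyclic of order $N$. Let $U$ be the set of generators of $H$, so that $|U|=\phi(N)$, and put $S:=H\setminus U$, a set of size $N-\phi(N)$ containing the identity. Note that $G$ being non-cyclic implies $\mathcal{G}_E(G)$ is not complete by Lemma~\ref{enhd coplte iff cyclic}, so $\kappa(\mathcal{G}_E(G))$ is defined; I claim $S$ is a vertex cut, which gives the bound.

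First I would record the standard fact that in an abelian group two distinct elements $x,y$ are adjacent in $\mathcal{G}_E(G)$ if and only if $\langle x,y\rangle$ is cyclic, and, writing $x=(x_1,\dots,x_r)$ and $y=(y_1,\dots,y_r)$ in the primary decomposition, this holds if and only if for every $i$ the cyclic subgroups $\langle x_i\rangle$ and $\langle y_i\rangle$ of $G_{p_i}$ are comparable under inclusion. The crucial point is then the following observation: \emph{$\langle g_i\rangle$ is a maximal cyclic subgroup of $G_{p_i}$.} Indeed, if $\langle g_i\rangle$ were properly contained in a cyclic subgroup $\langle z\rangle\leq G_{p_i}$, then, since a cyclic $p$-group has a unique subgroup of each order, $\langle g_i\rangle=\langle z^{p_i^{c}}\rangle$ for some $c\geq 1$, so some generator of $\langle g_i\rangle$ would be a $p_i$-th power in $G_{p_i}$; but a generator of $\langle g_i\rangle$ has the form $(u,0,\dots,0)$ with $\gcd(u,p_i)=1$, while the first coordinate of any $p_i$-th power in $G_{p_i}$ is divisible by $p_i$ (here we use $t_{i1}\geq 1$) — a contradiction. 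This maximality step is the only real content of the argument; I expect it to be the main obstacle, and everything else is bookkeeping with the primary decomposition.

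Combining the two, take any $g\in U$ and any neighbour $x$ of $g$ in $\mathcal{G}_E(G)$: for each $i$ the subgroups $\langle g_i\rangle$ and $\langle x_i\rangle$ are comparable, and maximality of $\langle g_i\rangle$ forces $\langle x_i\rangle\subseteq\langle g_i\rangle$ (the alternative $\langle g_i\rangle\subseteq\langle x_i\rangle$ would give equality). Hence $x\in\prod_i\langle g_i\rangle=H$, so every neighbour of $g$ lies in $H=U\cup S$. Consequently, after deleting $S$, every neighbour in $\mathcal{G}_E(G)\setminus S$ of a vertex of $U$ again lies in $U$, i.e.\ $U$ is a union of connected components of $\mathcal{G}_E(G)\setminus S$. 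Since $\phi(N)\geq 1$ we have $U\neq\emptyset$, and since $G$ is non-cyclic while $H$ is cyclic there is a vertex outside $H$, which survives the deletion of $S\subseteq H$ and lies outside $U$; thus $\mathcal{G}_E(G)\setminus S$ is disconnected. Therefore $\kappa(\mathcal{G}_E(G))\leq|S|=N-\phi(N)=p_1^{t_{11}}p_2^{t_{21}}\cdots p_r^{t_{r1}}-\phi\big(p_1^{t_{11}}p_2^{t_{21}}\cdots p_r^{t_{r1}}\big)$, as required.
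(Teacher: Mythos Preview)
Your argument is correct. The key insight---that the first cyclic factor $\langle g_i\rangle\cong\mathbb{Z}_{p_i^{t_{i1}}}$ is a \emph{maximal} cyclic subgroup of $G_{p_i}$ because no generator of it is a $p_i$-th power---is exactly what is needed, and the reduction to componentwise comparability of cyclic $p$-subgroups is the right way to characterise adjacency in $\mathcal{G}_E(G)$ for abelian $G$. The conclusion that the closed neighbourhood of every generator of $H$ is contained in $H$, so that $U$ separates from $G\setminus H$ once $S=H\setminus U$ is removed, is sound.

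As for comparison: note that the present paper does not actually prove this statement. It is quoted as Theorem~1.6 of \cite{bera-dey-sajal} and used here only as input for the improved bound in Theorem~\ref{improved vc of enhced pwr raph for grnrral abelian grp}. So there is no proof within this paper to compare your approach against. Your self-contained argument---exhibiting the cut set $S$ explicitly as the non-generators of the ``small diagonal'' cyclic subgroup $H$ of order $p_1^{t_{11}}\cdots p_r^{t_{r1}}$---is the natural construction and is almost certainly what the cited paper does as well; the set $S$ of that size appearing in the proof of Theorem~\ref{improved vc of enhced pwr raph for grnrral abelian grp} is precisely your $H\setminus U$.
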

In this section, we prove an improved bound on the vertex connectivity for the enhanced power graph of a finite abelian group. Also, we derive the exact value of the vertex connectivity of the enhanced power graphs of some particular kind of nilpotent groups. Besides, we classify all nilpotent groups $G$ for which $\mathcal{G}_E^{**}(G)$ is connected and find out their diameters.  
\begin{theorem}
\label{thm:at_least_r_components}
	Let $G$ be an abelian group such that  $G \cong G_1  \times \mathbb{Z}_n$ and $\text{gcd}(|G_1|, n)=1.$ Let, $S \subset V(\mathcal{G}_E(G))$  such that
	$\mathcal{G}_{E}(G_1 \setminus S)$ has exactly $r$ components $C_1, C_2, \cdots, C_r.$ Then, 	$\mathcal{G}_{E}(G \setminus (S \times \mathbb{Z}_n))$ has at least $r$ components. In particular, if $\mathcal{G}_{E}(G_1 \setminus S)$ is disconnected, $\mathcal{G}_{E}(G \setminus (S \times \mathbb{Z}_n))$ is also disconnected. 
\end{theorem}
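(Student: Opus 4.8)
The plan is to build a surjective map from the set of components of $\mathcal{G}_E(G \setminus (S \times \mathbb{Z}_n))$ onto the set of components of $\mathcal{G}_E(G_1 \setminus S)$, which would immediately give the lower bound of $r$ on the number of components of the former. First I would set up the natural projection $\pi\colon G_1 \times \mathbb{Z}_n \to G_1$, $\pi(a,b) = a$. The key observation to establish is that adjacency in the enhanced power graph behaves well under $\pi$: if $(a_1,b_1) \sim (a_2,b_2)$ in $\mathcal{G}_E(G)$, then there is $w = (c,d)$ with $(a_1,b_1),(a_2,b_2) \in \langle (c,d) \rangle$, hence $a_1, a_2 \in \langle c \rangle$, so $a_1 \sim a_2$ in $\mathcal{G}_E(G_1)$ whenever $a_1 \neq a_2$ (and $a_1 = a_2$ is fine too, since then they lie in the same component trivially). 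Thus $\pi$ carries a path in $\mathcal{G}_E(G \setminus (S\times\mathbb{Z}_n))$ to a walk in $\mathcal{G}_E(G_1 \setminus S)$ — here one uses that $\pi$ maps $G \setminus (S \times \mathbb{Z}_n)$ onto $G_1 \setminus S$, which is where $\gcd(|G_1|, n) = 1$ matters to ensure the fibers stay inside the correct vertex set.

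Next I would show that $\pi$ is also "surjective on components from below", i.e. every component $C_j$ of $\mathcal{G}_E(G_1 \setminus S)$ is hit. This is immediate: pick any $a \in C_j$; then $(a, e') \in G \setminus (S \times \mathbb{Z}_n)$ projects to $a$. Combining the two facts: define $\Phi$ sending a component $D$ of $\mathcal{G}_E(G \setminus (S \times \mathbb{Z}_n))$ to the component of $\mathcal{G}_E(G_1 \setminus S)$ containing $\pi(D)$ — this is well-defined because $\pi$ of a connected set is connected (contained in one component) by the adjacency-preservation above, and it is surjective by the previous sentence. Hence the number of components of $\mathcal{G}_E(G \setminus (S \times \mathbb{Z}_n))$ is at least $r$, and in particular if $r \geq 2$ the larger graph is disconnected.

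The step I expect to require the most care is verifying that $\pi$ genuinely maps $G \setminus (S \times \mathbb{Z}_n)$ into (and onto) $G_1 \setminus S$ and that the image of a connected subgraph is connected rather than merely a union of vertices — one must handle the degenerate case where consecutive vertices on a path project to the same element of $G_1$ (then the projected walk has a repeated vertex, not an edge, but that still keeps us inside one component). A clean way to phrase this is: the projection of a path is a walk in which consecutive entries are either equal or adjacent, and any such walk lies in a single connected component. Everything else is bookkeeping with the coprimality hypothesis, which guarantees that cyclic subgroups of $G_1 \times \mathbb{Z}_n$ split as products and that $\langle (c,d) \rangle \cap (G_1 \times \{e'\}) = \langle c \rangle \times \{e'\}$, so no vertex of $S$ sneaks back in under projection.
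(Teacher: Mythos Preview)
Your proposal is correct and is essentially the paper's argument phrased dually: the paper shows directly that the sets $C_i \times \mathbb{Z}_n$ are pairwise disconnected in $\mathcal{G}_E(G \setminus (S \times \mathbb{Z}_n))$ by projecting a hypothetical path between them down to a walk in $\mathcal{G}_E(G_1 \setminus S)$, while you build a surjection from components upstairs to components downstairs via the same projection $\pi$; the underlying mechanism---$\pi$ sends edges to edges (or collapses them) and maps $G \setminus (S\times\mathbb{Z}_n)$ into $G_1\setminus S$---is identical. As a side remark, the coprimality hypothesis $\gcd(|G_1|,n)=1$ is not actually needed in either version of this particular argument: the facts $(a,b)\notin S\times\mathbb{Z}_n \iff a\notin S$ and $\pi(\langle(c,d)\rangle)\subseteq\langle c\rangle$ hold unconditionally, so your caution about it is harmless but unnecessary.
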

\begin{proof}
	Define 
	$f:C(\mathcal{G}_{E}(G_1 \setminus S)) \mapsto C(\mathcal{G}_{E}(G \setminus (S \times \mathbb{Z}_n)))$ by
	$$f(C_i)= C_i \times  \mathbb{Z}_n .$$
	It is enough to show that there is no path in between $C_i \times  \mathbb{Z}_n$ and 
	$C_j \times  \mathbb{Z}_n$ for $1 \leq i < j \leq r.$ Let, there exists 
	an path between $(a_1, b_1)$ and $(a_2, b_2)$ where $a_1 \in C_i$, $a_2 \in C_j$ and $b_1, b_2 \in \mathbb{Z}_n.$ 
	If possible, let $(a_1, b_1) \sim (c_1, d_1) \sim (c_2, d_2) \sim \cdots \sim (c_{m-1}, d_{m-1}) \sim (a_2, b_2)$ in $\mathcal{G}_E(G \setminus (S \times \mathbb{Z}_n) )$ where $c_1, c_2,\dots, c_{m-1} \in G_1$ and $d_1, d_2, \dots,d_{m-1} \in \mathbb{Z}_n$. Then $c_1, c_2, \cdots, c_{m-1}$ must be non-zero elements of $G_1$. This proves that $a_1$ and $a_2$ are connected by a path in $\mathcal{G}_E(G_1 \setminus S)$ which contradicts the fact that $C_i$ and $C_j$ are distinct connected components of $\mathcal{G}_{E} (G_1 \setminus S).$
	The proof is complete.
\end{proof}
We are now in a position to prove the main result of this section. \begin{theorem}\label{improved vc of enhced pwr raph for grnrral abelian grp}	
	Without loss of generality, we can assume that $G$ is non-cyclic abelian group such that
	\begin{eqnarray}\label{eqn:G_form} 
	G  & \cong & \mathbb{Z}_{p^{t_{11}}_1}\times\cdots\times\mathbb{Z}_{p^{t_{1k_1}}_1}\times \mathbb{Z}_{p^{t_{21}}_2}\times\cdots\times\mathbb{Z}_{p^{t_{2k_2}}_2}\times\cdots\times
	\mathbb{Z}_{p^{t_{r1}}_r}\times\cdots\times\mathbb{Z}_{p^{t_{rk_r}}_r} \times  \nonumber \\
	& &  \mathbb{Z}_{p^{t_{r+1 1}}_{r+1}} \times 
	\mathbb{Z}_{p^{t_{r+2 1}}_{r+2}} \times	 \cdots \times  
	\mathbb{Z}_{p^{t_{s 1}}_{s}}. \nonumber
	\end{eqnarray}	
	where $k_i\geq 2 $ and $1\leq t_{i1}\leq t_{i2}\leq\cdots\leq t_{ik_i} $, for all $i\in [r]. $
	We then have, 
	$$\kappa(\mathcal{G}_E(G))\leq p^{t_{r+1 1}}_{r+1}
	p^{t_{r+2 1}}_{r+2} \cdots p^{t_{s 1}}_{s} \left(p_1^{t_{11}}p_2^{t_{21}}\cdots p_r^{t_{r1}}-\phi(p_1^{t_{11}}p_2^{t_{21}}\cdots p_r^{t_{r1}})\right).$$
\end{theorem}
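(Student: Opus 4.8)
The plan is to exhibit an explicit vertex cut of the claimed size and invoke Theorem~\ref{thm:at_least_r_components} to see that its removal disconnects $\mathcal{G}_E(G)$. Write $G_1 \cong \mathbb{Z}_{p^{t_{11}}_1}\times\cdots\times\mathbb{Z}_{p^{t_{1k_1}}_1}\times\cdots\times\mathbb{Z}_{p^{t_{r1}}_r}\times\cdots\times\mathbb{Z}_{p^{t_{rk_r}}_r}$ (the part of $G$ built from primes occurring with multiplicity $\ge 2$), and $\mathbb{Z}_n = \mathbb{Z}_{p^{t_{r+1 1}}_{r+1}}\times\cdots\times\mathbb{Z}_{p^{t_{s1}}_s}$, so that $G \cong G_1\times\mathbb{Z}_n$ with $\gcd(|G_1|,n)=1$. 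The first step is to recall from Theorem~\ref{vc of enhced pwr raph for grnrral abelian grp} the set $S\subseteq V(\mathcal{G}_E(G_1))$ of size $p_1^{t_{11}}\cdots p_r^{t_{r1}}-\phi(p_1^{t_{11}}\cdots p_r^{t_{r1}})$ whose removal from $\mathcal{G}_E(G_1)$ leaves a disconnected graph; concretely $S$ consists of the elements of $G_1$ whose projection to the ``bottom layer'' $\mathbb{Z}_{p^{t_{11}}_1}\times\cdots\times\mathbb{Z}_{p^{t_{r1}}_r}$ is a non-generator, i.e. those $x$ for which $\mathrm{o}(x)$ is divisible by some $p_i$ but $x$ does not generate a subgroup surjecting onto all the bottom-layer factors simultaneously.

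The second step is to set $\widetilde{S} = S\times\mathbb{Z}_n \subseteq V(\mathcal{G}_E(G))$. By Theorem~\ref{thm:at_least_r_components}, since $\mathcal{G}_E(G_1\setminus S)$ is disconnected, $\mathcal{G}_E(G\setminus\widetilde{S})$ is also disconnected; hence $\widetilde{S}$ is a vertex cut of $\mathcal{G}_E(G)$, provided $G\setminus\widetilde{S}$ has at least two vertices, which is clear since $G_1\setminus S$ has at least two vertices and $\mathbb{Z}_n$ is nonempty. Therefore $\kappa(\mathcal{G}_E(G)) \le |\widetilde{S}| = |S|\cdot|\mathbb{Z}_n| = \left(p_1^{t_{11}}\cdots p_r^{t_{r1}}-\phi(p_1^{t_{11}}\cdots p_r^{t_{r1}})\right)\cdot p^{t_{r+1 1}}_{r+1}\cdots p^{t_{s1}}_s$, which is exactly the claimed bound.

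The main thing to be careful about is that the statement of Theorem~\ref{thm:at_least_r_components} requires $S$ to be a subset of $V(\mathcal{G}_E(G))$ with $\mathcal{G}_E(G_1\setminus S)$ disconnected, and the cut $S$ furnished by the proof of Theorem~\ref{vc of enhced pwr raph for grnrral abelian grp} is described there only for the ``single-prime-per-coordinate-group'' decomposition; so I would first spend a sentence verifying that the $S$ used in that earlier proof indeed sits inside $G_1$ and is a genuine vertex cut of $\mathcal{G}_E(G_1)$ realizing the bound $p_1^{t_{11}}\cdots p_r^{t_{r1}}-\phi(p_1^{t_{11}}\cdots p_r^{t_{r1}})$. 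Once that identification is made, the improvement over Theorem~\ref{vc of enhced pwr raph for grnrral abelian grp} is purely bookkeeping: the old bound treated every prime as contributing a factor of the form $p^{t}-\phi$, whereas here the primes appearing with multiplicity one contribute only a multiplicative factor $p^{t_{i1}}$ (not $p^{t_{i1}}-\phi(p^{t_{i1}})$), because the $\mathbb{Z}_n$-part is handled by the product construction of Theorem~\ref{thm:at_least_r_components} rather than by the cut. I expect no genuine obstacle here — the entire content is the combination of the two earlier theorems — so the write-up is short, and the only real task is to state the relabelling of $G$ as $G_1\times\mathbb{Z}_n$ cleanly and to quote the size of $S$ correctly.
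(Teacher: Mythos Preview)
Your proposal is correct and follows essentially the same route as the paper: decompose $G$ as $G_1\times\mathbb{Z}_n$ with $\gcd(|G_1|,n)=1$, take the cut $S\subseteq G_1$ of size $p_1^{t_{11}}\cdots p_r^{t_{r1}}-\phi(p_1^{t_{11}}\cdots p_r^{t_{r1}})$ supplied by Theorem~\ref{vc of enhced pwr raph for grnrral abelian grp}, and apply Theorem~\ref{thm:at_least_r_components} to conclude that $S\times\mathbb{Z}_n$ disconnects $\mathcal{G}_E(G)$. Your extra care about verifying that $S$ genuinely lives in $G_1$ and that at least two vertices remain is fine but not strictly needed, since $S$ is by construction a subset of $G_1$ and the earlier theorem already guarantees $\mathcal{G}_E(G_1\setminus S)$ is disconnected (hence has $\ge 2$ vertices).
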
 
\begin{proof}
	Let, $n= p^{t_{r+1 1}}_{r+1}
	p^{t_{r+2 1}}_{r+2} \cdots p^{t_{s 1}}_{s}$  and \[G_1= \mathbb{Z}_{p^{t_{11}}_1}\times\cdots\times\mathbb{Z}_{p^{t_{1k_1}}_1}\times \mathbb{Z}_{p^{t_{21}}_2}\times\cdots\times\mathbb{Z}_{p^{t_{2k_2}}_2}\times\cdots\times
	\mathbb{Z}_{p^{t_{r1}}_r}\times\cdots\times\mathbb{Z}_{p^{t_{rk}}_r}.\]
	Clearly we have $(|G_1|,n)=1.$ By Theorem \ref{vc of enhced pwr raph for grnrral abelian grp}, there exists a set $S$ of cardinality  $p_1^{t_{11}}p_2^{t_{21}}\cdots p_r^{t_{r1}}-\phi(p_1^{t_{11}}p_2^{t_{21}}\cdots p_r^{t_{r1}})$ such that $G_1 \setminus S$ is disconnected. Hence, by Theorem \ref{thm:at_least_r_components}, $\mathcal{G}_{E}(G \setminus (S \times \mathbb{Z}_n))$ is disconnected. Moreover, the cardinality of $|S\times \mathbb{Z}_n|$ is $$|S\times \mathbb{Z}_n|=  p^{t_{r+1 1}}_{r+1}p^{t_{r+2 1}}_{r+2} \cdots p^{t_{s 1}}_{s} |S|$$
	The proof is complete. 
\end{proof}
Let $G$ be a non-cyclic abelian group such that
\begin{eqnarray}\label{eqn:G_form_2} 
G  & \cong & \mathbb{Z}_{p^{t_{11}}_1}\times\cdots\times\mathbb{Z}_{p^{t_{1k_1}}_1}\times \mathbb{Z}_{p^{t_{21}}_2}\times\cdots\times\mathbb{Z}_{p^{t_{2k_2}}_2}\times\cdots\times
\mathbb{Z}_{p^{t_{r1}}_r}\times\cdots\times\mathbb{Z}_{p^{t_{rk_r}}_r} \times  \nonumber \\
& &  \mathbb{Z}_{p^{t_{r+1 1}}_{r+1}} \times 
\mathbb{Z}_{p^{t_{r+2 1}}_{r+2}} \times	 \cdots \times  
\mathbb{Z}_{p^{t_{s 1}}_{s}}. \nonumber
\end{eqnarray}	
and $\alpha(G)$ and $\beta(G)$ be the bounds for $\kappa(\mathcal{G}_E(G))$ in Theorem \ref{vc of enhced pwr raph for grnrral abelian grp} and Theorem \ref{improved vc of enhced pwr raph for grnrral abelian grp} respectively. That is, 
$$\alpha(G)=p_1^{t_{11}}p_2^{t_{21}}\cdots p_r^{t_{r1}}p^{t_{r+1 1}}_{r+1}
p^{t_{r+2 1}}_{r+2} \cdots p^{t_{s 1}}_{s}-\phi(p_1^{t_{11}}p_2^{t_{21}}\cdots p_r^{t_{r1}}p^{t_{r+1 1}}_{r+1}p^{t_{r+2 1}}_{r+2} \cdots p^{t_{s 1}}_{s}) $$ and 
$$\beta(G)= p^{t_{r+1 1}}_{r+1}
p^{t_{r+2 1}}_{r+2} \cdots p^{t_{s 1}}_{s} \left(p_1^{t_{11}}p_2^{t_{21}}\cdots p_r^{t_{r1}}-\phi(p_1^{t_{11}}p_2^{t_{21}}\cdots p_r^{t_{r1}})\right). $$

We at first prove that for any abelian group $G$, the bound $\beta(G)$ is better.
\begin{lemma}
	\label{lem:comparison_of_bounds}
	For all non-cyclic abelian groups $G$, we have $\beta(G) \leq \alpha(G)$. Moreover, equality happens if and only if $G$ do not have any cyclic sylow $p$-subgroup. 
\end{lemma}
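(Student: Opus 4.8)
The plan is to work directly with the two explicit expressions for $\alpha(G)$ and $\beta(G)$ and reduce the inequality $\beta(G)\le\alpha(G)$ to a statement about the Euler totient function alone. Write $a=p_1^{t_{11}}p_2^{t_{21}}\cdots p_r^{t_{r1}}$ for the ``repeated-prime'' part and $n=p^{t_{r+11}}_{r+1}p^{t_{r+21}}_{r+2}\cdots p^{t_{s1}}_{s}$ for the ``cyclic'' part, so that $\gcd(a,n)=1$. Then $\alpha(G)=an-\phi(an)$ and $\beta(G)=n(a-\phi(a))=na-n\phi(a)$. Since $\phi$ is multiplicative and $\gcd(a,n)=1$, we have $\phi(an)=\phi(a)\phi(n)$, so the desired inequality $\beta(G)\le\alpha(G)$ becomes, after cancelling $na$ from both sides, simply $-n\phi(a)\le-\phi(a)\phi(n)$, i.e. $\phi(a)\phi(n)\le n\phi(a)$, i.e. $\phi(n)\le n$. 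This last inequality is immediate, so $\beta(G)\le\alpha(G)$ always holds.

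For the equality characterization, I would trace back through the same computation: equality in $\beta(G)\le\alpha(G)$ holds if and only if $\phi(a)\phi(n)=n\phi(a)$, i.e. if and only if $\phi(a)(n-\phi(n))=0$. Since $\phi(a)\ge 1$ always, this forces $n=\phi(n)$, which happens precisely when $n=1$ (as $\phi(n)<n$ for every integer $n\ge 2$). Now $n=1$ means there are no primes $p_{r+1},\dots,p_s$ appearing, i.e. every prime dividing $|G|$ occurs in the ``repeated'' block, which by the normalization $k_i\ge 2$ for $i\in[r]$ is exactly the condition that no Sylow subgroup of $G$ is cyclic. Conversely, if $G$ has no cyclic Sylow subgroup then in the decomposition \eqref{eqn:G_form_2} there is no trailing cyclic part, so $n=1$ and $\alpha(G)=\beta(G)$.

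There is essentially no obstacle here; the only point requiring a word of care is the bookkeeping that identifies ``$n=1$'' with ``no cyclic Sylow subgroup,'' which is a direct consequence of the chosen normal form for $G$ in \eqref{eqn:G_form_2}: a Sylow $p_i$-subgroup is cyclic exactly when $p_i$ contributes a single cyclic factor, and such primes are precisely those collected into $n$. I would state this identification explicitly and then the two displayed algebraic identities carry the rest of the argument.
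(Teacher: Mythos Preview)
Your proof is correct and follows essentially the same route as the paper: both reduce $\beta(G)\le\alpha(G)$ to a single totient inequality by letting the two blocks of primes play the roles of coprime integers. The paper phrases it as $a\phi(b)\ge\phi(ab)$ with equality iff $a=1$ (taking $a$ to be the cyclic part and $b$ the repeated-prime part), whereas you first invoke multiplicativity of $\phi$ via $\gcd(a,n)=1$ and then use $\phi(n)\le n$; your version is in fact slightly more explicit about why coprimality is needed for the equality characterization.
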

\begin{proof}
	For any two positive integers $a$ and $b$, we have $a\phi(b) \geq \phi (ab)$ and equality happens if and only if $a=1$. Now set $a= p^{t_{r+1 1}}_{r+1}
	p^{t_{r+2 1}}_{r+2} \cdots p^{t_{s 1}}_{s}$ and $b=  p_1^{t_{11}}p_2^{t_{21}}\cdots p_r^{t_{r1}}$ and the proof is complete.  
\end{proof}
Looking at Lemma \ref{lem:comparison_of_bounds}, one may think that the improved bound $\beta(G)$ is not a much better bound than the previously known bound $\alpha(G)$ but in the next remark we show that the differences can be much larger.  
\begin{rem}
	\label{rem:vc_enhcd_why_good}
	Theorem \ref{improved vc of enhced pwr raph for grnrral abelian grp} clearly gives a much a better bound than Theorem \ref{vc of enhced pwr raph for grnrral abelian grp} when $G$ has atleast one sylow $p$-subgroup. As an instance, one can take $$G= \mathbb{Z}_3 \times \mathbb{Z}_9 \times  \mathbb{Z}_5 \times \mathbb{Z}_{25}  \times \mathbb{Z}_7 \times  \mathbb{Z}_{49} \times  \mathbb{Z}_{13}.$$ In this case, we have 
	$$\alpha(G)=3.5.7.13- \phi (3.5.7.13)= 789 \hspace{3 mm} \mbox{and} \hspace{3 mm} \beta(G)= 3.5.7 (13 - \phi(13))= 105. $$
	Indeed, if we take 
	$$G= \mathbb{Z}_3 \times \mathbb{Z}_9 \times  \mathbb{Z}_5 \times \mathbb{Z}_{25}  \times \mathbb{Z}_7 \times  \mathbb{Z}_{49} \times  \mathbb{Z}_{p}$$ where $p$ is a large prime, then $\alpha(G) = 3.5.7.p - \phi (3.5.7.p) $ which is very very large, where as $\beta(G)$ remains the same. 
\end{rem}
In this place, we want to give the exact value of vertex connectivity  of enhanced power graphs of some particular nilpotent groups.    
\begin{theorem}\label{thm: nilpotent no sylow subgroups cyclic quaternion vc=1 iff}
Let $G_1$ be a nilpotent group having no sylow subgroups which are either cyclic or generalized quaternion. Then $\kappa(\mathcal{G}_E(G_1))=1$ if and only if $G_1$ is $p$-group.	
\end{theorem}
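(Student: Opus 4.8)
The plan is to prove the two implications separately, the forward one being immediate and the converse one reducing to the already-stated connectivity dichotomy for $\mathcal{G}_E^{**}$. \emph{Forward direction.} If $G_1$ is a $p$-group, then by hypothesis it is neither cyclic nor generalized quaternion, so Lemma~\ref{vc=1 for abln and non abln group} gives $\kappa(\mathcal{G}_E(G_1))=1$ with nothing further to prove. \emph{Converse direction.} Here I argue the contrapositive: write $G_1\cong P_1\times\cdots\times P_r$ with each $P_i$ a Sylow $p_i$-subgroup that is neither cyclic nor generalized quaternion, assume $G_1$ is not a $p$-group so that $r\geq 2$, and show $\kappa(\mathcal{G}_E(G_1))\geq 2$.

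First note that $\mathcal{G}_E(G_1)$ is connected because the identity $e$ is a dominating vertex, and it is not complete by Lemma~\ref{enhd coplte iff cyclic} since $G_1$ is non-cyclic; hence $\kappa(\mathcal{G}_E(G_1))$ is a well-defined positive integer and it suffices to rule out the value $1$, i.e.\ to show that $\mathcal{G}_E(G_1)$ has no cut vertex. Suppose, for contradiction, that some vertex $v$ is a cut vertex of $\mathcal{G}_E(G_1)$. If $v\neq e$, then $e$ still lies in $\mathcal{G}_E(G_1)\setminus\{v\}$ and remains adjacent to every other vertex there, so $\mathcal{G}_E(G_1)\setminus\{v\}$ is connected, a contradiction. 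Hence $v=e$. But by Theorem~\ref{Thm: G' nilpotent dominatability iff} the identity is the \emph{only} dominating vertex of $\mathcal{G}_E(G_1)$, so $\mathcal{G}_E(G_1)\setminus\{e\}=\mathcal{G}_E^{**}(G_1)$; and Theorem~\ref{Thm:no sylow subgroups quaternion, ** connected iff}, applied with $G=G_1$, says that $\mathcal{G}_E^{**}(G_1)$ is disconnected only when $G_1$ is a $p$-group. Since $r\geq 2$, it is connected, contradicting the choice of $v$. Therefore $\mathcal{G}_E(G_1)$ has no cut vertex and $\kappa(\mathcal{G}_E(G_1))\geq 2$, which is exactly what the contrapositive requires.

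The only real content above is the connectivity of $\mathcal{G}_E^{**}(G_1)$ when $r\geq 2$, which Theorem~\ref{Thm:no sylow subgroups quaternion, ** connected iff} provides; if one prefers a self-contained route, it can be obtained directly. Given a non-identity $g=(g_1,\dots,g_r)$, choose $i$ with $g_i\neq e$; since the orders $\text{o}(g_1),\dots,\text{o}(g_r)$ are powers of distinct primes, the Chinese Remainder Theorem yields an integer $k$ with $g^k=(e,\dots,g_i,\dots,e)$, so $g$ is adjacent in $\mathcal{G}_E^{**}(G_1)$ to this ``pure'' element, which is in turn adjacent, inside its own cyclic group, to an element $(e,\dots,h_i,\dots,e)$ with $\text{o}(h_i)=p_i$. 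Fixing once and for all an element $h^{*}\in P_1$ of order $p_1$ and putting $u=(h^{*},e,\dots,e)$, Lemma~\ref{lemma:any_odtwo_commuting_elements_of_prime_order_adjacent} connects every such pure prime-order element to $u$: directly when the nontrivial coordinate is not the first, and via an auxiliary element $(e,y,e,\dots,e)$ with $y\in P_2$ of order $p_2$ otherwise --- this detour being exactly where the hypothesis $r\geq 2$ enters (for $r=1$ one instead uses Lemma~\ref{lema: p and p^i orderd path connd abelong< b>} to see that distinct order-$p$ subgroups lie in distinct components, so that $\kappa=1$). Thus $\mathcal{G}_E^{**}(G_1)$ is connected, and the main obstacle is precisely this connectivity statement; once it is granted, the remainder of the proof is just the elementary remark that a graph possessing $e$ as a dominating vertex cannot be disconnected by deleting any single vertex other than $e$.
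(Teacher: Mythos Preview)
Your forward direction and your self-contained paragraph are both fine, but the main line of your converse argument is circular within this paper. You invoke Theorem~\ref{Thm:no sylow subgroups quaternion, ** connected iff} to conclude that $\mathcal{G}_E^{**}(G_1)$ is connected when $G_1$ is not a $p$-group; however, in the paper the proof of Theorem~\ref{Thm:no sylow subgroups quaternion, ** connected iff} reads, verbatim, ``follows from Theorem~\ref{thm: nilpotent no sylow subgroups cyclic quaternion vc=1 iff} and Corollary~\ref{cor: Proper enh of G times cyclic is disconnected iff}.'' So Theorem~\ref{Thm:no sylow subgroups quaternion, ** connected iff} is a \emph{consequence} of the result you are proving, and cannot be quoted here. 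The reduction ``no cut vertex other than $e$ because $e$ dominates; and removing $e$ gives $\mathcal{G}_E^{**}(G_1)$ by Theorem~\ref{Thm: G' nilpotent dominatability iff}'' is perfectly sound, but the remaining step --- connectivity of $\mathcal{G}_E^{**}(G_1)$ for $r\ge 2$ --- must be established directly, not by forward reference.

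Your ``self-contained route'' does exactly this and is correct; it is in fact the same argument as the paper's, just organized around a fixed hub vertex $u$ rather than as a two-case analysis between an arbitrary pair $a,b$. The paper takes $a,b\in G_1\setminus\{e\}$, and either finds primes $p_i\mid\text{o}(a)$, $p_j\mid\text{o}(b)$ with $i\neq j$ (giving $a\sim a'\sim b'\sim b$ via Lemma~\ref{lemma:any_odtwo_commuting_elements_of_prime_order_adjacent}), or, when both orders are powers of the same $p_\ell$, routes through an element of order $p_i$ with $i\neq\ell$. Promote your last paragraph to the body of the proof and drop the appeal to Theorem~\ref{Thm:no sylow subgroups quaternion, ** connected iff}, and the argument is complete.
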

\begin{proof}
	Let $G_1$ be $p$-group such that $G_1$ is neither cyclic nor generalized quaternion. Then, by Theorem \ref{vc=1 for abln and non abln group}, $\kappa(\mathcal{G}_E(G_1))=1.$
	
	For the converse part, let $G_1$ be a finite group which is not a $p$-group. Let, $p_1, p_2, \cdots, p_r (r\geq 2)$ be the distinct prime factors of $|G_1|.$ Let, $a, b \in G_1$ and $\text{o}(a)= p_1^{k_1} p_2^{k_2} \cdots p_r^{k_r}$
	and $\text{o}(b)=p_1^{s_1} p_2^{s_2} \cdots p_r^{s_r}.$ We consider the following cases: \\
	Case 1: There exists distinct $i$ and $j$  with 
	$k_i \neq 0$ and $s_j \neq 0.$ So, we have $a', b'\in G_1$ such that $\text{o}(a')=p_i, \text{o}(b')=p_j$ and $a'\in \langle a\rangle$ and $b'\in \langle b\rangle.$ As $G_1$ nilpotent, we have $a'b'=b'a'$ (as $p_i\neq p_j).$ Therefore, by Lemma \ref{lemma:any_odtwo_commuting_elements_of_prime_order_adjacent}, $a'\sim b'.$ As a result, we get a path $a\sim a'\sim b'\sim b.$ 
	That is, there exists a path of length $3$ between $a$ and $b.$ 
	We observe that this case takes care of everything except 
	when both $\text{o}(a)$ and $\text{o}(b)$ are power of the same prime $p_{\ell}$ for some $\ell \in [r].$ We next consider this case: 
	
	Case 2: Let $\text{o}(a)=p_{\ell}^{k_{\ell}}$ and $\text{o}(b)=p_{\ell}^{s_{\ell}}.$  Let, $c$ 
	be an element of order $p_i$ in $G_1$ with $i \neq \ell$ (as $r\geq 2).$ Then by Lemma \ref{lemma:any_odtwo_commuting_elements_of_prime_order_adjacent},
	we have $a \sim c \sim b$. 
	Thus, $\mathcal{G}_E^*(G_1)$ is connected. This completes the proof.
\end{proof}

\begin{corollary}\label{rem:diam_leq_3_when_nonpgroup}
	Let $G_1$ be a nilpotent non $p$-group having no sylow subgroups which are either cyclic or generalized quaternion. Then, $\text{diam}(\mathcal{G}_E^{**}(G_1)) \leq 3.$ 
\end{corollary}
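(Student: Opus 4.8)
The plan is to deduce Corollary \ref{rem:diam_leq_3_when_nonpgroup} directly from the connectivity argument already carried out in the proof of Theorem \ref{thm: nilpotent no sylow subgroups cyclic quaternion vc=1 iff}. First I would observe that by Theorem \ref{Thm: G' nilpotent dominatability iff}, since no sylow subgroup of $G_1$ is cyclic or generalized quaternion, $\text{Dom}(\mathcal{G}_E(G_1)) = \{e\}$, so $\mathcal{G}_E^{**}(G_1)$ is obtained from $\mathcal{G}_E(G_1)$ by deleting only the identity; equivalently $\mathcal{G}_E^{**}(G_1) = \mathcal{G}_E^*(G_1)$ in the notation used in the proof above. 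Thus it suffices to bound $\text{diam}(\mathcal{G}_E^*(G_1))$ by $3$.

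Next I would simply re-examine the two cases in the proof of Theorem \ref{thm: nilpotent no sylow subgroups cyclic quaternion vc=1 iff}, now tracking path lengths rather than mere existence of paths. Let $a, b$ be any two distinct vertices of $\mathcal{G}_E^*(G_1)$. In Case 1 (there are distinct primes $p_i, p_j$ with $p_i \mid \text{o}(a)$ and $p_j \mid \text{o}(b)$), one picks $a' \in \langle a \rangle$ of order $p_i$ and $b' \in \langle b \rangle$ of order $p_j$; since $G_1$ is nilpotent and $p_i \neq p_j$, $a'$ and $b'$ commute, so by Lemma \ref{lemma:any_odtwo_commuting_elements_of_prime_order_adjacent} we get the path $a \sim a' \sim b' \sim b$ of length at most $3$. (If $a' = a$ or $b' = b$ the path only gets shorter.) In Case 2 ($\text{o}(a)$ and $\text{o}(b)$ are both powers of the same prime $p_\ell$), choosing $c$ of order $p_i$ with $i \neq \ell$ — which exists because $r \geq 2$ — gives $a \sim c \sim b$ by the same lemma, a path of length at most $2$. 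In every case $d(a,b) \leq 3$, hence $\text{diam}(\mathcal{G}_E^{**}(G_1)) \leq 3$.

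There is essentially no obstacle here: the corollary is a bookkeeping refinement of the preceding theorem's proof, the only point requiring a sentence of care being the identification $\mathcal{G}_E^{**}(G_1) = \mathcal{G}_E^*(G_1)$, which is exactly the content of Theorem \ref{Thm: G' nilpotent dominatability iff}. One could alternatively phrase the whole thing as: the proof of Theorem \ref{thm: nilpotent no sylow subgroups cyclic quaternion vc=1 iff} in fact exhibits, for every pair of non-identity vertices, a path of length at most $3$, and since the identity is the unique dominating vertex the claim follows. I would write the proof in two or three sentences along these lines rather than repeating the case analysis in full.
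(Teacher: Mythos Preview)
Your proposal is correct and matches the paper's approach exactly: the corollary is stated without proof in the paper, immediately following Theorem \ref{thm: nilpotent no sylow subgroups cyclic quaternion vc=1 iff}, and is meant to be read off from that proof, where Case 1 explicitly notes ``there exists a path of length $3$ between $a$ and $b$'' and Case 2 gives a path $a \sim c \sim b$ of length $2$. Your additional remark that $\mathcal{G}_E^{**}(G_1) = \mathcal{G}_E^*(G_1)$ via Theorem \ref{Thm: G' nilpotent dominatability iff} is the one point the paper leaves implicit, and it is worth stating.
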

\begin{theorem}\label{iff thm:vc=n for G times cyclic group}
Let $G_1$ be a finite nilpotent group having no sylow subgroups which are either cyclic or generalized quaternion. Let for $n\in \mathbb{N}, \text{gcd}(|G_1|, n)=1.$ Then $\kappa(\mathcal{G}_E(G_1\times\mathbb{Z}_n))=n$ if and only if $G_1$ is $p$-group.
\end{theorem}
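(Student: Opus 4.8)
The plan is to prove both directions by combining the structural facts already established. For the forward-ish direction (the ``if''), suppose $G_1$ is a $p$-group which is neither cyclic nor generalized quaternion. By Theorem \ref{vc=1 for abln and non abln group}, $\kappa(\mathcal{G}_E(G_1))=1$, so there is a single vertex $v$ whose removal disconnects $\mathcal{G}_E(G_1)$; in fact, reading off the proof of Lemma \ref{lema: p and p^i orderd path connd abelong< b>} and Theorem \ref{thm: nilpotent no sylow subgroups cyclic quaternion vc=1 iff}, the cut vertex can be taken to be the identity and the components of $\mathcal{G}_E^{*}(G_1)$ are governed by the distinct cyclic subgroups of order $p$. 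Then I would invoke Theorem \ref{thm:at_least_r_components} with $S=\{e\}$ (using $\gcd(|G_1|,n)=1$): since $\mathcal{G}_E(G_1\setminus\{e\})$ is disconnected, $\mathcal{G}_E(G\setminus(\{e\}\times\mathbb{Z}_n))$ is disconnected, and $|\{e\}\times\mathbb{Z}_n|=n$, so $\kappa(\mathcal{G}_E(G))\le n$. For the reverse inequality $\kappa(\mathcal{G}_E(G))\ge n$, note that by Theorem \ref{Thm: Dom set for nilpotent grp} the dominating set of $\mathcal{G}_E(G_1\times\mathbb{Z}_n)$ is exactly $\{(e,x):x\in\mathbb{Z}_n\}$, a set of size $n$; a standard fact is that every dominating vertex lies in every vertex cut of a non-complete connected graph (removing fewer than all dominating vertices leaves a dominating vertex, which keeps the graph connected), so $\kappa\ge n$. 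Hence $\kappa(\mathcal{G}_E(G))=n$.

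For the converse (``only if''), suppose $G_1$ is \emph{not} a $p$-group, so $|G_1|$ has at least two prime divisors; I want to show $\kappa(\mathcal{G}_E(G_1\times\mathbb{Z}_n))>n$, i.e.\ that no cut set of size $n$ exists. The idea is that the dominating set has size exactly $n$ here as well (again Theorem \ref{Thm: Dom set for nilpotent grp}), so a cut set of size $n$ would have to be \emph{precisely} $D:=\{(e,x):x\in\mathbb{Z}_n\}$; thus it suffices to show that $\mathcal{G}_E((G_1\times\mathbb{Z}_n)\setminus D)=\mathcal{G}_E^{**}(G_1\times\mathbb{Z}_n)$ is connected. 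But $\mathcal{G}_E^{**}(G_1\times\mathbb{Z}_n)$ is exactly the proper enhanced power graph, and by Theorem \ref{Thm:no sylow subgroups quaternion, ** connected iff} it is connected precisely because $G_1$ is not a $p$-group. So the converse reduces entirely to Theorem \ref{Thm:no sylow subgroups quaternion, ** connected iff} together with the observation that the unique cut set of minimum size $n$ (if it existed) is forced to be $D$.

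The one technical point I would need to nail down carefully is the claim that \emph{any} vertex cut of size exactly $n$ must equal $D$. This uses: (i) $D$ consists of $n$ dominating vertices and every vertex cut contains all dominating vertices (since a graph retaining at least one dominating vertex is connected, provided it has $\ge 2$ vertices); hence any cut set $S$ has $|S\cap D^c|=|S|-n$; (ii) if $|S|=n$ then $S\subseteq D$, and comparing cardinalities gives $S=D$. Then removing $S=D$ leaves $\mathcal{G}_E^{**}(G)$, which by Theorem \ref{Thm:no sylow subgroups quaternion, ** connected iff} is connected when $G_1$ is not a $p$-group — contradiction. So $\kappa(\mathcal{G}_E(G))\ge n+1>n$ in that case.

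The main obstacle is really just bookkeeping: making sure the identification $\mathcal{G}_E(G\setminus D)=\mathcal{G}_E^{**}(G)$ is clean (this is immediate from Theorem \ref{Thm: Dom set for nilpotent grp}) and that the ``every cut set contains every dominating vertex'' lemma is stated and applied correctly; everything else is a direct appeal to Theorems \ref{thm:at_least_r_components}, \ref{Thm:no sylow subgroups quaternion, ** connected iff}, \ref{Thm: Dom set for nilpotent grp} and Lemma \ref{vc=1 for abln and non abln group}. I expect no genuinely hard step — the content has been front-loaded into the disconnectedness/connectedness theorems for $\mathcal{G}_E^{**}$.
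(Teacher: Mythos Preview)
Your ``if'' direction is clean and essentially a streamlined version of the paper's argument: the paper also gets $\kappa\ge n$ from the dominating set, but for $\kappa\le n$ it gives a direct path argument showing $\mathcal{G}_E^{**}(G_1\times\mathbb{Z}_n)$ is disconnected, whereas you factor through Theorem~\ref{thm:at_least_r_components} and Lemma~\ref{vc=1 for abln and non abln group}. One caveat: Theorem~\ref{thm:at_least_r_components} is stated only for \emph{abelian} $G$, so you should remark that its proof goes through verbatim for arbitrary $G_1$ (it only uses that an adjacency in $G_1\times\mathbb{Z}_n$ projects to an adjacency in $G_1$).

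The ``only if'' direction, however, is circular as written. You reduce to showing $\mathcal{G}_E^{**}(G_1\times\mathbb{Z}_n)$ is connected when $G_1$ is not a $p$-group and then invoke Theorem~\ref{Thm:no sylow subgroups quaternion, ** connected iff}. But in the paper's logical structure, Theorem~\ref{Thm:no sylow subgroups quaternion, ** connected iff} is \emph{deduced} from Corollary~\ref{cor: Proper enh of G times cyclic is disconnected iff }, which in turn is deduced from Theorem~\ref{iff thm:vc=n for G times cyclic group} itself. So the connectedness of $\mathcal{G}_E^{**}(G_1\times\mathbb{Z}_n)$ in the non-$p$-group case is precisely the content that must be established here; you cannot cite it as already known. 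The paper proves it directly at this point by a short case analysis: given two vertices $v',v''$, one uses nilpotency to find elements of coprime prime orders commuting with each, and links them via Lemma~\ref{lemma:any_odtwo_commuting_elements_of_prime_order_adjacent}. Your reduction ``any cut set of size $n$ must equal $D$'' is correct and matches the paper's implicit reasoning, but you still owe the connectedness argument for $\mathcal{G}_E^{**}(G_1\times\mathbb{Z}_n)$ rather than pointing back to a theorem that depends on the present one.
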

\begin{proof}
Let $G_1$ be a finite $p$-group such that $G_1$ is neither cyclic nor generalized quaternion group and $\text{gcd}(|G_1|, n)=1.$ We show that $\kappa(\mathcal{G}_E(G_1\times\mathbb{Z}_n))=n.$ Let $D=\{(e, x): e \text{ is the identity of } G_1 \text{ and } x\in \mathbb{Z}_n\}.$ Then by Proposition \ref{Prop: all dom of G times Zn}, $\text{Dom}(\mathcal{G}_E(G_1\times\mathbb{Z}_n))=D.$	
Now clearly to disconnect the graph we have to delete all vertices in $D.$ So, $\kappa(\mathcal{G}_E(G_1\times\mathbb{Z}_n))\geq n.$ 
Let $(g', e'), (g'', e')\in V(\mathcal{G}^{**}_E(G_1\times\mathbb{Z}_n))$ such that $\text{o}((g', e'))=\text{o}((g'', e'))=p$ and $\langle (g', e')\rangle\neq \langle (g'', e')\rangle.$	
Now we show that there is no path between the vertices $(g', e')$ and $(g'', e')$ in $\mathcal{G}^{**}_E(G_1\times\mathbb{Z}_n).$ Let $\mathcal{P}$ be a path $v_0\sim v_1\sim\cdots\sim v_{k-1}\sim v_k$ in $\mathcal{G}^{**}_E(G_1\times\mathbb{Z}_n),$ where $v_0=(g', e')$ and $v_k=(g'', e').$ Note that $v_0\sim v_1.$ So, there exists $v_1'\in G_1\times\mathbb{Z}_n$ such that $v_0, v_1\in \langle v_1'\rangle.$ Now, $v_0\in \langle v_1'\rangle$ and $\langle v_1'\rangle$ is cyclic. So, $\langle v_0\rangle$ is the unique cyclic subgroup of order $p$ of $\langle v_1'\rangle.$ Again, $v_1\in \langle v_1'\rangle$ and $p$ divides $\text{o}(v_1)$ (Note that $p$ divides the order of each vertex in the graph $\mathcal{G}^{**}_E(G_1\times\mathbb{Z}_n).$ Therefore, $p$ divides the order of each vertex in the path $\mathcal{P}).$ As a result, $v_0\in \langle v_1\rangle.$    
Again, $v_1\sim v_2$ and $v_0\in \langle v_1\rangle$ (already we have shown). Now, proceeding exactly same way as above, it can be shown that $v_0\in \langle v_2\rangle.$ Continuing this way, we  show that $v_0\in \langle v_{k-1}\rangle.$ Again, $v_{k-1}\sim v_k.$ In a similar manner, it can be shown that $v_k\in \langle v_{k-1}\rangle.$ As a result $\langle v_0\rangle=\langle v_k\rangle$ (as $\langle v_{k-1}\rangle$ is cyclic and it has a unique subgroup of order $p)$. But first we choose $v_0, v_k$ such that $\langle
v_0\rangle\neq \langle v_k\rangle.$ which is a contradiction. Hence the graph $\mathcal{G}^{**}_E(G_1\times\mathbb{Z}_n)$ is disconnected and $\kappa(\mathcal{G}^{**}_E(G_1\times\mathbb{Z}_n))=n.$ 
	
Conversely, let $|G_1|=p_1^{\alpha_1}\times\cdots\times p_t^{\alpha_t},$ where $\alpha_i\geq 1, t\geq 2$ and $p_1, \cdots, p_t$ are primes such that $p_i\neq p_j,$ for $i\neq j.$ Here we show that the graph $\mathcal{G}^{**}_E(G_1\times\mathbb{Z}_n)$ is connected. Let $v\in V(\mathcal{G}^{**}_E(G_1\times\mathbb{Z}_n))$ and we consider the set \[\text{Div}(v)=\{p_i: p_i \text{ is a prime divisor of both  }\text{o}(v) \text{ and } |G_1|\}.\] Clearly for each $p_i\in \text{Div}(v), p_i$ does not divide $n$ (as $\text{gcd}(|G_1|, n)=1).$  Let $v', v''$ be two arbitrary vertices of $\mathcal{G}^{**}_E(G_1\times\mathbb{Z}_n).$ We split the proof of this part into two cases.  
	
Case 1: First suppose that at least one of $\text{Div}(v'), \text{Div}(v'')$ has cardinality greater than equal to $2.$ With out loss of generality we assume that $|\text{Div}(v')|\geq 2.$ So we can choose $p_{i}\in \text{Div}(v')$ and $p_{j}\in \text{Div}(v'')$ such that $p_i\neq p_j.$ Let $v_1, v_2\in V(\mathcal{G}^{**}_E(G_1\times\mathbb{Z}_n))$ such that $\text{o}(v_1)=p_i, \text{o}(v_2)=p_j$ and $v_1\in \langle v'\rangle, v_2\in \langle v''\rangle.$ As a result $v'
\sim v_1$ and $v''\sim v_2.$ Again, $G_1\times \mathbb{Z}_n$ is nilpotent, so $v_1v_2=v_2v_1$ and $\text{gcd}(p_i, p_j)=1.$ Therefore by Lemma \ref{lemma:any_odtwo_commuting_elements_of_prime_order_adjacent}, $v_1\sim v_2$ in $\mathcal{G}^{**}_E(G_1\times\mathbb{Z}_n).$ Consequently, we get a path $v'\sim v_1\sim v_2\sim v''$ in $\mathcal{G}^{**}_E(G_1\times\mathbb{Z}_n).$
	
Case 2: Let $|\text{Div}(v')|=1$ and $ |\text{Div}(v'')|=1.$ 
Suppose that $p_i\in \text{Div}(v') \cap  \text{Div}(v'').$  This implies that $\text{o}(v')=p_i^{\ell_i}k_1$ and 
$\text{o}(v'')=p_i^{s_i}k_2$ for some positive integers $k_1, k_2$ dividing $n.$
Since $G_1$ is non $p$-group, then there exists $v_5 \in V(\mathcal{G}^{**}_E(G_1\times\mathbb{Z}_n))$ such that $\text{o}(v_5)=p_j$ with $p_i\neq p_j$ and $\text{gcd}(p_j,n)=1.$ Thus, $\text{gcd}(p_j, \text{o}(v')) =1 = 
\text{gcd}(p_j, \text{o}(v'')).$ As $G_1$ is nilpotent, any two elements of coprime order commute. Hence, the elements $v'$ and $v_5$ commute in 
$\mathcal{G}_E^{**}(G_1 \times \mathbb{Z}_n).$ Similarly, $v''$ and $v_5$ also commute. 
 So by Lemma \ref{lemma:any_odtwo_commuting_elements_of_prime_order_adjacent}, $v' \sim v_5 \sim v''$ 
 in $\mathcal{G}^{**}_E(G_1\times\mathbb{Z}_n).$ Consequently, $\mathcal{G}^{**}_E(G_1\times\mathbb{Z}_n)$ is connected and hence $\kappa(\mathcal{G}_E(G_1\times\mathbb{Z}_n))>n.$          \end{proof}
\begin{corollary}\label{cor: Proper enh of G times cyclic is disconnected iff }
Let $G_1$ be a finite nilpotent group having no sylow subgroups which are either cyclic or generalized quaternion. Suppose for $n\in \mathbb{N},  \text{gcd}(|G_1|, n)=1.$ Then $\mathcal{G}^{**}_E(G_1\times\mathbb{Z}_n)$ disconnected if and only if $G_1$ is $p$-group.
\end{corollary}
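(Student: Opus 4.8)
The statement to prove is Corollary \ref{cor: Proper enh of G times cyclic is disconnected iff }, which asserts that for a finite nilpotent group $G_1$ with no cyclic or generalized quaternion Sylow subgroups and $\gcd(|G_1|,n)=1$, the proper enhanced power graph $\mathcal{G}^{**}_E(G_1\times\mathbb{Z}_n)$ is disconnected if and only if $G_1$ is a $p$-group.

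The plan is to derive this directly from Theorem \ref{iff thm:vc=n for G times cyclic group}, which I may assume. First I would recall from Proposition \ref{Prop: all dom of G times Zn} that $\mathrm{Dom}(\mathcal{G}_E(G_1\times\mathbb{Z}_n)) = D$, where $D=\{(e,x): x\in\mathbb{Z}_n\}$ has cardinality exactly $n$; hence $\mathcal{G}^{**}_E(G_1\times\mathbb{Z}_n)$ is obtained from $\mathcal{G}_E(G_1\times\mathbb{Z}_n)$ by deleting precisely these $n$ dominating vertices. Since $G_1$ is neither cyclic (as it has no cyclic Sylow subgroups and is a nontrivial product) — and in any case $G_1\times\mathbb{Z}_n$ is non-cyclic because $G_1$ is non-cyclic — Lemma \ref{enhd coplte iff cyclic} tells us $\mathcal{G}_E(G_1\times\mathbb{Z}_n)$ is not complete, so it is not itself disconnected (it has a dominating vertex, namely $(e,e')$), and the only way to disconnect it by vertex deletion that is relevant here is through the set $D$ of dominating vertices.

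The key observation is the following general principle: deleting the full set of dominating vertices is the \emph{cheapest} way to disconnect a graph that has dominating vertices, in the sense that any disconnecting set must contain every dominating vertex (a dominating vertex is adjacent to all others, so it cannot lie in, nor be separated from, any component after deletion of a set not containing it). Thus $\mathcal{G}^{**}_E(G_1\times\mathbb{Z}_n)$ is disconnected if and only if $\kappa(\mathcal{G}_E(G_1\times\mathbb{Z}_n)) = |D| = n$, i.e. the vertex connectivity equals $n$. (If $\kappa > n$, removing $D$ leaves a connected graph; if the graph could be disconnected by removing fewer than $n$ vertices, that contradicts $D\subseteq$ every disconnecting set; and $\kappa$ cannot exceed $n$ unless removing $D$ fails to disconnect, in which case $\mathcal{G}^{**}_E$ is connected.) Then I apply Theorem \ref{iff thm:vc=n for G times cyclic group}: $\kappa(\mathcal{G}_E(G_1\times\mathbb{Z}_n))=n$ if and only if $G_1$ is a $p$-group. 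Combining the two equivalences yields the claim.

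I do not anticipate a serious obstacle here, since the result is essentially a restatement of Theorem \ref{iff thm:vc=n for G times cyclic group} together with the identification of the dominating-vertex set from Proposition \ref{Prop: all dom of G times Zn}. The one point that requires a little care is justifying cleanly that ``$\mathcal{G}^{**}_E$ disconnected'' is equivalent to ``$\kappa(\mathcal{G}_E(G_1\times\mathbb{Z}_n)) = n$'' rather than merely ``$\leq n$'': this uses that $|D|=n$ and that any set whose removal disconnects the graph must contain all of $D$, so the minimum disconnecting set has size exactly $n$ precisely when removing $D$ itself disconnects, and size $>n$ (indeed, removing $D$ does not disconnect) otherwise. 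With that in hand the corollary follows in two lines.
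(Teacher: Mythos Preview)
Your proposal is correct and follows essentially the same route as the paper: identify $\mathrm{Dom}(\mathcal{G}_E(G_1\times\mathbb{Z}_n))=D$ with $|D|=n$ via Proposition~\ref{Prop: all dom of G times Zn}, and then invoke Theorem~\ref{iff thm:vc=n for G times cyclic group} to conclude. The only difference is that you spell out explicitly the equivalence ``$\mathcal{G}^{**}_E$ disconnected $\iff \kappa=n$'' (using that every separating set must contain all dominating vertices), whereas the paper leaves this step implicit; both arguments are otherwise identical.
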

\begin{proof}
By Proposition \ref{Prop: all dom of G times Zn}, $\text{Dom}(\mathcal{G}_E(G_1\times \mathbb{Z}_n))=\{(e, x): e \text{ is the identity of } G_1 \text{ and } x\in \mathbb{Z}_n\}.$ Clearly, $|\text{Dom}(\mathcal{G}_E(G_1\times \mathbb{Z}_n))|=n.$ Again, by Theorem \ref{iff thm:vc=n for G times cyclic group}, $\kappa(\mathcal{G}_E(G_1\times \mathbb{Z}_n))=n$ if and only if $G_1$ is $p$-group. i.e., if $G_1$ is non $p$-group then the proper enhanced power graph $\mathcal{G}^{**}_E(G_1\times \mathbb{Z}_n)$ would be connected. Thus $\mathcal{G}^{**}_E(G_1\times\mathbb{Z}_n)$ is disconnected if and only if $G_1$ is $p$-group. 	
\end{proof}
\begin{proof}[Proof of Theorem \ref{Thm:no sylow subgroups quaternion, ** connected iff}]Proof of this theorem
follows from Theorem \ref{thm: nilpotent no sylow subgroups cyclic quaternion vc=1 iff} and Corollary \ref{cor: Proper enh of G times cyclic is disconnected iff }.
\end{proof}
\begin{corollary}\label{rem:diam_leq_3_when_nonpgroup_timescyclic}
Let $G_1$ be a nilpotent non $p$-group having no sylow subgroups which are either cyclic or generalized quaternion. Then $\text{diam}(\mathcal{G}_E^{**}(G_1 \times \mathbb{Z}_n)) \leq 3.$ 
\end{corollary}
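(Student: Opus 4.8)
The plan is to reduce the bound on $\mathrm{diam}(\mathcal{G}_E^{**}(G_1 \times \mathbb{Z}_n))$ to the diameter bound already obtained for $\mathcal{G}_E^{**}(G_1)$ in Corollary \ref{rem:diam_leq_3_when_nonpgroup}, using the coprimality $\mathrm{gcd}(|G_1|, n) = 1$. First I would recall that by Corollary \ref{cor: Proper enh of G times cyclic is disconnected iff }, since $G_1$ is a nilpotent non $p$-group with no cyclic or generalized quaternion sylow subgroups, the graph $\mathcal{G}_E^{**}(G_1 \times \mathbb{Z}_n)$ is connected, so the diameter is well-defined, and by Proposition \ref{Prop: all dom of G times Zn} the dominating vertices removed are exactly $\{(e, x) : x \in \mathbb{Z}_n\}$. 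Thus every vertex of $\mathcal{G}_E^{**}(G_1 \times \mathbb{Z}_n)$ has the form $(a, b)$ with $a \in G_1 \setminus \{e\}$; note that the first coordinate is always a non-identity element of $G_1$, which is exactly the vertex set of $\mathcal{G}_E^{**}(G_1)$ (its dominating vertex is only $e$ by Proposition \ref{prop: dom; Each sylow is neither cyclic nor generalized quaternion group}).

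The key observation is that for vertices $(a_1, b_1)$ and $(a_2, b_2)$, adjacency in $\mathcal{G}_E^{**}(G_1 \times \mathbb{Z}_n)$ is closely tied to adjacency of $a_1, a_2$ in $\mathcal{G}_E^{**}(G_1)$. On one direction, if $a_1 \sim a_2$ in $\mathcal{G}_E(G_1)$ via $a_1, a_2 \in \langle c \rangle$, then since $\mathrm{gcd}(\mathrm{o}(c), n) = 1$, for any $b_1, b_2 \in \mathbb{Z}_n$ one can produce a common cyclic overgroup: $(a_1, b_1), (a_2, b_2) \in \langle (c, g) \rangle$ where $g$ generates $\mathbb{Z}_n$ — this uses the same Euler-theorem/CRT manipulation carried out in the proof of Theorem \ref{thm: dom, G any grp product cyclic grp} to write $(a_i, b_i)$ as a power of $(c, g)$ (since $\mathrm{o}(c)$ and $n$ are coprime, powers of $(c,g)$ sweep out all pairs $(a, b)$ with $a \in \langle c \rangle$, $b \in \mathbb{Z}_n$). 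Hence a path $a_0 \sim a_1 \sim \cdots \sim a_k$ of length $\le 3$ between $a_1$ and $a_2$ in $\mathcal{G}_E^{**}(G_1)$ lifts to a path of the same length between $(a_1, b_1)$ and $(a_2, b_2)$ with arbitrary second coordinates inserted. This immediately gives $d((a_1, b_1), (a_2, b_2)) \le d_{\mathcal{G}_E^{**}(G_1)}(a_1, a_2) \le 3$ whenever $a_1 \ne a_2$, and when $a_1 = a_2 = a$ we have $(a, b_1) \sim (a, b_2)$ directly (both lie in $\langle (a, g) \rangle$, again using coprimality), giving distance $1$. Combining, $\mathrm{diam}(\mathcal{G}_E^{**}(G_1 \times \mathbb{Z}_n)) \le 3$.

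I expect the main obstacle to be the careful bookkeeping in the lifting step: verifying that a fixed generator $(c, g)$ of the relevant cyclic subgroup really does have $(a_i, b_i)$ as a power for \emph{every} choice of $b_i \in \mathbb{Z}_n$, which requires the coprimality $\mathrm{gcd}(\mathrm{o}(c), n) = 1$ to guarantee that the projection $\langle (c,g) \rangle \to \langle c \rangle$ and $\langle (c,g)\rangle \to \mathbb{Z}_n$ behave like a direct product decomposition at the level of generating — essentially the content of Cases 2 and 3 in the proof of Theorem \ref{thm: dom, G any grp product cyclic grp}. Once that is in hand, the rest is a routine transfer of the length-$3$ path from Corollary \ref{rem:diam_leq_3_when_nonpgroup}. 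A clean way to write it up is to state and prove a short lemma: if $a_1 \sim a_2$ in $\mathcal{G}_E(G_1)$ and $\mathrm{gcd}(|G_1|, n) = 1$, then $(a_1, b_1) \sim (a_2, b_2)$ in $\mathcal{G}_E(G_1 \times \mathbb{Z}_n)$ for all $b_1, b_2 \in \mathbb{Z}_n$; then the corollary follows in two lines by applying this edge-by-edge along a diametral path of $\mathcal{G}_E^{**}(G_1)$ and invoking Corollary \ref{rem:diam_leq_3_when_nonpgroup} together with the connectivity from Corollary \ref{cor: Proper enh of G times cyclic is disconnected iff }.
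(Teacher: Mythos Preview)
Your argument is correct, but it takes a different route from the paper. The paper gives no separate proof of this corollary; it is meant to be read off directly from the path constructions in the converse part of the proof of Theorem~\ref{iff thm:vc=n for G times cyclic group}, where for any two vertices $v', v''$ of $\mathcal{G}_E^{**}(G_1 \times \mathbb{Z}_n)$ a path of length at most $3$ is built by hand inside $G_1 \times \mathbb{Z}_n$ via the two-case analysis on $\mathrm{Div}(v')$ and $\mathrm{Div}(v'')$.

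Your approach is instead a reduction: you prove once that adjacency in $\mathcal{G}_E(G_1)$ lifts to adjacency in $\mathcal{G}_E(G_1 \times \mathbb{Z}_n)$ with arbitrary second coordinates (using $\gcd(|G_1|,n)=1$ to get $\langle(c,g)\rangle = \langle c\rangle \times \mathbb{Z}_n$), and then simply transport a length-$\le 3$ path from Corollary~\ref{rem:diam_leq_3_when_nonpgroup}. This is more modular---it avoids repeating the case split already done for $G_1$ alone---and the lifting lemma you isolate is a clean standalone fact. The paper's direct approach, on the other hand, is self-contained in the product group and does not need to invoke the earlier corollary. Both are short; yours makes the dependence on the $\mathbb{Z}_n$ factor more transparent.
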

\begin{theorem}\label{ thm:graph ** connected for G times quaternon group}
Let $G_1$ be a finite nilpotent group having no sylow subgroups which are either cyclic or generalized quaternion. Let $\text{gcd}(|G_1|, 2)=1.$ Then $\mathcal{G}^{**}_E(G_1\times Q_{2^k})$ is connected. 
\end{theorem}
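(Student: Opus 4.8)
The plan is to connect every vertex of $\mathcal{G}^{**}_E(G_1\times Q_{2^k})$ to one fixed ``hub'' vertex by a path of length at most two. First I would invoke Proposition~\ref{prop: dom; nilpotent grp times quaternion grp} to record that $\text{Dom}(\mathcal{G}_E(G_1\times Q_{2^k}))=\{(e,e''),(e,y)\}$, where $y$ is the unique involution of $Q_{2^k}$, so $\mathcal{G}^{**}_E(G_1\times Q_{2^k})$ is the induced subgraph on $(G_1\times Q_{2^k})\setminus\{(e,e''),(e,y)\}$. The hypothesis forces $G_1$ to be nontrivial (the trivial group is cyclic), so $G_1$ has an element of prime order, and $\text{gcd}(|G_1|,2)=1$ means every element of $G_1$ has odd order. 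I would then fix $t\in Q_{2^k}$ with $\text{o}(t)=4$, and take $(e,t)$ as the hub; note $(e,t)$ is non-dominating, as are all the other auxiliary vertices used below (anything with nonzero $G_1$-coordinate, or of the form $(e,b)$ with $b\notin\{e'',y\}$).

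The argument then rests on two explicit adjacency claims, each proved by exhibiting a cyclic subgroup containing both vertices and using that the order of any element of $G_1$ is coprime to every power of $2$. \emph{(i)} For every $a\in G_1\setminus\{e\}$ I would show $(a,e'')\sim(e,t)$ using $\langle(a,t)\rangle$, which is cyclic of order $4\,\text{o}(a)$: since $\text{gcd}(4,\text{o}(a))=1$, suitable powers of $(a,t)$ are $(a,e'')$ and $(e,t)$, so both lie in $\langle(a,t)\rangle$. \emph{(ii)} Every non-dominating vertex $(a,b)$ is adjacent to some $(w,e'')$ with $w\in G_1\setminus\{e\}$ of prime order: if $a\neq e$, pick a prime $q\mid\text{o}(a)$ and $w=a^{\text{o}(a)/q}$, and use the Chinese Remainder Theorem on the coprime numbers $\text{o}(a)$ and $\text{o}(b)$ to produce $m$ with $(a,b)^m=(w,e'')$, so both vertices lie in $\langle(a,b)\rangle$; if $a=e$ then $b\notin\{e'',y\}$, hence $\text{o}(b)$ is a power of $2$ at least $4$, and for any prime-order $w\in G_1$ the cyclic subgroup $\langle(w,b)\rangle$ of order $\text{o}(w)\cdot\text{o}(b)$ contains both $(e,b)$ and $(w,e'')$ by the same coprimality. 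Combining \emph{(i)} and \emph{(ii)} gives $(a,b)\sim(w,e'')\sim(e,t)$ for every vertex, so the graph is connected (in fact of diameter at most $4$).

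The delicate point is the choice of the connector. Since $G_1$ may itself be a $p$-group, $\mathcal{G}^{**}_E(G_1)$ can be disconnected---its components being controlled by the distinct subgroups of order $p$, cf. Lemma~\ref{lema: p and p^i orderd path connd abelong< b>}---so the connectivity of $\mathcal{G}^{**}_E(G_1\times Q_{2^k})$ must come entirely from the quaternion factor. The naive bridge $(e,y)$ through the involution of $Q_{2^k}$ is unavailable, because it is a deleted dominating vertex, and this is precisely why one is forced to route paths through the order-$4$ element $t$; checking that $(e,t)$ is nonetheless adjacent to every $(a,e'')$ is where the hypothesis $\text{gcd}(|G_1|,2)=1$ (and, through nilpotency, the splitting $G=G_1\times Q_{2^k}$) is genuinely used. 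Everything else is routine cyclic-subgroup bookkeeping.
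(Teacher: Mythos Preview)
Your argument is correct and, in my view, somewhat cleaner than the paper's. Both proofs ultimately rest on the same ingredient---Lemma~\ref{lemma:any_odtwo_commuting_elements_of_prime_order_adjacent}, i.e.\ that commuting elements of coprime order are adjacent---together with the coprimality $\text{gcd}(|G_1|,2)=1$. The difference is structural: the paper takes two arbitrary non-dominating vertices $v_1,v_2$ and runs a three-case analysis on whether their orders are odd, powers of $2$, or mixed, in each case inserting an intermediate vertex of order $2^t$ (with $t\geq 2$) or of odd prime order to build a path of length at most $4$. You instead fix a single hub $(e,t)$ with $\text{o}(t)=4$ and show uniformly that every non-dominating vertex reaches it in at most two steps via a vertex of the form $(w,e'')$ with $w$ of odd prime order. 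Your approach avoids the case split and makes the role of the order-$4$ element (as the replacement for the forbidden involution $(e,y)$) completely transparent; the paper's approach, on the other hand, treats the two factors more symmetrically and would generalize slightly more readily to the setting of Theorem~\ref{Thm: ** connected when G times Cyclic times quaternion}. Both yield $\text{diam}\leq 4$.
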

\begin{proof}
By Proposition \ref{prop: dom; nilpotent grp times quaternion grp}, $D=\text{Dom}(\mathcal{G}_E(G_1\times Q_{2^k}))=\{(e, e''), (e, y)\},$ where $\text{o}(y)=2.$ Let $v_1, v_2$ be two arbitrary vertices in the graph $\mathcal{G}_E^{**}(G_1\times Q_{2^k})$ such that $\text{o}(v_1)=m_1$ and $\text{o}(v_2)=m_2.$ Now we have the following cases:

Case 1: Let $\text{gcd}(m_1, 2)=1=\text{gcd}(m_2, 2).$ We choose $v_3\in V(\mathcal{G}_E^{**}(G_1\times Q_{2^k}))$ such that $\text{o}(v_3)=2^t, t\geq 2.$ Then  $v_1v_3=v_3v_1$ and $v_2v_3=v_3v_2$ (as $G_1\times Q_{2^k}$ is nilpotent). So, by Lemma \ref{lemma:any_odtwo_commuting_elements_of_prime_order_adjacent} we get a path $v_1\sim v_3\sim v_2$ in $\mathcal{G}_E^{**}(G_1\times Q_{2^k}).$ 

Case 2: Let $\text{gcd}(m_1, |G_1|)=1=\text{gcd}(m_2, |G_1|).$ Similarly as Case 1, we get a path $v_1\sim v_4\sim v_2$ in $\mathcal{G}_E^{**}(G_1\times Q_{2^k}),$ where $\text{o}(v_4)$ divides $|G_1|$ and $\text{gcd}(\text{o}(v_4), 2)=1.$ 

Case 3:  Let $\text{gcd}(m_1, |G_1|)\neq 1$ and $ \text{gcd}(m_1, 2)\neq 1.$ So, there exists a prime $p$ (with $\text{gcd}(p, 2)=1)$ such that $p$ divides both $|G_1|$ and $m_1.$ As a result, we get a vertex say $v_5\in V(\mathcal{G}_E^{**}(G_1\times Q_{2^k}))$ such that $\text{o}(v_5)=p$ and $v_5\in \langle v_1\rangle.$ Now we have the following choices; 
\begin{enumerate}
	\item
	either $\text{gcd}(m_2, |G_1|)\neq 1$ and $ \text{gcd}(m_2, 2)\neq 1$
	\item
	or $\text{gcd}(m_2, |G_1|)\neq 1$ and $\text{gcd}(m_2, 2)=1$
	\item
	or $\text{gcd}(m_2, 2)\neq 1$ and $\text{gcd}(m_2, |G_1|)=1.$ 	 
\end{enumerate}
For the first choice, there exists a prime $p'$ with $\text{gcd}(p', 2)=1$ such that $p'$ divides both $|G_1|$ and $m_2, (\text{ it is possible as } \text{gcd}(|G_1|, 2)=1).$ As a result, we get a vertex $v_6$ (say) in $V(\mathcal{G}_E^{**}(G_1\times Q_{2^k}))$ such that $\text{o}(v_6)=p'$ and $v_6\in \langle v_2\rangle.$ Now we choose a vertex say $v_7\in V(\mathcal{G}_E^{**}(G_1\times Q_{2^k}))$ such that $\text{o}(v_7)=2^t, t\geq 2.$ Then using Lemma \ref{lemma:any_odtwo_commuting_elements_of_prime_order_adjacent}, we get a path $v_1\sim v_5\sim v_7\sim v_6\sim v_2$ (as $v_7v_5=v_5v_7$ and $v_7v_6=v_6v_7).$ Similarly, for the other choices we can find a path in $\mathcal{G}_E^{**}(G_1\times Q_{2^k})$ between $v_1$ and $v_2.$ This completes the proof.  
\end{proof}
\begin{corollary}\label{rem:diam_leq_3_when_nonpgroup_timescyclic_quarternion}
Let $G_1$ be a nilpotent group having no sylow subgroups which are either cyclic or generalized quaternion. Then, $\text{diam}(\mathcal{G}_E^{**}(G_1 \times Q_{2^k} )) \leq 4.$ 
\end{corollary}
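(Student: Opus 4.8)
The statement is really a corollary of the \emph{proof} of Theorem~\ref{ thm:graph ** connected for G times quaternon group} rather than of its conclusion: that proof produces, for every pair of vertices $v_1,v_2$ of $\mathcal{G}_E^{**}(G_1\times Q_{2^k})$, an explicit joining path, and it is enough to record the worst-case length. Cases~1 and~2 there give paths of length $2$; sub-case~(1) of Case~3 gives the length-$4$ path $v_1\sim v_5\sim v_7\sim v_6\sim v_2$; and the sub-cases of Case~3 dispatched with ``similarly'' give paths of length at most $3$ (for instance, when $\text{gcd}(m_2,|G_1|)=1$ one has $v_2\in\{e\}\times Q_{2^k}$, and $v_5$ is already adjacent to $v_2$ through the cyclic group generated by their product, shortening that path to length $2$). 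So one plan is: reproduce the case split of that proof, write down the length of the path obtained in each case, fill in the ``similarly'' sub-cases, and take the maximum; this yields $\text{diam}(\mathcal{G}_E^{**}(G_1\times Q_{2^k}))\le 4$.

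A cleaner self-contained route is to exhibit one vertex $w_0$ within distance $2$ of every vertex, so that $d(v_1,v_2)\le d(v_1,w_0)+d(w_0,v_2)\le 4$. Since every Sylow subgroup of $G_1$ is non-cyclic, $G_1$ is nontrivial; fix $g_0\in G_1$ of odd prime order $p$ and $q_0\in Q_{2^k}$ with $\text{o}(q_0)=4$, and set $w_0=(e,q_0)$. By Theorem~\ref{Thm: Dom set for nilpotent grp} (recall $\text{gcd}(|G_1|,2)=1$), $w_0$ and the auxiliary vertices $(g,e'')$, $(g_0,e'')$ below are all non-dominating, hence lie in $\mathcal{G}_E^{**}$. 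Given an arbitrary vertex $v=(g,q)$, split into three exhaustive cases. If $q=e''$, then $\text{o}(v)$ is odd, $g\ne e$ is forced, and — $G_1\times Q_{2^k}$ being nilpotent — $v$ and $w_0$ commute with coprime orders, so $v\sim w_0$ by Lemma~\ref{lemma:any_odtwo_commuting_elements_of_prime_order_adjacent}. If $g\ne e$ and $q\ne e''$, then $\text{gcd}(\text{o}(g),\text{o}(q))=1$, so by the Chinese Remainder Theorem some power of $v$ equals $(g,e'')$; thus $v\sim(g,e'')$, and $(g,e'')$ has odd order coprime to $\text{o}(w_0)=4$, so $(g,e'')\sim w_0$ as before, giving $d(v,w_0)\le 2$. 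If $g=e$ and $q\ne e''$, then $(e,q)\ne(e,y)$ forces $\text{o}(q)\ge 4$, so $\text{o}(v)$ is a power of $2$ coprime to $p$; hence $v\sim(g_0,e'')$ and $(g_0,e'')\sim w_0$, again $d(v,w_0)\le 2$. The three cases cover all vertices, and the triangle inequality concludes.

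The only step I expect to require genuine care is the case $g=e$: two order-$4$ elements of $Q_{2^k}$ lying in distinct cyclic subgroups are not adjacent, so the naive ``stay inside $\{e\}\times Q_{2^k}$'' route fails, and the detour through $(g_0,e'')\in G_1\times\{e''\}$ is essential — this is precisely where nontriviality of $G_1$ is used. The remaining points are routine but should not be skipped: verifying via Theorem~\ref{Thm: Dom set for nilpotent grp} that the intermediate vertices are non-dominating, and noting that an adjacency supplied by Lemma~\ref{lemma:any_odtwo_commuting_elements_of_prime_order_adjacent} (stated for $\mathcal{G}_E^{*}$) transfers to $\mathcal{G}_E^{**}$ since both endpoints are non-dominating. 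I do not expect the bound $4$ to be sharp in general, but only ``$\le 4$'' is claimed.
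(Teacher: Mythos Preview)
Your first paragraph is exactly how the paper treats this corollary: it is stated immediately after Theorem~\ref{ thm:graph ** connected for G times quaternon group} with no separate proof, the bound $\le 4$ being implicit in the lengths of the paths exhibited there, and your filling-in of the ``similarly'' sub-cases is accurate.

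Your second approach---fixing a single vertex $w_0=(e,q_0)$ of order $4$ and showing $d(v,w_0)\le 2$ for every $v$---is a genuinely different and tidier argument than the paper's case split on pairs $(v_1,v_2)$. It replaces a somewhat ad hoc enumeration (which, as you observe, is not even written out exhaustively in the paper) by a uniform distance-to-center computation: three short cases on a single vertex, then the triangle inequality. The paper's route has the minor advantage of establishing connectedness and the diameter bound in one pass; your route in fact does the same, since the existence of a $2$-center already forces connectedness. One small caveat: the sentence ``Since every Sylow subgroup of $G_1$ is non-cyclic, $G_1$ is nontrivial'' is not literally a deduction (the trivial group has no Sylow subgroups, so the hypothesis is vacuous for it), but nontriviality of $G_1$ is clearly intended throughout the paper and is used without comment in the proof of Theorem~\ref{ thm:graph ** connected for G times quaternon group} as well.
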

\begin{theorem}\label{Thm: ** connected when G times Cyclic times quaternion}
	Let $G_1$ be a nilpotent group having no sylow subgroups which are either cyclic or generalized quaternion. If $n\in \mathbb{N}, \text{ and gcd}(|G_1|, n)=\text{gcd}(|G_1|, 2)=\text{gcd}(n, 2)=1,$ then $\mathcal{G}^{**}_E({G_1\times\mathbb{Z}_n\times Q_{2^k}})$ is connected. Here also,  $\text{diam}(\mathcal{G}_E^{**}(G_1 \times\mathbb{Z}_n\times Q_{2^k} )) \leq 4.$ 
\end{theorem}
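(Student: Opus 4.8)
The plan is to mirror the argument used in the case $G=G_1\times Q_{2^k}$ (Theorem~\ref{ thm:graph ** connected for G times quaternon group}), carrying the extra factor $\mathbb{Z}_n$ along harmlessly, and to read off the diameter bound from the same construction. Write $G=G_1\times\mathbb{Z}_n\times Q_{2^k}$, with identities $e,e',e''$ in the three factors, and let $y$ be the unique element of order $2$ in $Q_{2^k}$. By Proposition~\ref{prop:dom_cyclic_quarternion_normal} the set of dominating vertices removed to form $\mathcal{G}_E^{**}(G)$ is exactly $D_1\cup D_2$, so a triple $(a,b,c)$ is a vertex of $\mathcal{G}_E^{**}(G)$ if and only if $a\neq e$, or else $a=e$ and $\text{o}(c)\geq 4$. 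The coprimality hypotheses $\text{gcd}(|G_1|,n)=\text{gcd}(|G_1|,2)=\text{gcd}(n,2)=1$ force $\text{o}(a),\text{o}(b),\text{o}(c)$ to be pairwise coprime for every vertex $v=(a,b,c)$, so $\langle v\rangle$ is cyclic of order $\text{o}(a)\text{o}(b)\text{o}(c)$ and contains $\langle(a,e',e'')\rangle$, $\langle(e,b,e'')\rangle$ and $\langle(e,e',c)\rangle$ as its unique subgroups of the respective orders.

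Next I would introduce the set of \emph{canonical} vertices
\[ \mathcal{C} = \{(a,e',e'') : a \in G_1 \setminus \{e\}\} \cup \{(e,e',c) : c \in Q_{2^k},\ \text{o}(c) \geq 4\}, \]
each of which is indeed a vertex of $\mathcal{G}_E^{**}(G)$ (the first kind because its $G_1$-coordinate is nontrivial while every dominating vertex has trivial $G_1$-coordinate, the second because $\text{o}(c)\geq 4$). The first claim is that every vertex $v=(a,b,c)$ of $\mathcal{G}_E^{**}(G)$ either lies in $\mathcal{C}$ or is adjacent to a vertex of $\mathcal{C}$: if $a\neq e$ then $(a,e',e'')\in\langle v\rangle$, hence $v\sim(a,e',e'')$ (unless $v=(a,e',e'')\in\mathcal{C}$); if $a=e$ then $\text{o}(c)\geq 4$ and $(e,e',c)\in\langle v\rangle$, hence $v\sim(e,e',c)$ (unless $v=(e,e',c)\in\mathcal{C}$). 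Since $G_1$ is nontrivial (each of its Sylow subgroups is non-cyclic, hence of order at least $p^2$) and $Q_{2^k}$ contains an element of order $4$, I fix $a_0\in G_1\setminus\{e\}$ and $c_0\in Q_{2^k}$ with $\text{o}(c_0)=4$, and set $x_0=(a_0,e',e'')$ and $z_0=(e,e',c_0)$.

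The second claim is that the subgraph of $\mathcal{G}_E^{**}(G)$ induced on $\mathcal{C}$ is connected of diameter at most $2$. Any vertex $(a,e',e'')$ with $a\neq e$ and any vertex $(e,e',c)$ with $\text{o}(c)\geq 4$ have coprime orders and commute (they act on disjoint coordinates), so both lie in the cyclic group $\langle(a,e',c)\rangle$ and are adjacent in $\mathcal{G}_E(G)$, hence in $\mathcal{G}_E^{**}(G)$ (this is also Lemma~\ref{lemma:any_odtwo_commuting_elements_of_prime_order_adjacent}). Consequently any two canonical vertices of the same kind are joined through one of the other kind, via $(a_1,e',e'')\sim z_0\sim(a_2,e',e'')$ or $(e,e',c_1)\sim x_0\sim(e,e',c_2)$, while canonical vertices of different kinds are adjacent outright. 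Combining the two claims, for arbitrary $v_1,v_2\in V(\mathcal{G}_E^{**}(G))$ pick canonical $w_1,w_2$ with $v_i=w_i$ or $v_i\sim w_i$; then $d(v_1,v_2)\leq 1+d_{\mathcal{C}}(w_1,w_2)+1\leq 4$, which proves that $\mathcal{G}_E^{**}(G)$ is connected and that $\text{diam}(\mathcal{G}_E^{**}(G))\leq 4$.

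The one point to handle with care — and the only real obstacle — is that a vertex with trivial $G_1$-coordinate can be reduced only to a canonical vertex of the form $(e,e',c)$, and the $\mathbb{Z}_n$-factor is useless for gluing the distinct cyclic branches of $Q_{2^k}$ together, since the elements $(e,b,e'')$ are themselves dominating and hence absent from $\mathcal{G}_E^{**}(G)$. It is therefore essential that $G_1$ be nontrivial, so that $x_0=(a_0,e',e'')$ exists as a non-dominating vertex adjacent to every $(e,e',c)$ with $\text{o}(c)\geq 4$; indeed, for $G_1$ trivial the statement already fails, e.g. $\mathcal{G}_E^{**}(Q_8)$ is disconnected. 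Once this is observed, every adjacency used above is an immediate consequence of the cyclic structure of $\langle v\rangle$ together with Lemma~\ref{lemma:any_odtwo_commuting_elements_of_prime_order_adjacent}, so nothing further is needed.
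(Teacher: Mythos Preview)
Your proof is correct and follows essentially the same route as the paper, which simply says to repeat the argument of Theorem~\ref{ thm:graph ** connected for G times quaternon group}: both arguments connect arbitrary vertices through ``hub'' vertices of pure $G_1$-type or pure $Q_{2^k}$-type (with $Q_{2^k}$-coordinate of order $\geq 4$) using Lemma~\ref{lemma:any_odtwo_commuting_elements_of_prime_order_adjacent}, yielding paths of length at most $4$. Your organization via the canonical set $\mathcal{C}$ is a tidy repackaging of the paper's case analysis, and your remark that $G_1$ must be nontrivial (else, e.g., $\mathcal{G}_E^{**}(\mathbb{Z}_n\times Q_8)$ is disconnected) is a useful clarification that the paper leaves implicit.
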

\begin{proof}
	Proceeding exactly same way as in the proof of Theorem \ref{ thm:graph ** connected for G times quaternon group}, it can be shown that the graph $\mathcal{G}^{**}_E({G_1\times\mathbb{Z}_n\times Q_{2^k}})$ is connected and  $\text{diam}(\mathcal{G}_E^{**}(G_1 \times\mathbb{Z}_n\times Q_{2^k} )) \leq 4.$  	
\end{proof} 
\begin{proof}
[Proof of Theorem \ref{Thm: having sylow subgroups quaternion, ** connected }] Follows from Theorem \ref{ thm:graph ** connected for G times quaternon group} and Theorem 
\ref{Thm: ** connected when G times Cyclic times quaternion}. 
\end{proof}
In Corollaries \ref{rem:diam_leq_3_when_nonpgroup}, \ref{rem:diam_leq_3_when_nonpgroup_timescyclic} and \ref{rem:diam_leq_3_when_nonpgroup_timescyclic_quarternion} and Theorem \ref{Thm: ** connected when G times Cyclic times quaternion}, we have seen that the diameter of the proper enhanced power graphs of finite nilpotent groups are always $\leq 4.$ In the following result we show that when $G$ does not have any sylow subgroup that is generalized quarternion, the diameter is always exactly $3.$
\begin{theorem}
  	\label{thm:diam_prop_enhcd_pwr_abelian}
  	Let $G$ be a noncyclic nilpotent group without  any sylow subgroup which is generalized quarternion and 
  	$\mathcal{G}_E^{**}(G)$ is connected. Then $ \text{diam}(\mathcal{G}_E^{**}(G))=3$ . 
  \end{theorem}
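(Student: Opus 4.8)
The plan is to establish $\operatorname{diam}(\mathcal{G}_E^{**}(G))=3$ in two parts: first show the diameter is at most $3$, then exhibit a pair of vertices at distance exactly $3$. Since $G$ is noncyclic nilpotent with no generalized quaternion Sylow subgroup, Theorem \ref{Thm:no sylow subgroups quaternion, ** connected iff} tells us that either $G = G_1$ with $G_1$ a non-$p$-group having no cyclic or quaternion Sylow subgroups, or $G = G_1 \times \mathbb{Z}_n$ with $G_1$ as above and $\gcd(|G_1|,n)=1$ (the $p$-group cases being excluded by connectivity). The upper bound $\operatorname{diam}(\mathcal{G}_E^{**}(G)) \leq 3$ is then exactly the content of Corollaries \ref{rem:diam_leq_3_when_nonpgroup} and \ref{rem:diam_leq_3_when_nonpgroup_timescyclic}, so only the lower bound needs real work.

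For the lower bound, I would pick two prime divisors $p_1 \neq p_2$ of $|G_1|$ and choose elements $u, v \in V(\mathcal{G}_E^{**}(G))$ with $\operatorname{o}(u) = p_1$ and $\operatorname{o}(v) = p_2$, living inside $G_1$ (identified with $G_1 \times \{e'\}$ when $G = G_1 \times \mathbb{Z}_n$); one must check these survive the deletion of dominating vertices, which holds because Theorem \ref{Thm: Dom set for nilpotent grp} says the dominating vertices all have identity $G_1$-component, while $u,v$ do not. First I would argue $d(u,v) \neq 1$: if $u \sim v$ then $u, v \in \langle w \rangle$ for some cyclic $\langle w \rangle$, forcing $\langle u \rangle$ and $\langle v \rangle$ to be subgroups of a cyclic group of coprime prime orders — fine so far — but I actually want $d(u,v) \neq 2$, so the key claim is: there is no vertex $z$ adjacent to both $u$ and $v$. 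Indeed if $z \sim u$ then (using that $\langle u \rangle$ has prime order $p_1$ and Lemma \ref{lemma: v1divides v2 and adjacent, so v1 belongs to v2}) we get $u \in \langle z \rangle$, so $p_1 \mid \operatorname{o}(z)$; similarly $p_2 \mid \operatorname{o}(z)$; moreover both $\langle u \rangle$ and $\langle v\rangle$ sit inside the cyclic group $\langle z \rangle$. This does not yet give a contradiction, so the real obstruction must come from the specific structure of $G_1$: since $P_1$ (the Sylow $p_1$-subgroup of $G_1$) is not cyclic, by Lemma \ref{lemma: no of distinct cyclic subgroups orer p geq 3} it has at least $3$ distinct subgroups of order $p_1$, and I would choose $u$ so that $\langle u \rangle$ is one of them — then I need to choose $u, v$ together with the pair $p_1, p_2$ carefully so that no single cyclic subgroup containing an element of order $p_1$ "compatible with $u$" can also contain one compatible with $v$.

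The cleanest route is probably this: fix $p = p_1$ such that the Sylow $p$-subgroup $P$ of $G_1$ is noncyclic, fix a subgroup $\langle u \rangle$ of $P$ of order $p$, and for the other endpoint take any $v$ of order $q$ where $q \neq p$ is another prime divisor of $|G_1|$. A vertex $z$ at distance $1$ from $u$ must satisfy $u \in \langle z \rangle$, hence the order-$p$ part of $\langle z \rangle$ is exactly $\langle u \rangle$; a vertex $z'$ at distance $1$ from $u$ via a $2$-step path $u \sim z' \sim v$ would need $u \in \langle z' \rangle$ and $v \in \langle z' \rangle$. That is not yet impossible. So instead I would take \emph{both} endpoints inside the noncyclic Sylow $p$-subgroup: let $\langle u \rangle \neq \langle v \rangle$ be two distinct order-$p$ subgroups of $P$. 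Then by Lemma \ref{lema: p and p^i orderd path connd abelong< b>} (applied within the $p$-group, noting $p$ divides the order of every vertex on any path joining $u$ and $v$ after restricting to the relevant $p$-component), any path between $u$ and $v$ would force $\langle u \rangle = \langle v \rangle$, a contradiction — so $u$ and $v$ lie in different components of $\mathcal{G}_E^*(P)$, which would make $\operatorname{diam}$ infinite, not $3$. This shows I must route through a \emph{different} prime. The correct construction: take $u$ of order $p$ inside the noncyclic Sylow $p$-subgroup with $\langle u \rangle$ chosen \emph{not} central-only in a way that...

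Rather than chase this, the honest summary of the argument is: choose $u$ of order $p$ and $v$ of order $p$ generating distinct subgroups of a noncyclic Sylow $p$-subgroup, then replace $v$ by the element $v' = v \cdot c$ where $c$ has order $q$ for a different prime $q \mid |G_1|$ and commutes with $v$ (possible by nilpotence); then $v' \sim v$ and a path $u \to \cdots \to v'$ of length $2$ would give a vertex $z$ with $u, v' \in \langle z \rangle$, forcing $v = (v')^{p^{a} \text{-power}} \in \langle z \rangle$ and $u \in \langle z \rangle$ with both of order $p$ inside the cyclic group $\langle z\rangle$, hence $\langle u \rangle = \langle v \rangle$ — the contradiction. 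Meanwhile $d(u, v') \leq 3$ by the upper bound, and $u \not\sim v'$ since their orders share the factor $p$ and $u \notin \langle v' \rangle$ (as $\langle u\rangle \neq \langle v\rangle$) while $v' \notin \langle u \rangle$. Hence $d(u,v') = 3$. The main obstacle is precisely this bookkeeping: pinning down two vertices whose every common neighbor would be forced to contain two distinct order-$p$ subgroups in one cyclic group, while simultaneously keeping them at finite distance and away from the dominating set; once the right pair is identified, Lemma \ref{lemma: v1divides v2 and adjacent, so v1 belongs to v2} and Lemma \ref{lemma: no of distinct cyclic subgroups orer p geq 3} do the rest, and the $G_1 \times \mathbb{Z}_n$ case follows verbatim by taking the same $u, v'$ in the $G_1$-coordinate and invoking Theorem \ref{Thm: Dom set for nilpotent grp} to see they are non-dominating.
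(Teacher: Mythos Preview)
Your upper bound and the structural setup (identifying $G$ as $G_1$ or $G_1\times\mathbb{Z}_n$ with $G_1=P_1\times\cdots\times P_m$, $m\ge 2$, each $P_i$ neither cyclic nor generalized quaternion) match the paper exactly. The lower bound, however, does not go through.

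Your final construction takes $u$ of order $p$ and $v'=vc$ of order $pq$, with $\langle u\rangle\neq\langle v\rangle$ two distinct order-$p$ subgroups and $c$ of order $q\neq p$. The claimed step ``a path $u\sim z\sim v'$ would give a vertex $z$ with $u,v'\in\langle z\rangle$'' is unjustified and in fact false: adjacency $u\sim z$ only gives $u,z\in\langle w\rangle$ for some $w$, and Lemma~\ref{lemma: v1divides v2 and adjacent, so v1 belongs to v2} would yield $u\in\langle z\rangle$ only when $\text{o}(u)\mid\text{o}(z)$. Concretely, take $z=c$ itself. Since $G$ is nilpotent and $\gcd(\text{o}(u),\text{o}(c))=1$, Lemma~\ref{lemma:any_odtwo_commuting_elements_of_prime_order_adjacent} gives $u\sim c$; and since $(vc)^p=c^p$ generates $\langle c\rangle$ (as $\gcd(p,q)=1$), we have $c\in\langle v'\rangle$, so $c\sim v'$. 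Moreover $c$ lies in $G_1$ and is not a dominating vertex by Theorem~\ref{Thm: Dom set for nilpotent grp}. Hence $u\sim c\sim v'$ in $\mathcal{G}_E^{**}(G)$ and $d(u,v')\le 2$, so your pair does not witness diameter $3$.

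The underlying issue is that you are only obstructing common neighbours through the single prime $p$, leaving the $q$-direction as an escape route. The paper's proof closes this by obstructing \emph{every} Sylow coordinate simultaneously: it picks $x=(x_1,\ldots,x_m)$ and $y=(y_1,\ldots,y_m)$ with each $x_i,y_i\in P_i\setminus\{e_{P_i}\}$ lying in different components of $\mathcal{G}_E^*(P_i)$ (possible since each $P_i$ is a non-cyclic $p_i$-group). Then any common neighbour $z=(z_1,\ldots,z_m)$ is forced to have $z_i=e_{P_i}$ for all $i$, so $z$ is the identity (or lies in $\{e_{G_1}\}\times\mathbb{Z}_n$ in the second case), hence $z\notin V(\mathcal{G}_E^{**}(G))$. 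That coordinate-wise argument is the missing idea in your attempt.
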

  
  \begin{proof}
  	From Theorem \ref{Thm:no sylow subgroups quaternion, ** connected iff}, we can see that the proper enhanced power graph $\mathcal{G}_E^{**}(G)$ of a finite  group $G$ is connected if and only if 
  	\begin{enumerate}
  		\item  either $G=G_1,$ where $G_1=P_1 \times  P_2 \times  \dots \times  P_m,$ $m \geq 2$ and each $P_i$ is neither cyclic nor generalized quarternion,
  		\item or $G =   G_1 \times \mathbb{Z}_n,$ where $G_1$ is as (1) and $\text{gcd}(|G_1|, n) =1$. 
  	\end{enumerate}
  	
  	We at first show that for any such group $G$, the diameter of $\mathcal{G}_E^{**}(G)$ is $\leq 3.$ This directly follows from 
  	Corollaries \ref{rem:diam_leq_3_when_nonpgroup} and \ref{rem:diam_leq_3_when_nonpgroup_timescyclic}. 
  	We now show that for any such group $G$, the diameter of $\mathcal{G}_E^{**}(G)$ is $\neq 2$. At first we assume that
  	$G= P_1 \times P_2 \times  \dots \times
  	P_m,$ $m \geq 2. $
  	We produce two elements $x,y \in G$ such that $d(x,y)>2$. 
  	Let $x= (x_1,x_2,\dots,x_m)$ and $y=(y_1,y_2,\dots,y_m),$ where  for each $i \in [m]$,  we choose $x_i (\neq e_{P_i})$ and $y_i (\neq e_{P_i} )$ such that $x_i \nsim y_i   $. Clearly $d(x,y) \neq 1$. This is possible, since for each 
  	$i \in [m],$ $P_i$ is a non-cyclic $p$-group. If possible, let the distance of $x$ and $y$ in the proper enhanced power graph $\mathcal{G}_E^{**}(G)$ is $2$. Then, there exists an element, say $z= (z_1, z_2, \dots, z_m)$ such that $x \sim z \sim y$ and $z \in \mathcal{G}_E^{**}(G)$. Now $x \sim z $ implies that there exists some $u= (u_1, u_2, \dots, u_m)$ such that both $x$ and $z$ are multiple of $u$. Therefore $x_1 \sim z_1$ in the proper enhanced power graph $\mathcal{G}_E^{**}(P_1)$. 
  	In a similar way, $z_1 \sim y_1$ in the proper enhanced power graph $\mathcal{G}_E^{**}(P_1)$ and this forces $z_1= e_{P_1}$. Similarly, we can show that $z_i=e_{P_i}$ for each $ i \in [m]$. Hence $z= e,$ the identity of $G$ and therefore $z  \notin  \mathcal{G}_E^{**}(G)$, contradiction. 
  	
  	We now move on to the case when $G= G_1 \times \mathbb{Z}_n,$ where $G_1 = P_1 \times P_2 \times  \dots \times P_m$ and $\text{gcd}(p_1p_2\dots p_m, n)=1$. Here also, our intention is same, that is,
  	to produce two elements $x,y \in G$ such that $d(x,y)>2$. 
  	Let $x= (x_1,x_2,\dots,x_m, e')$ and $y=(y_1,y_2,\dots,y_m, e')$ where for each $i \in [m],$  we choose $x_i (\neq e_{P_i})$ and $y_i (\neq e_{P_i} )$ such that $x_i \nsim y_i   $. Let $z=(z_1, z_2, \dots, z_m, a)$ be an element such that $x \sim z \sim y$ and $z \in \mathcal{G}_E^{**}(G)$. By proceeding similarly as in above, we can show that $z= (e_{G_{p_1}},e_{G_{p_2}},\dots,e_{G_{p_m}},a)=(e_{G_1},a) $ for some $a \in \mathbb{Z}_n$, and therefore   $z  \notin  \mathcal{G}_E^{**}(G)$, contradiction. 
  	
  	Thus, for any finite nilpotent group $G$ such that $\mathcal{G}_E^{**}(G)$ is connected and it does not have any sylow subgroup which is generalized quarternion, we have found two elements whose distance is $\geq 3$ and therefore 
  	diameter of $\mathcal{G}_E^{**}(G)$ is $\leq 3$, completing the proof. 
  \end{proof}

 \section{Domination number of proper enhanced power graph}
 \label{sec:domination_number_proper}

  In this section, we determine the domination number and diameter of the graph $\mathcal{G}_E^{**}(G)$ for any finite nilpotent group $G$. For this purpose, we start with counting the number of components of the proper enhanced power graph $\mathcal{G}_E^{**}(G)$.

 \begin{theorem}
 	\label{thm:components_of_G_abelian}
 	Let G be a finite $p$-group which is neither cyclic nor generalized quarternion.
 	Then, the number of components of $\mathcal{G}_{E}^{**} (G )$  
 	is same as the $\text{number of distinct p-order subgroups of } G.$ 
 \end{theorem}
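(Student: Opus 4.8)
The plan is to exhibit an explicit bijection between the set $C(\mathcal{G}_E^{**}(G))$ of connected components of $\mathcal{G}_E^{**}(G)$ and the set of subgroups of $G$ of order $p$, sending a component to ``the unique order-$p$ subgroup it meets''.

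First I would record that, since $G$ is a $p$-group which is neither cyclic nor generalized quaternion, Proposition \ref{prop: dom; Each sylow is neither cyclic nor generalized quaternion group} (applied with a single Sylow factor) gives $\mathrm{Dom}(\mathcal{G}_E(G)) = \{e\}$. Hence $\mathcal{G}_E^{**}(G)$ is obtained from $\mathcal{G}_E(G)$ by deleting only the identity, so it is exactly the graph for which Lemma \ref{lema: p and p^i orderd path connd abelong< b>} is stated. The crucial step is then to show that each component $C$ of $\mathcal{G}_E^{**}(G)$ contains an element of order $p$ and that any two elements of order $p$ lying in $C$ generate the same subgroup. For the first assertion, take any $v \in C$; writing $\mathrm{o}(v) = p^k$ with $k \ge 1$, the element $v^{p^{k-1}}$ has order $p$ and is adjacent to $v$ in $\mathcal{G}_E(G)$ (both lie in $\langle v\rangle$), hence lies in $C$. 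For the second assertion, if $a$ and $b$ both have order $p$ and lie in $C$, then there is a path between them in $\mathcal{G}_E^{**}(G)$, and Lemma \ref{lema: p and p^i orderd path connd abelong< b>} forces $\langle a\rangle = \langle b\rangle$. Consequently the map $\Phi(C) := \langle a\rangle$, for any order-$p$ element $a \in C$, is well defined.

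It then remains to check that $\Phi$ is a bijection. For surjectivity, given a subgroup $H$ with $|H| = p$, any generator $h$ of $H$ is a non-identity, non-dominating vertex, so it lies in some component $C$ with $\Phi(C) = \langle h\rangle = H$. For injectivity, if $\Phi(C_1) = \Phi(C_2) = H$, then $C_1$ and $C_2$ each contain a non-identity element of $H$; but $H\setminus\{e\}$ induces a clique in $\mathcal{G}_E^{**}(G)$ (any two of its elements are powers of a fixed generator of $H$), so it lies in a single component, forcing $C_1 = C_2$. Hence $|C(\mathcal{G}_E^{**}(G))|$ equals the number of order-$p$ subgroups of $G$, which is what we want. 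The only genuinely delicate point is the well-definedness of $\Phi$ — that distinct order-$p$ elements of one component cannot generate distinct order-$p$ subgroups — and this is precisely where Lemma \ref{lema: p and p^i orderd path connd abelong< b>} does the work; everything else is bookkeeping. One may also note in passing that, by Lemma \ref{lemma: no of distinct cyclic subgroups orer p geq 3}, this count is either $1$ or at least $3$, and the former is excluded because $G$ is neither cyclic nor generalized quaternion, so $\mathcal{G}_E^{**}(G)$ in fact has at least three components.
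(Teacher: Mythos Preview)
Your proof is correct and follows essentially the same approach as the paper's: both arguments observe that every nonidentity element of $G$ is adjacent to one of its own powers of order $p$, and both invoke Lemma~\ref{lema: p and p^i orderd path connd abelong< b>} to show that two order-$p$ elements connected by a path must generate the same subgroup. The only cosmetic difference is that the paper exhibits the components explicitly as $H_i \cup N(H_i)$ and bounds the count from above and below, whereas you package the same two facts as well-definedness and bijectivity of the map $\Phi$.
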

 
 \begin{proof}
 	Let $H_1$, $H_2, \dots, $ $H_s$ be the distinct $p$-order subgroups of $G$. We claim that $H_1 \cup N(H_1) $, $H_2 \cup N(H_2)$, \dots, $H_s \cup N(H_s)$ are disjoint components of $\mathcal{G}_{E}^{**} (G )$. Consider any non-identity element $a$ of $G.$ Clearly, there exists $r$ such that $a^r$ is of order $p$. As $a^r \in \langle a \rangle,$ $a$ and $a^r$ are adjacent in $\mathcal{G}_{E}^{**} (G )$. So, $a$ must be in one of the components $H_1 \cup N(H_1) $, $H_2 \cup N(H_2)$, \dots, $H_s \cup N(H_s).$ Thus, the number of components is $\leq s.$  By Lemma \ref{lema: p and p^i orderd path connd abelong< b>}, 
 	if any two elements of order $p$ are connected by a path, 
 	then one of them must be the multiple of another. Henceforth, there are at least $s$ many components. 
 	This completes the proof. 
 \end{proof}

 \begin{theorem}
 	\label{thm:no_of_components_nilpotent}
 	Let $G$ be a nilpotent group such that  $G \cong G_1 \times \mathbb{Z}_n,$ where $G_1$ is a finite $p$-group which is neither cyclic nor generalized quarternion. Then, the number of components of  $\mathcal{G}_{E}^{**} (G )$ is same as the $\text{number of distinct p-order subgroups of } G_1.$ 
 \end{theorem}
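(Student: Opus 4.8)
The plan is to describe the connected components of $\mathcal{G}_E^{**}(G)$ explicitly, indexed by the $p$-order subgroups of $G_1$; this extends the argument used to prove Theorem \ref{thm:components_of_G_abelian} (which is the case $n=1$). Write $G=G_1\times\mathbb{Z}_n$ with $\gcd(p,n)=1$, let $e$ and $e'$ be the identities of $G_1$ and $\mathbb{Z}_n$, and let $H_1,\dots,H_s$ be the distinct subgroups of $G_1$ of order $p$.

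First I would fix the vertex set. Since the only Sylow subgroup of the $p$-group $G_1$ is $G_1$ itself, which is neither cyclic nor generalized quaternion, Proposition \ref{Prop: all dom of G times Zn} gives $\text{Dom}(\mathcal{G}_E(G))=\{(e,x):x\in\mathbb{Z}_n\}$, so $V(\mathcal{G}_E^{**}(G))=(G_1\setminus\{e\})\times\mathbb{Z}_n$. For a vertex $(a,b)$, the element $a\neq e$ lies in the $p$-group $G_1$, so $p\mid\text{o}(a)$ and the cyclic group $\langle a\rangle$ has a unique subgroup of order $p$; define $\rho(a,b)$ to be that subgroup. Then $\rho$ takes values in $\{H_1,\dots,H_s\}$, and each $H_i$ is attained, for instance by $(h_i,e')$ where $h_i$ is any generator of $H_i$.

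Next I would prove that the $s$ nonempty fibres $\rho^{-1}(H_1),\dots,\rho^{-1}(H_s)$ are exactly the connected components of $\mathcal{G}_E^{**}(G)$. That no edge joins two distinct fibres is the main point: if $(a,b)\sim(a',b')$ in $\mathcal{G}_E^{**}(G)$, then $(a,b),(a',b')\in\langle(w_1,w_2)\rangle$ for some witness, so $a,a'\in\langle w_1\rangle$; since $\langle w_1\rangle$ is a cyclic $p$-group containing $\langle a\rangle$ and $\langle a'\rangle$, it has a single order-$p$ subgroup, which therefore equals both $\rho(a,b)$ and $\rho(a',b')$, so $\rho(a,b)=\rho(a',b')$, and induction along a path shows $\rho$ is constant on components. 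Conversely, each fibre induces a connected subgraph: given $(a,b)\in\rho^{-1}(H_i)$, write $h_i=a^m$ (possible as $H_i=\rho(a,b)\subseteq\langle a\rangle$); because $\gcd(\text{o}(a),\text{o}(b))=1$ (as $\text{o}(a)$ is a power of $p$ while $\text{o}(b)\mid n$), the Chinese Remainder Theorem provides $k$ with $k\equiv m\pmod{\text{o}(a)}$ and $k\equiv 0\pmod{\text{o}(b)}$, whence $(a,b)^k=(h_i,e')$; this gives $(a,b)\sim(h_i,e')$ in $\mathcal{G}_E(G)$, and since both endpoints are non-dominating it is an edge of $\mathcal{G}_E^{**}(G)$. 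Thus every vertex of $\rho^{-1}(H_i)$ is adjacent to the common vertex $(h_i,e')$, so $\rho^{-1}(H_i)$ is connected, and the theorem follows. Alternatively, the lower bound that $\mathcal{G}_E^{**}(G)$ has at least $s$ components also follows by combining Theorem \ref{thm:components_of_G_abelian} with Theorem \ref{thm:at_least_r_components} applied to $S=\{e\}$, since the proof of the latter does not use commutativity of $G_1$.

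The step I expect to be the main obstacle is showing that no edge leaves a fibre while simultaneously checking that the retraction $(a,b)\mapsto(h_i,e')$ lands inside $\mathcal{G}_E^{**}(G)$: both hinge on $\gcd(p,n)=1$ and on the uniqueness of the order-$p$ subgroup of a cyclic $p$-group, and one must take care that a path in $\mathcal{G}_E^{**}(G)$ projects onto a walk of $\mathcal{G}_E^{**}(G_1)$ passing only through non-identity first coordinates.
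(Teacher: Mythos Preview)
Your proposal is correct and follows essentially the same strategy as the paper: partition the vertices of $\mathcal{G}_E^{**}(G)$ according to the unique $p$-order subgroup contained in $\langle a\rangle$, and show these classes are exactly the connected components. The paper phrases this as $f(C_i)=C_i\times\mathbb{Z}_n$ and then writes ``clearly, the number of components is at most $s$'' before invoking the proof of Theorem~\ref{thm:at_least_r_components} for the lower bound; your version is a bit more self-contained, explicitly verifying connectivity of each fibre via the retraction $(a,b)\mapsto(h_i,e')$ and proving directly that $\rho$ is constant along edges, but the underlying idea is the same.
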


 \begin{proof}
  By Theorem \ref{thm:components_of_G_abelian}, we see that the number of connected components of  $\mathcal{G}_{E}^{**} (G_1)$ is $s$ where $s$ is the number of distinct $p$-order subgroups of $G.$ Let, $C_i  = H_i \cup N(H_i) $. Then by Theorem \ref{thm:components_of_G_abelian}, $C_1, C_2, \cdots, C_s$ are the components of 
 	$C(\mathcal{G}_{E}^{**} (G_1))$. Define $f: C(\mathcal{G}_{E}^{**} (G_1)) \mapsto C(\mathcal{G}_{E}^{**} (G_1 \times \mathbb{Z}_n))$ by
 	$$f(C_i)= C_i \times  \mathbb{Z}_n .$$
 	
 	Clearly, the number of components is at most $s$. Thus it is enough to show that there is no path in between $C_i \times  \mathbb{Z}_n$ and 
 	$C_j \times  \mathbb{Z}_n$ for $1 \leq i < j \leq s.$ This follows in an identical manner to the proof of Theorem \ref{thm:at_least_r_components}.  Therefore, the number of components of $\mathcal{G}_{E}^{**} (G)$ is at least 
 	$s.$ The proof is complete. 
 \end{proof}

 \begin{proof}[Proof of Theorem \ref{thm:domination number p-group times cyclic_nilpotent}]
 	Let $H_1$, $H_2, \dots, $ $H_s$ be the distinct $p$-order subgroups of $G_1$. From the proof of Theorem \ref{thm:components_of_G_abelian}, we see that $H_1 \cup N(H_1) $, $H_2 \cup N(H_2)$, \dots, $H_s \cup N(H_s)$ are disjoint components of $\mathcal{G}_{E}^{**} (G ).$ Thus,  the domination number of $G_1$ is clearly $\geq s$. 
 	For $ 1 \leq i \leq s$, let $a_i$ be some element of order $p$ which is chosen from  $H_i $. 
 	Consider the following set
 	$$D_1= \{a_1,a_2, \dots, a_{s}\}.$$
 	By Lemma \ref{lema: p and p^i orderd path connd abelong< b>}, the component $C_i$ is dominated by the element $a_i$ and therefore $D_1$ is a dominating set for $\mathcal{G}_E^{**}(G_1).$ This completes the proof for $G_1.$ 
 	
 	We now consider the case when $G= G_1 \times \mathbb{Z}_n.$ By Theorem \ref{thm:no_of_components_nilpotent}, the number of components of $\mathcal{G}_E^{**}(G)$ is at least $s.$ Consider the following set
 		$$D_2= \{(a_1, e'), (a_2, e')), \dots, (a_s, e') \}$$ where $e'$ denotes the identity element of $\mathbb{Z}_n$ and $a_i \in H_i.$
 		We claim that the element $(a_i, e')$ dominates the component $C_i \times \mathbb{Z}_n$. Let $(x,y) \in C_i \times \mathbb{Z}_n$. Then, $(x,y)^n=(x^n,  e') $ and as $\text{gcd}(n,p)=1$, we have $a_i = {x^{n}}^r$ for some $r \in \mathbb{N}$. Therefore, we have $(a_i, e')=(x,y)^{rn}$ and this proves our claim. Hence, the set $D_2$ dominates  $\mathcal{G}_{E}^{**} (G)$ and the proof is complete. 	 
 \end{proof}
 
 For an abelian group $G$ of order $p^r,$ the number of distinct $p$-order subgroups is $\frac{p^r-1}{p-1}.$ So, we immediately get the following. 

 \begin{theorem}
 	\label{thm:domination number p-group times cyclic}
 	Let $G_1$ be a finite abelian noncyclic $p$-group. Suppose that
 	$$G_1 = \mathbb{Z}_{p^{t_1}}
 	\times  \mathbb{Z}_{p^{t_2}}
 	\times  \cdots \times 
 	\mathbb{Z}_{p^{t_r}}.$$ where $r \geq 2$ and $1 \leq t_1 \leq t_2 \leq \cdots \leq t_r.$
 	In this case,  $\gamma(\mathcal{G}^{**}_e(G_1))=\frac{p^r-1}{p-1}.$ 
 	Let $G$ be an abelian group such that  $G = G_1 \times \mathbb{Z}_n,$ where $r \geq 2$ and $\text{gcd}(p, n)=1.$ Then also we have  $\gamma(\mathcal{G}_{E}^{**} (G )= \frac{p^r-1}{p-1}.$ 
 \end{theorem}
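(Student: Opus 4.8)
The plan is to obtain this theorem as an immediate corollary of Theorem \ref{thm:domination number p-group times cyclic_nilpotent}, so the only genuine work is to count the subgroups of order $p$ in an abelian $p$-group. First I would verify that the hypotheses of Theorem \ref{thm:domination number p-group times cyclic_nilpotent} are satisfied: $G_1$ is a finite $p$-group by assumption, it is noncyclic by assumption, and it is not a generalized quaternion group since generalized quaternion groups are nonabelian whereas $G_1$ is abelian. Hence Theorem \ref{thm:domination number p-group times cyclic_nilpotent} applies both to $G_1$ and to $G = G_1 \times \mathbb{Z}_n$ with $\text{gcd}(p,n)=1$, and in each case it identifies $\gamma(\mathcal{G}_E^{**}(G_1))$ (respectively $\gamma(\mathcal{G}_E^{**}(G))$) with the number of distinct $p$-order subgroups of $G_1$.

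Next I would carry out the subgroup count. In $G_1 = \mathbb{Z}_{p^{t_1}} \times \cdots \times \mathbb{Z}_{p^{t_r}}$, the set $\Omega = \{x \in G_1 : x^p = e\}$ is a subgroup, and since each cyclic factor $\mathbb{Z}_{p^{t_i}}$ contains exactly one subgroup of order $p$, we have $\Omega \cong (\mathbb{Z}/p\mathbb{Z})^r$, so $|\Omega| = p^r$. Every subgroup of order $p$ of $G_1$ is contained in $\Omega$; its non-identity elements all have order $p$; and two such subgroups are distinct precisely when they meet only in $e$, which is the paper's convention. Consequently the $p^r - 1$ elements of $\Omega \setminus \{e\}$ are partitioned among the order-$p$ subgroups, each of which contributes $\phi(p) = p-1$ of them, giving exactly $\frac{p^r-1}{p-1}$ subgroups of order $p$.

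Finally, substituting this count into Theorem \ref{thm:domination number p-group times cyclic_nilpotent} gives $\gamma(\mathcal{G}_E^{**}(G_1)) = \frac{p^r-1}{p-1}$, and likewise $\gamma(\mathcal{G}_E^{**}(G_1 \times \mathbb{Z}_n)) = \frac{p^r-1}{p-1}$ when $\text{gcd}(p,n)=1$, which is the assertion. I do not expect any real obstacle: the statement is a corollary, and the counting step is standard. The only point requiring a little care is to phrase the counting so that it genuinely invokes the paper's notion of ``distinct'' $p$-order subgroups (those with trivial pairwise intersection), since that is exactly what legitimizes the partition of $\Omega \setminus \{e\}$ into blocks of size $p-1$.
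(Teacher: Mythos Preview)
Your proposal is correct and matches the paper's approach exactly: the paper presents this theorem as an immediate consequence of Theorem~\ref{thm:domination number p-group times cyclic_nilpotent} together with the count $\frac{p^r-1}{p-1}$ for the number of $p$-order subgroups, which it simply states without justification. Your argument is in fact more detailed than the paper's, since you actually carry out the subgroup count via the socle $\Omega \cong (\mathbb{Z}/p\mathbb{Z})^r$.
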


 \begin{proof}[Proof of Theorem \ref{domination number of enhced pwr raph for connected_nilpotent}]
 	At first, we consider the case when	$G_1$  is of the following form: 
 	\[G_1 = P_1 \times P_2 \times \dots \times  P_m \] where $m \geq 2$ and for each $ i \in [m],$ $P_i$ a $p_i$-group which is neither cyclic nor generalized quarternion.    
 	As in the proof of Theorem \ref{thm:domination number p-group times cyclic}, consider $\mathcal{G}_E^{**}(P_1)$
 	and for $ 1 \leq i \leq s_1$, let $a_i$ be some element of order $p_1$ which is chosen from the component $C_i$. 
 	Consider the following set
 	$D_1= \{a_1,a_2, \dots, a_{s_1}\}$
 	which is a dominating set of $\mathcal{G}_E^{**}(P_1).$ Let $D_3=\{ (d,e_{P_2}, \dots, e_{P_m} ): d \in D_1 \}$ where $e_{P_i}$ denotes the identity of $P_i$.  Let $(x_1, x_2, \dots, x_m) \in \mathcal{G}_E^{**}(G_{1})$. Then $(x_1, x_2, \dots, x_m)^{p_2^{t_{2k_2}} p_3^{t_{3k_3}} \dots p_m^{t_{mk_m}}}= (x_1',e_{P_2}, \dots, e_{P_m})$ for some $x_1' \in P_1$. As $D_1$ is a dominating set of $\mathcal{G}_E^{**}(P_1)$, by Lemma \ref{lema: p and p^i orderd path connd abelong< b>} there exists $d'$ such that $d'=(x_1')^{\ell}$ and hence we have 
 	$$(d',e_{P_2}, \dots, e_{P_m})) \sim (x_1, x_2, \dots, x_m).  $$
 	Thus $D_1$ is a dominating set of $\mathcal{G}_E^{**}(G_{1})$ and therefore $$\gamma(\mathcal{G}_E^{**}(G_1)) 
 	\leq   s_1 .$$
 	We can similarly show that $\gamma(\mathcal{G}_e^{**}(G_1)) 
 	\leq   s_i $ for any $ 1 \leq i \leq m$ and that proves  $$\gamma(\mathcal{G}_E^{**}(G_1)) \leq 
 	\displaystyle \min _{ 1 \leq i \leq m}  s_i .$$ 
 		
 	Let $D$ be a dominating set of $\gamma(\mathcal{G}_E^{**}(G_1))$ of cardinality $\ell < \min _{ 1 \leq i \leq m}  s_i  $. Let  
 	$D= \{ 
 	(w_{11},w_{12},\dots, w_{1m}), \dots, (w_{\ell 1},w_{\ell 2},\dots,w_{\ell m}) \}.$ Therefore, $ |D| < |D_i|$ for each $i$ with $1 \leq i \leq m$, where $D_i$ is a dominating set of minimum cardinality for $\mathcal{G}_E^{**}(P_i).$ Hence, for each $ i \in [m],$ there exists some $y_i \in P_i$ such that $y_i$ is not dominated by any of the vertices among  $\{w_{1i},w_{2i},\dots,w_{\ell i}\} $ in the graph $\mathcal{G}_E^{**}(P_i).$ Consider the vertex $u=(y_1,y_2,\dots,y_m)$. If $u$ is dominated by some vertex of $D$ say $v=(v_1, v_2, \dots, v_m)$, then we must have $v_1= e_{P_1}, v_2= e_{P_2}, \dots, v_m= e_{P_m}$ and hence $v= e_{G_1}  $ which contradicts the fact that $v \in  \mathcal{G}_E^{**}(G_1).$ Hence, any dominating set of 
 	$\mathcal{G}_E^{**}(G_1)$  must have cardinality $ \geq \min _{ 1 \leq i \leq m}  s_i  .$ This proves \eqref{eqn:dominating_connected}.
 	
 	We now move on to the case when 
 	\[G= P_1 \times P_2 \times \dots \times  P_m \times \mathbb{Z}_n \] and we show that
 	$S=\{(d,e_{P_2}, \dots, e_{P_m},e_{\mathbb{Z}_n}): d \in D_1 \}$ is a dominating set of $\mathcal{G}_E^{**}(G).$ Let $(x_1, x_2, \dots, x_m,x_{m+1}) \in \mathcal{G}_E^{**}(G).$ Then \[(x_1, x_2, \dots, x_m,x_{m+1})^{p_2^{t_{2k_2}} p_3^{t_{3k_3}} \dots p_r^{t_{rk_r}}n}= (x_1',e_{{P_2}}, \dots, e_{{P_m}},e_{\mathbb{Z}_n})\] for some $x_1' \in G_{p_1}$. Hence, there exists $d'$ such that $d'=x_1'^l$ and hence we have 
 	$$(d',e_{{P_2}}, \dots, e_{{P_m}},e_{\mathbb{Z}_n}) \sim (x_1, x_2, \dots, x_m,x_{m+1}).  $$
 	Thus $D_1$ is a dominating set of $\mathcal{G}_E^{**}(G)$ and therefore $\gamma(\mathcal{G}_E^{**}(G_1)) 
 	\leq   s_1 .$
 	We can similarly show that $\gamma(\mathcal{G}_E^{**}(G_1)) 
 	\leq   s_i $ for any $i$ and that proves that $\gamma(\mathcal{G}_E^{**}(G)) \leq 
 	\displaystyle \min _{ 1 \leq i \leq m}  s_i .$
 	
 	Finally we are left to show that any dominating set  of  $\mathcal{G}_E^{**}(G))$ has cardinality 
 	$\geq 
 	\displaystyle \min _{ 1 \leq i \leq m}  s_i .$ This follows in an identical manner to the proof of the above fact that any dominating set of $\mathcal{G}_E^{**}(G_1))$ has cardinality 
 	$\geq 
 	\displaystyle \min _{ 1 \leq i \leq r}  s_i $ and therefore we omit this. Hence, \eqref{eqn:dominating_connected_2} is proved. 
 \end{proof}

 \begin{corollary}\label{domination number of enhced pwr raph for connected}	
 	Let $G_1$ be a product of non-cyclic abelian $p$-groups, that is, of the following form: \[G_1 = P_1 \times P_2 \times \dots \times  P_m\] where $m \geq 2$ and for all $ 1 \leq i \leq m$, we have \[ P_i = {Z}_{p^{t_{i1}}_i} \times \mathbb{Z}_{p^{t_{i2}}_i} \times \dots \mathbb{Z}_{p^{t_{ik_i}}_i}\]
 	with $k_i \geq 2$ and  $1\leq t_{i1}\leq t_{i2}\leq\cdots\leq t_{ik_i} $. Then
 	\begin{equation*}
 	\label{eqn:dominating_connected3} \gamma(\mathcal{G}_E^{**}(G_1)) = 
 	\displaystyle \min _{ 1 \leq i \leq m}  \frac{ p_i^{k_i}-1}{p_i-1}. 
 	\end{equation*}
 	Let $G = G_1 \times \mathbb{Z}_n$ with $\text{gcd}(n,|G_1|)=1$, then also 
 	\begin{equation*}
 	\label{eqn:dominating_connected_4}
 	\gamma(\mathcal{G}_E^{**}(G)) = 
 	\displaystyle  \min _{ 1 \leq i \leq m}    \frac{ p_i^{k_i}-1}{p_i-1}.
 	\end{equation*}
 \end{corollary}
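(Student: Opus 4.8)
The plan is to derive this corollary directly from Theorem \ref{domination number of enhced pwr raph for connected_nilpotent} by identifying the quantity $s_i$ in the abelian case. Recall that $s_i$ is defined as the number of distinct $p_i$-order subgroups of $G_1 = P_1 \times \cdots \times P_m$. Since the primes $p_1, \dots, p_m$ are pairwise distinct, any subgroup of order $p_i$ in $G_1$ is contained in the Sylow $p_i$-subgroup $P_i$; hence $s_i$ equals the number of subgroups of order $p_i$ in $P_i$. So the only real task is to count these for an abelian $p$-group.

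First I would observe that for $P_i = \mathbb{Z}_{p_i^{t_{i1}}} \times \cdots \times \mathbb{Z}_{p_i^{t_{ik_i}}}$ with $k_i \geq 2$, the set of elements $x \in P_i$ with $x^{p_i} = e$ forms the subgroup $\Omega_1(P_i) \cong (\mathbb{Z}/p_i\mathbb{Z})^{k_i}$, because reducing each cyclic factor $\mathbb{Z}_{p_i^{t_{ij}}}$ modulo its unique subgroup of order $p_i$ contributes exactly one $\mathbb{Z}/p_i\mathbb{Z}$ summand (here $t_{ij} \geq 1$ for every $j$). Every subgroup of order $p_i$ of $P_i$ lies in $\Omega_1(P_i)$ and is exactly a one-dimensional $\mathbb{F}_{p_i}$-subspace of the $k_i$-dimensional vector space $(\mathbb{Z}/p_i\mathbb{Z})^{k_i}$. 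The number of such lines is $\dfrac{p_i^{k_i}-1}{p_i-1}$, so $s_i = \dfrac{p_i^{k_i}-1}{p_i-1}$.

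Substituting this value of $s_i$ into \eqref{eqn:dominating_connected} gives $\gamma(\mathcal{G}_E^{**}(G_1)) = \min_{1 \leq i \leq m} \frac{p_i^{k_i}-1}{p_i-1}$, and substituting into \eqref{eqn:dominating_connected_2} gives the corresponding statement for $G = G_1 \times \mathbb{Z}_n$ with $\gcd(n, |G_1|) = 1$. Since every $P_i$ here is abelian and noncyclic (as $k_i \geq 2$), it is in particular neither cyclic nor generalized quaternion, so the hypotheses of Theorem \ref{domination number of enhced pwr raph for connected_nilpotent} are met and the corollary follows. There is essentially no obstacle here: the content is entirely in Theorem \ref{domination number of enhced pwr raph for connected_nilpotent}, and the only additional ingredient is the elementary count of one-dimensional subspaces of $\mathbb{F}_{p_i}^{k_i}$.
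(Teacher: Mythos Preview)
Your proposal is correct and matches the paper's intended derivation: the corollary is stated immediately after Theorem~\ref{domination number of enhced pwr raph for connected_nilpotent} with no separate proof, and the paper has already remarked (just before Theorem~\ref{thm:domination number p-group times cyclic}) that an abelian $p$-group with $r$ cyclic factors has $\frac{p^r-1}{p-1}$ distinct $p$-order subgroups. Your explicit identification $s_i = \frac{p_i^{k_i}-1}{p_i-1}$ via $\Omega_1(P_i) \cong (\mathbb{Z}/p_i\mathbb{Z})^{k_i}$ and the check that the hypotheses of Theorem~\ref{domination number of enhced pwr raph for connected_nilpotent} are satisfied is exactly what is needed.
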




%

\section{Multiplicity of Laplacian spectral radius}
\label{sec:mul_sec_radius}

In this section, we find the multiplicity of the Laplacian spectral radius of the enhanced power graph of any finite nilpotent group. For that purpose, we recall the definition of the Laplacian matrix $L(\Gamma)=(L_{i,j})_{n \times n}$ of a graph $\Gamma$ with
the vertex set $\{ v_1, v_2, \dots, v_n \}$, where
\begin{equation*}
L_{i, j} = 
\begin{cases}
d_i, & \text{ if } i=j \\
-1, & \text{ if } i \neq j \text{ and } v_i \sim v_j     \\
0, & \text{ if } i \neq j \text{ and } v_i \nsim v_j 
\end{cases}
\end{equation*}
and $d_i$ is the degree of the vertex $v_i.$
 For any graph $\Gamma,$ the characteristic polynomial $\det(xI - L(\Gamma ))$ of $L(\Gamma)$ is called
the Laplacian characteristic polynomial of $\Gamma$ and is denoted by $\Theta ( \Gamma , x).$
Let $\lambda_1(\Gamma)$ be the largest eigenvalue of $L(\Gamma)$. Let 
$$\lambda_1(\Gamma) \geq \lambda_2(\Gamma) \geq \dots \geq \lambda_n(\Gamma) $$ be the eigenvalues of the Laplacian matrix $L(\Gamma)$. The highest eigenvalue $\lambda_1(\Gamma)$ is called the \emph{Laplacian spectral radius} of $\Gamma.$  We denote by $\eta(\lambda_1(\Gamma)),$ the multiplicity of the Laplacian spectral radius or the highest eigenvalue. For a graph $\Gamma,$ let $\Gamma^{\text{com}}$ denote the complement of the graph $\Gamma$ and 
$(\Gamma^{\text{com}})_*$ denote the induced subgraph on $\Gamma^{\text{com}}$ after removing the isolated vertices, if any. Dey in \cite{dey-laplacian_spectrum} proved the following result which connects the multiplicity of the Laplacian spectral radius of a graph $\Gamma.$ 

\begin{theorem}
	\label{thm:main}
	Let $\Gamma$ be a simple graph on $n(\geq 3)$ vertices. Then $ \eta (\lambda_1(\Gamma)) = |Dom(\Gamma)| $ if and only if $\Gamma$ is non-complete,  $({\Gamma}^{\text{com}})_*$ is connected and $\Gamma$ has at least one dominating vertex. 
\end{theorem}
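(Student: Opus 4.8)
The plan is to reduce everything to the complement graph. Write $n$ for the number of vertices and $J$ for the all-ones matrix; then $L(\Gamma)+L(\Gamma^{\text{com}})=nI-J$, and since the all-ones vector $\mathbf{1}$ is in the kernel of both Laplacians while $J$ vanishes on $\mathbf{1}^{\perp}$, one gets the standard spectral correspondence: if $0=\mu_n\le \mu_{n-1}\le\cdots\le\mu_1$ are the Laplacian eigenvalues of $\Gamma$, then those of $\Gamma^{\text{com}}$ are $0$ together with $n-\mu_1\le n-\mu_2\le\cdots\le n-\mu_{n-1}$. I would isolate two consequences before doing anything else. First, by the well-known fact that the multiplicity of $0$ as a Laplacian eigenvalue equals the number of connected components, the number of components of $\Gamma^{\text{com}}$ is exactly $1+\#\{\,i\le n-1:\mu_i=n\,\}$; in particular $\lambda_1(\Gamma)=\mu_1=n$ if and only if $\Gamma^{\text{com}}$ is disconnected, and in that case $\eta(\lambda_1(\Gamma))=\#\{\,i\le n:\mu_i=n\,\}=\#\{\,i\le n-1:\mu_i=n\,\}$ (the index $i=n$ contributes nothing, as $\mu_n=0<n$), which therefore equals $\bigl(\text{number of components of }\Gamma^{\text{com}}\bigr)-1$. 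Second, a vertex of $\Gamma$ is dominating precisely when it is isolated in $\Gamma^{\text{com}}$, so $|Dom(\Gamma)|$ is the number of isolated vertices of $\Gamma^{\text{com}}$, and $(\Gamma^{\text{com}})_*$ is $\Gamma^{\text{com}}$ with exactly those $|Dom(\Gamma)|$ vertices removed.

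Next I would dispose of the two degenerate cases, which account for the hypotheses ``$\Gamma$ non-complete'' and ``$\Gamma$ has a dominating vertex''. If $\Gamma$ is complete, then $\Gamma^{\text{com}}$ is edgeless, $\lambda_1(\Gamma)=n$ with multiplicity $n-1$, while $|Dom(\Gamma)|=n$; as $n\ge 3$ these differ, so equality forces $\Gamma$ non-complete. If $\Gamma$ has no dominating vertex, then $|Dom(\Gamma)|=0$, whereas every eigenvalue has multiplicity at least $1$, so equality forces $\Gamma$ to have a dominating vertex. Hence whenever $\eta(\lambda_1(\Gamma))=|Dom(\Gamma)|$, both of those structural conditions hold automatically.

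It remains to handle the main case and pin down the role of $(\Gamma^{\text{com}})_*$. So assume $\Gamma$ is non-complete with at least one dominating vertex, and put $d=|Dom(\Gamma)|\ge 1$. Then $\Gamma^{\text{com}}$ has $d$ isolated vertices plus a nonempty part $(\Gamma^{\text{com}})_*$ on $n-d\ge 2$ vertices (a non-isolated vertex forces a second one), say with $c\ge 1$ connected components. Thus $\Gamma^{\text{com}}$ has $d+c\ge 2$ components, so it is disconnected, and by the first consequence above $\lambda_1(\Gamma)=n$ and $\eta(\lambda_1(\Gamma))=(d+c)-1$. Therefore $\eta(\lambda_1(\Gamma))=d=|Dom(\Gamma)|$ if and only if $c=1$, i.e.\ if and only if $(\Gamma^{\text{com}})_*$ is connected. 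Combined with the previous paragraph, this yields both implications of the theorem: equality holds exactly when $\Gamma$ is non-complete, has a dominating vertex, and $(\Gamma^{\text{com}})_*$ is connected.

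The only steps needing genuine verification are the complement-spectrum identity and the component count, both standard; there is no real obstacle, only a bit of bookkeeping in separating the eigenvalues equal to $n$ from those equal to $0$ when $\Gamma$ (equivalently $\Gamma^{\text{com}}$) is itself disconnected — and even this is painless here, since a dominating vertex already forces $\Gamma$ to be connected, so in the main case $\mu_{n-1}>0$ and no ambiguity arises. The one conceptual point I would stress is that, after translating via the complement, the entire statement is simply an assertion about isolated vertices and connected components of $\Gamma^{\text{com}}$.
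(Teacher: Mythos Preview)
Your proof is correct and takes a genuinely different route from the paper's. The paper decomposes $\Gamma$ as $K_r\vee\Gamma'$ (with $r=|Dom(\Gamma)|$ and $\Gamma'$ the induced subgraph on the non-dominating vertices), invokes Mohar's formula for the Laplacian characteristic polynomial of a join to obtain $\lambda_i(\Gamma)=n$ for $1\le i\le r$ and $\lambda_{r+1}(\Gamma)=\lambda_1(\Gamma')+r$, and then appeals to Mohar's bound $\lambda_1(\Gamma')\le n-r$ with equality iff $(\Gamma')^{\text{com}}=(\Gamma^{\text{com}})_*$ is disconnected. You bypass the join machinery entirely: from the identity $L(\Gamma)+L(\Gamma^{\text{com}})=nI-J$ you read off that the multiplicity of $n$ as a Laplacian eigenvalue of $\Gamma$ equals (components of $\Gamma^{\text{com}}$)$-1$, and since dominating vertices of $\Gamma$ are exactly the isolated vertices of $\Gamma^{\text{com}}$, the whole statement collapses to the tautology $d+c-1=d\iff c=1$. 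Your argument is more elementary and conceptually transparent, needing only the complement-spectrum relation and the component-count interpretation of the zero eigenvalue; the paper's approach, on the other hand, yields as a by-product the explicit relation $\lambda_{r+1}(\Gamma)=\lambda_1(\Gamma')+r$, which carries a bit more information about the spectrum beyond the top eigenvalue.
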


For the sake of completeness, as the result is not yet published, we at first give a short proof of this result. For that we need the following result of Mohar \cite{mohar-laplacian_spectrum}. 

\begin{theorem}[Mohar]
	\label{thm:mohar_1} Let $\Gamma$ be a graph with $n$ vertices. Then $\lambda_ 1 ( \Gamma  ) \leq n.$ Equality
	holds if and only if $\Gamma^{\text{com}}$ is not connected.
\end{theorem}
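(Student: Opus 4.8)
The plan is to prove both the bound and the equality condition together by exploiting the relationship between the Laplacian of $\Gamma$ and that of its complement $\Gamma^{\text{com}}$. Write $L = L(\Gamma)$ and $\bar L = L(\Gamma^{\text{com}})$, let $A,\bar A$ be the respective adjacency matrices, $D,\bar D$ the degree matrices, and $J$ the all-ones $n\times n$ matrix. Since each pair of distinct vertices is adjacent in exactly one of $\Gamma,\Gamma^{\text{com}}$ we have $A+\bar A = J-I$, and since $d_i+\bar d_i = n-1$ for every vertex we have $D+\bar D = (n-1)I$. Combining these gives the key identity
\[
L+\bar L = (D+\bar D)-(A+\bar A) = (n-1)I-(J-I) = nI-J.
\]
First I would record the standard facts that $L$ and $\bar L$ are symmetric and positive semidefinite (indeed $v^{T}Lv=\sum_{\{i,j\}\in E(\Gamma)}(v_i-v_j)^2\ge 0$), that the all-ones vector $\mathbf{1}$ lies in the kernel of each, and that $\dim\ker\bar L$ equals the number of connected components of $\Gamma^{\text{com}}$.

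Next I would establish $\lambda_1(\Gamma)\le n$ via the Rayleigh--Ritz characterization. Writing any vector as $v=c\mathbf{1}+w$ with $w\perp\mathbf{1}$, one has $Lv=Lw$ and $v^{T}Lv=w^{T}Lw$ because $L\mathbf{1}=0$, while $|v|^2=c^2n+|w|^2\ge|w|^2$. On the subspace $\mathbf{1}^{\perp}$ the matrix $J$ acts as zero, so the identity above restricts there to $L+\bar L=nI$; hence for $w\perp\mathbf{1}$,
\[
w^{T}Lw = n|w|^2 - w^{T}\bar L w \le n|w|^2,
\]
using $\bar L\succeq 0$. Therefore $\tfrac{v^{T}Lv}{|v|^2}=\tfrac{w^{T}Lw}{c^2n+|w|^2}\le\tfrac{w^{T}Lw}{|w|^2}\le n$, and maximizing over $v\ne 0$ yields $\lambda_1(\Gamma)\le n$.

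For the equality condition I would observe that since $\mathbf{1}$ is an eigenvector for the eigenvalue $0$, when $\lambda_1>0$ any eigenvector realizing $\lambda_1$ may be taken in $\mathbf{1}^{\perp}$; thus $\lambda_1=n$ holds exactly when there is a nonzero $w\perp\mathbf{1}$ with $w^{T}Lw=n|w|^2$. By the displayed restriction this is equivalent to $w^{T}\bar L w=0$, and for a positive semidefinite matrix the quadratic form vanishes only on its kernel, so this forces $w\in\ker\bar L$. Hence $\lambda_1=n$ if and only if $\ker\bar L\cap\mathbf{1}^{\perp}\ne\{0\}$. Since $\mathbf{1}\in\ker\bar L$ always, this intersection is nontrivial precisely when $\dim\ker\bar L\ge 2$, i.e.\ when $\Gamma^{\text{com}}$ has at least two connected components, giving the claimed equivalence (the degenerate case $\lambda_1=0$, forcing $\Gamma$ edgeless and $\Gamma^{\text{com}}$ complete, is consistent since then $\lambda_1=0<n$).

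I expect the main obstacle to be the careful bookkeeping in the equality case: one must justify that the maximizer of the Rayleigh quotient can be chosen orthogonal to $\mathbf{1}$, then upgrade the scalar condition $w^{T}\bar L w=0$ to the vector statement $\bar L w=0$ (this is where positive semidefiniteness, not mere nonnegativity at the optimum, is essential), and finally invoke the standard identification of $\dim\ker\bar L$ with the number of connected components of $\Gamma^{\text{com}}$.
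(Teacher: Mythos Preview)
Your proof is correct. The argument via the identity $L(\Gamma)+L(\Gamma^{\text{com}})=nI-J$, restricting to $\mathbf{1}^{\perp}$, and using positive semidefiniteness of $L(\Gamma^{\text{com}})$ is the standard one and all the steps you outline are sound. The one place where the writeup could be tightened is the equality case: rather than passing through the Rayleigh quotient, it is cleaner to note directly that $L(\Gamma)$ and $L(\Gamma^{\text{com}})$ share the orthonormal eigenbasis obtained by diagonalising on $\mathbf{1}^{\perp}$, so that the eigenvalues satisfy $\lambda_i(\Gamma)+\lambda_{n+1-i}(\Gamma^{\text{com}})=n$ for $i\ge 2$; this makes both the bound and the equality condition immediate. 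But your version is also fine.

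Note, however, that the paper does \emph{not} give its own proof of this statement: Theorem~\ref{thm:mohar_1} is simply quoted from Mohar's paper and used as a black box in the proof of Theorem~\ref{thm:main}. So there is no ``paper's proof'' to compare against; you have supplied a proof where the paper supplies only a citation.
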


The union of graphs $\Gamma_1$ and $\Gamma_2$, denoted by $\Gamma_1 \cup \Gamma_2$, is the graph with vertex set
$V (\Gamma_1 ) \cup V (\Gamma_2 )$ and edge set is the union of all the edges of $\Gamma_1$ and all the edges of $\Gamma_2.$ If $\Gamma_1$ and $\Gamma_2$
are disjoint, that is, they do not have any common vertices, we refer to their union as a disjoint
union, and denote it by
$\Gamma_1 + \Gamma_2$. If $\Gamma_1$ and $\Gamma_2$ are disjoint, their join 
$\Gamma_1 \vee \Gamma_2$ is the
graph obtained by taking $\Gamma_1 + \Gamma_2$ and adding all edges $\{u, v\}$ with $u \in V (\Gamma_1 )$ and
$v \in V (\Gamma_2 ).$ Mohar in the same paper \cite{mohar-laplacian_spectrum} proved the following result which provides the Laplacian spectrum of the join of two graphs. 

\begin{theorem}[Mohar] 
	\label{thm:mohar_2}
	Let $\Gamma_1$ and $\Gamma_2$ are disjoint graphs on $n_1$ and $n_2$ vertices. Then, 
	$$ \Theta (\Gamma_1 \vee \Gamma_2 , x) =  \frac{x(x-n_1-n_2)}{(x-n_1)(x-n_2)} \Theta (\Gamma_1, x-n_2) \Theta (\Gamma_2, x-n_1)  .$$
\end{theorem}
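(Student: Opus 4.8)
The plan is to write the Laplacian of the join in block form and then block-diagonalise it against an explicit orthogonal decomposition of $\mathbb{R}^{n_1+n_2}$. Order the vertices of $\Gamma_1\vee\Gamma_2$ so that those of $\Gamma_1$ come first. In the join every vertex of $\Gamma_1$ becomes adjacent to all $n_2$ vertices of $\Gamma_2$ and conversely, so the degree of a $\Gamma_1$-vertex increases by $n_2$, that of a $\Gamma_2$-vertex by $n_1$, and the two off-diagonal blocks are all-ones matrices carrying a minus sign. Writing $L_i=L(\Gamma_i)$, $J=J_{n_1\times n_2}$ for the all-ones matrix, $I_a$ for the $a\times a$ identity, this yields
\begin{equation*}
xI-L(\Gamma_1\vee\Gamma_2)=\begin{pmatrix}(x-n_2)I_{n_1}-L_1 & J \\ J^{\mathsf T} & (x-n_1)I_{n_2}-L_2\end{pmatrix}.
\end{equation*}

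The key observation I would exploit is that the all-ones vector $\mathbf 1_a$ is a null vector of every graph Laplacian, while $J$ kills every vector orthogonal to $\mathbf 1_{n_2}$ and $J^{\mathsf T}$ kills every vector orthogonal to $\mathbf 1_{n_1}$. Accordingly I would decompose $\mathbb{R}^{n_1+n_2}=V_0\oplus W_1\oplus W_2$, where $V_0=\mathrm{span}\{(\mathbf 1_{n_1},0),(0,\mathbf 1_{n_2})\}$ is two-dimensional, $W_1=\mathbf 1_{n_1}^{\perp}\times\{0\}$, and $W_2=\{0\}\times\mathbf 1_{n_2}^{\perp}$. Since each $L_i$ is symmetric with eigenvector $\mathbf 1_{n_i}$, it preserves $\mathbf 1_{n_i}^{\perp}$; together with the vanishing of the off-diagonal blocks on $W_1$ and on $W_2$, this makes $xI-L(\Gamma_1\vee\Gamma_2)$ block-diagonal with respect to $V_0\oplus W_1\oplus W_2$, so its determinant is the product of the three restricted determinants.

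The three factors are then routine. On $V_0$, using $L_i\mathbf 1_{n_i}=0$, $J\mathbf 1_{n_2}=n_2\mathbf 1_{n_1}$, $J^{\mathsf T}\mathbf 1_{n_1}=n_1\mathbf 1_{n_2}$, the restricted operator is the $2\times 2$ matrix $\left(\begin{smallmatrix}x-n_2 & n_2\\ n_1 & x-n_1\end{smallmatrix}\right)$, with determinant $(x-n_2)(x-n_1)-n_1n_2=x(x-n_1-n_2)$. On $W_1$ the restriction of $(x-n_2)I-L_1$ has determinant $\prod_{i=2}^{n_1}\big((x-n_2)-\mu_i\big)$, where $0=\mu_1\le\mu_2\le\cdots\le\mu_{n_1}$ are the Laplacian eigenvalues of $\Gamma_1$; since $\Theta(\Gamma_1,y)=\prod_{i=1}^{n_1}(y-\mu_i)=y\prod_{i=2}^{n_1}(y-\mu_i)$, this equals $\Theta(\Gamma_1,x-n_2)/(x-n_2)$. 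Symmetrically the $W_2$-block contributes $\Theta(\Gamma_2,x-n_1)/(x-n_1)$, and multiplying gives
\begin{equation*}
\Theta(\Gamma_1\vee\Gamma_2,x)=\frac{x(x-n_1-n_2)}{(x-n_1)(x-n_2)}\,\Theta(\Gamma_1,x-n_2)\,\Theta(\Gamma_2,x-n_1).
\end{equation*}

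The only point needing a word of care — and it is purely formal rather than a genuine obstacle — is that the right-hand side looks like a rational function with poles at $x=n_1$ and $x=n_2$; one should observe that $(x-n_2)\mid\Theta(\Gamma_1,x-n_2)$ and $(x-n_1)\mid\Theta(\Gamma_2,x-n_1)$, again because $0$ is always a Laplacian eigenvalue, so the apparent poles cancel and the statement is an honest polynomial identity (equivalently, one proves it as an identity of rational functions in $x$, as Mohar does). Beyond this bookkeeping, the entire content is the orthogonal splitting that diagonalises the block matrix; once that is in place the rest is the short determinant computation above.
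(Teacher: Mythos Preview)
Your argument is correct. The block form of $xI-L(\Gamma_1\vee\Gamma_2)$ is right, the orthogonal decomposition $V_0\oplus W_1\oplus W_2$ genuinely block-diagonalises it (because $J^{\mathsf T}$ annihilates $\mathbf 1_{n_1}^{\perp}$ and $L_1$ preserves it, and symmetrically for the other block), and the three determinant computations are accurate; the remark about the apparent poles cancelling via $0\in\mathrm{Spec}(L_i)$ is also to the point.

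As for comparison: the paper does not give a proof of this statement at all. It is quoted as a known result of Mohar \cite{mohar-laplacian_spectrum} and then used as a black box to derive equation~\eqref{eqn:vital}. So you have supplied a self-contained proof where the paper simply cites one. Your eigenspace-splitting approach is the standard way to establish Mohar's formula and is exactly what one would expect a reader to reconstruct if asked.
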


For a graph $\Gamma$ with $r$ dominating vertices, let $\Gamma_1$  be the complete graph $K_r$ by taking all the dominating vertices and let $\Gamma'$ be the induced graph on the remaining $n-r$ vertices. Clearly $\Gamma'$ is a graph in $n-r$ vertices with no dominating vertex.  By Theorem \ref{thm:mohar_2}, we then have,

\begin{eqnarray}
\Theta (K_r \vee \Gamma', x) & = & \frac{x(x-n)}{(x-r)(x-(n-r))} \Theta (K_r, x-(n-r)) \Theta (\Gamma', x-r) \nonumber \\
& = & \frac{x(x-n)}{(x-r)(x-n+r))} (x-n)^{r-1}(x-n+r) \Theta (\Gamma', x-r) \nonumber \\
& = & \frac{x(x-n)^r}{(x-r)} \Theta (\Gamma', x-r)  \label{eqn:vital}
\end{eqnarray}

The second line follows from the fact that the Laplacian matrix of the complete graph has eigenvalues $n$ with multiplicity $n-1$ and $0$ with multiplicity $1$. It is easy to note that $ \Theta (\Gamma', x-r)$ equals with the characteristic polynomial
of the submatrix of $L(\Gamma)$ obtained after deleting rows and columns corresponding to the dominating vertices of $\Gamma$. We then have 
\begin{equation}
\lambda_i(\Gamma)= n \hspace{ 3 mm}  \mbox{for} \hspace{3 mm}
{ 1 \leq i \leq r } \hspace{3 mm} \mbox{and} \hspace{3 mm} \lambda_{r+1}(\Gamma)=\lambda_1(\Gamma')+r 
\label{eqn:vital_2}
\end{equation}

\vspace{2 mm}

We are now in a position to prove Theorem \ref{thm:main}.

\vspace{2 mm}

\begin{proof}[Proof of Theorem \ref{thm:main}]
	We prove the forward implication at first. Let the multiplicity of the highest eigenvalue of $L(\Gamma)$ be the number of dominating vertices of $\Gamma$ and we need to show that $\Gamma$ is non-complete, has a dominating vertex and $({\Gamma}^{\text{com}})_*$ is connected. Firstly, the graph $\Gamma$ cannot be complete otherwise the number of dominating vertices would be $n$ and the multiplicity of the largest eigenvalue would be $n-1$. Secondly, $\Gamma$ must have a dominating vertex. Finally, if $({\Gamma}^{\text{com}})_*$ is disconnected then by Theorem \ref{thm:mohar_1}, the highest eigenvalue of $L(\Gamma')$ is $n-r$ as it is easy to check that $(\Gamma')^{\text{com}}=({\Gamma}^{\text{com}})_*$. Thus, by \eqref{eqn:vital_2}, we have $\lambda_{r+1} (\Gamma)= \lambda_1(\Gamma')+r=n-r+r=n$. Therefore the multiplicity of the highest eigenvalue is greater than the number of dominating vertices, which is a contradiction.
	
	For the other direction, we assume that $\Gamma$ is a simple, non-complete graph with a dominating vertex such that $({\Gamma}^{\text{com}})_*$ is connected. Assume that $\Gamma$ has $r$ dominating vertices and we form $K_r$ and $\Gamma'$ as above.  As $(\Gamma')^{\text{com}}=({\Gamma}^{\text{com}})_*$ and
	$({\Gamma}^{\text{com}})_*$  is connected, by Theorem \ref{thm:mohar_1}, we have $\lambda_1(\Gamma')<n-r$.  Thus
	$\lambda_{r+1} (\Gamma)= \lambda_1(\Gamma')+r < n-r+r=n$ and now by using \eqref{eqn:vital_2}  the proof is complete.  
\end{proof}	

Therefore, in order to find out the multiplicity of the Laplacian spectral radius of the enhanced power graph $\mathcal{G}_E(G)$ for any nilpotent group, we at first prove that $\mathcal{G}_E^{**}(G)^{\text{com}}$ is connected.


\begin{theorem}\label{thm: conncted complement graph of cyclic product G_1}
	Let $G$ be a noncyclic nilpotent group which does not have any cyclic sylow subgroup which is generalized quarternion.  Then $\mathcal{G}_E^{**}(G)^{\text{com}}$ is connected. 	
\end{theorem}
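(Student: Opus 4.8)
The plan is to translate adjacency in $\mathcal{G}_E^{**}(G)^{\text{com}}$ into a statement about cyclic subgroups and then exhibit an explicit connected ``skeleton''. First I would record the structure of $G$: since $G$ is noncyclic, nilpotent, and has no generalized quaternion Sylow subgroup, we may write $G\cong G_1\times\mathbb{Z}_n$ with $G_1=P_1\times\cdots\times P_m$, each $P_i$ a $p_i$-group which is neither cyclic nor generalized quaternion, $\text{gcd}(|G_1|,n)=1$, $n\geq 1$, and $m\geq 1$ (if $m=0$ then $G=\mathbb{Z}_n$ is cyclic). By Theorem \ref{Thm: Dom set for nilpotent grp}, the vertices of $\mathcal{G}_E^{**}(G)$ are exactly the elements $v=(v_1,\dots,v_m,z_v)$, $v_i\in P_i$, $z_v\in\mathbb{Z}_n$, with $(v_1,\dots,v_m)\neq e_{G_1}$. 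The elementary observation I would use repeatedly is: $u$ and $v$ are adjacent in $\mathcal{G}_E^{**}(G)^{\text{com}}$ if and only if $\langle u,v\rangle$ is non-cyclic; and because $P_1,\dots,P_m,\mathbb{Z}_n$ have pairwise coprime orders, every subgroup of $G$ splits as the direct product of its projections, so $\langle u,v\rangle=\langle u_1,v_1\rangle\times\cdots\times\langle u_m,v_m\rangle\times\langle z_u,z_v\rangle$, which (being a product of groups of pairwise coprime order whose last factor is cyclic) is non-cyclic if and only if $\langle u_i,v_i\rangle$ is non-cyclic in $P_i$ for some $i$.

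Next I would introduce, for each $i\in[m]$, the set $W_i$ of vertices whose $P_i$-component is nontrivial. Since $(v_1,\dots,v_m)\neq e_{G_1}$ for every vertex $v$, the sets $W_1,\dots,W_m$ cover $V(\mathcal{G}_E^{**}(G))$; and for $i\neq j$ the vertex carrying an order-$p_i$ element in slot $i$, an order-$p_j$ element in slot $j$, trivial entries elsewhere and $z=0$ lies in $W_i\cap W_j$, so the $W_i$ pairwise intersect.

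The core step is to prove that each $W_i$ induces a connected subgraph of $\mathcal{G}_E^{**}(G)^{\text{com}}$. For $v\in W_i$ let $\sigma_i(v)$ be the unique subgroup of order $p_i$ inside the nontrivial cyclic $p_i$-group $\langle v_i\rangle$. If $\sigma_i(u)\neq\sigma_i(v)$, then $\langle u_i,v_i\rangle$ contains two distinct subgroups of order $p_i$ and is therefore non-cyclic, so $u$ and $v$ are adjacent in the complement. Since $P_i$ is neither cyclic nor generalized quaternion it has more than one subgroup of order $p_i$, hence, by Lemma \ref{lemma: no of distinct cyclic subgroups orer p geq 3}, at least three; and each order-$p_i$ subgroup $\langle s\rangle$ equals $\sigma_i$ of the (non-dominating) vertex carrying $s$ in slot $i$ and trivial entries elsewhere. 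Thus $W_i$ is partitioned into at least two nonempty ``$\sigma_i$-classes'': vertices in different classes are complement-adjacent, and two vertices in the same class are joined by a path of length two through any vertex in another class, so $W_i$ is connected in $\mathcal{G}_E^{**}(G)^{\text{com}}$. Finally, since the $W_i$ cover the vertex set, pairwise intersect, and are each connected in the complement, $\mathcal{G}_E^{**}(G)^{\text{com}}$ is connected: given $u\in W_i$ and $v\in W_j$, pick $w\in W_i\cap W_j$ and concatenate a $u$--$w$ path inside $W_i$ with a $w$--$v$ path inside $W_j$.

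The step I expect to be the main obstacle --- or at least the only one requiring genuine structural input --- is the claim that each $P_i$ has at least two subgroups of order $p_i$; this is precisely where the hypothesis that no Sylow subgroup is generalized quaternion enters, via the classical fact that a finite $p$-group with a unique subgroup of order $p$ is cyclic when $p$ is odd and cyclic or generalized quaternion when $p=2$, upgraded to ``at least three'' by Lemma \ref{lemma: no of distinct cyclic subgroups orer p geq 3}. Everything else is routine bookkeeping: the coprime-order direct-product decomposition of subgroups of $G$, reading off the vertex set from Theorem \ref{Thm: Dom set for nilpotent grp}, and checking that the auxiliary vertices used for routing are non-dominating. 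As a consistency check, when $m=1$ the graph $\mathcal{G}_E^{**}(G)$ is disconnected by Theorem \ref{Thm:no sylow subgroups quaternion, ** connected iff}, and the argument collapses to the trivial fact that the complement of a disconnected graph is connected.
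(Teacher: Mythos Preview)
Your argument is correct, but it takes a genuinely different route from the paper's proof. The paper argues in two lines: if some pair $a,b$ had no common neighbour in $\mathcal{G}_E^{**}(G)^{\text{com}}$, then every vertex of $\mathcal{G}_E^{**}(G)$ would be adjacent to $a$ or to $b$, so $\{a,b\}$ would be a dominating set and $\gamma(\mathcal{G}_E^{**}(G))\leq 2$; this contradicts Theorems \ref{thm:domination number p-group times cyclic_nilpotent} and \ref{domination number of enhced pwr raph for connected_nilpotent} together with Lemma \ref{lemma: no of distinct cyclic subgroups orer p geq 3}, which force $\gamma(\mathcal{G}_E^{**}(G))\geq 3$. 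Thus the complement even has diameter at most $2$.

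Your approach instead builds an explicit connected skeleton: you cover the vertex set by the overlapping pieces $W_i$ and show each $W_i$ is complement-connected via the $\sigma_i$-class partition. This is more hands-on and entirely self-contained --- it does not lean on the domination-number computations of Section \ref{sec:domination_number_proper}, only on the structural input (shared with the paper) that each $P_i$ has more than one subgroup of order $p_i$. The trade-off is length and sharpness: the paper's argument is a couple of sentences and yields diameter $\leq 2$ for free, whereas your $W_i$-routing a priori gives paths of length up to four when $u\in W_i$ and $v\in W_j$ with $i\neq j$. Both proofs ultimately hinge on the same key fact about $p$-groups, but the paper packages it through the domination number while you use it directly.
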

\begin{proof}
	Let $a,b$ be vertices of $ \mathcal{G}_E^{**}(G)^{\text{com}}.$ We claim that there exists a vertex $c$ such that $c \sim a $ in  $\mathcal{G}_E^{**}(G)^{\text{com}}$ and $c \sim b$ in $\mathcal{G}_E^{**}(G)^{\text{com}}.$
	Otherwise, every vertex is dominated by at least one of $a$ and $b$ in $\mathcal{G}_E^{**}(G)$ and hence $\gamma (\mathcal{G}_E^{**}(G)) \leq 2,$ which is a contradiction. Thus, our claim is true and hence there exists a path of length $2$ between $a$ and $b$ in $\mathcal{G}_E^{**}(G)^{\text{com}},$ completing the proof. 
\end{proof}
Let $G_1$ be a finite nilpotent group having no sylow subgroups which are generalized quarternion or cyclic. 
\begin{theorem}\label{Thm: mul spec rad for nilpotent grp}
Let $G$ be a finite nilpotent group having no sylow subgroup which is generalized quarternion. That is, either $G=G_1$ or $G=G_1\times \mathbb{Z}_n,$ where $\text{gcd}(|G_1|, n)=1.$ Then 
	\begin{equation*}
	 \eta (\lambda_1(\mathcal{G}_E(G))) = 
	\begin{cases}
	1, & \text{ if } G=G_1 \\
	n,& \text{ if } G=G_1\times \mathbb{Z}_n\text{ and } \text{gcd}(|G_1|, n)=1.
	\end{cases}
	\end{equation*}
\end{theorem}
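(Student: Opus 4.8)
The plan is to invoke Theorem~\ref{thm:main} with $\Gamma=\mathcal{G}_E(G)$; once its three hypotheses are checked, the multiplicity equals $|\mathrm{Dom}(\mathcal{G}_E(G))|$, and the two cases of the stated formula follow immediately from the explicit description of the dominating set in Theorem~\ref{Thm: Dom set for nilpotent grp}.

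First I would dispose of the easy hypotheses. Since $G_1$ has no cyclic sylow subgroup and is nontrivial, $G_1$ is noncyclic, hence so is $G$ (whether $G=G_1$ or $G=G_1\times\mathbb{Z}_n$); in particular $|G|\ge |G_1|\ge 4$, so $\Gamma$ has at least $3$ vertices. By Lemma~\ref{enhd coplte iff cyclic}, $\Gamma=\mathcal{G}_E(G)$ is non-complete, and by the very definition of the enhanced power graph the identity $e$ is a dominating vertex, so $\mathrm{Dom}(\Gamma)\ne\emptyset$.

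The substantive step is to check that $(\Gamma^{\mathrm{com}})_*$ is connected, and for this I would first argue that $(\Gamma^{\mathrm{com}})_*=\mathcal{G}_E^{**}(G)^{\mathrm{com}}$ exactly. A vertex is isolated in $\Gamma^{\mathrm{com}}$ if and only if it is adjacent in $\Gamma$ to every other vertex, i.e.\ if and only if it lies in $\mathrm{Dom}(\Gamma)$; deleting these vertices from $\Gamma^{\mathrm{com}}$ leaves the induced subgraph of $\Gamma^{\mathrm{com}}$ on $G\setminus\mathrm{Dom}(\Gamma)$, which is precisely the complement of $\mathcal{G}_E^{**}(G)$. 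Moreover $\mathcal{G}_E^{**}(G)$ has no dominating vertex at all --- a dominating vertex of $\mathcal{G}_E^{**}(G)$ would also be adjacent in $\mathcal{G}_E(G)$ to the deleted vertices of $\mathrm{Dom}(\Gamma)$ and hence would itself belong to $\mathrm{Dom}(\Gamma)$, a contradiction --- so $\mathcal{G}_E^{**}(G)^{\mathrm{com}}$ has no further isolated vertices and the identification is as claimed. Since $G$ is a noncyclic nilpotent group with no generalized quaternion sylow subgroup, Theorem~\ref{thm: conncted complement graph of cyclic product G_1} applies and gives that $\mathcal{G}_E^{**}(G)^{\mathrm{com}}$ is connected.

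With all hypotheses in place, Theorem~\ref{thm:main} yields $\eta(\lambda_1(\mathcal{G}_E(G)))=|\mathrm{Dom}(\mathcal{G}_E(G))|$. Finally, Theorem~\ref{Thm: Dom set for nilpotent grp} gives $\mathrm{Dom}(\mathcal{G}_E(G_1))=\{e\}$, so the multiplicity is $1$ when $G=G_1$, while $\mathrm{Dom}(\mathcal{G}_E(G_1\times\mathbb{Z}_n))=\{(e,x):x\in\mathbb{Z}_n\}$ has cardinality $n$, so the multiplicity is $n$ when $G=G_1\times\mathbb{Z}_n$. I do not expect any serious obstacle: the only point requiring care is the bookkeeping in the previous paragraph --- verifying that passing to $(\Gamma^{\mathrm{com}})_*$ lands exactly on $\mathcal{G}_E^{**}(G)^{\mathrm{com}}$ with no spurious isolated vertices --- and observing that the hypothesis excluding generalized quaternion sylow subgroups is precisely what makes Theorem~\ref{thm: conncted complement graph of cyclic product G_1} (equivalently, $\gamma(\mathcal{G}_E^{**}(G))>2$) available; without it the complement of the proper enhanced power graph need not be connected and the clean identity $\eta=|\mathrm{Dom}|$ can fail.
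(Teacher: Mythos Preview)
Your proposal is correct and follows essentially the same route as the paper: invoke Theorem~\ref{thm:main} after checking its hypotheses via Lemma~\ref{enhd coplte iff cyclic} and Theorem~\ref{thm: conncted complement graph of cyclic product G_1}, then read off $|\mathrm{Dom}(\mathcal{G}_E(G))|$ from Theorem~\ref{Thm: Dom set for nilpotent grp}. You actually supply more detail than the paper does, in particular the explicit identification $(\Gamma^{\mathrm{com}})_*=\mathcal{G}_E^{**}(G)^{\mathrm{com}}$ and the observation (implicit in the paper's proof of Theorem~\ref{thm: conncted complement graph of cyclic product G_1}) that $\gamma(\mathcal{G}_E^{**}(G))\geq 3$ by Lemma~\ref{lemma: no of distinct cyclic subgroups orer p geq 3} together with Theorems~\ref{thm:domination number p-group times cyclic_nilpotent} and~\ref{domination number of enhced pwr raph for connected_nilpotent}.
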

\begin{proof}
By using Theorems \ref{thm:main} and \ref{thm: conncted complement graph of cyclic product G_1}, we can see that 
	$\eta(\lambda_1(\mathcal{G}_E(G)))= |\text{Dom}(\lambda_1(\mathcal{G}_E(G)))|$ when $G_1$ is a finite nilpotent group which has no sylow subgroup which is generalized quarternion. The proof of this theorem now follows from Theorem \ref{Thm: Dom set for nilpotent grp}. 	
\end{proof}
\subsection*{Acknowledgement}
The first author would like to thank Prof. Arvind Ayyer for his constant support and encouragement. The second author would like to thank Prof. Sivaramakrishnan Sivasubramanian for his constant support and encouragement. The authors sincerely thank Prof. Peter Cameron for some fruitful suggestions. The authors are also thankful to Prof. Angsuman Das for many helpful discussions during this work.
  The authors have been greatly inspired and motivated by participating in the Research Discussion on Graphs and Groups, organized by Department of Mathematics, CUSAT, India. 
  The first author was supported by NBHM Post Doctoral Fellowship grant 0204/52/2019/RD-II/339. The second author was supported by IIT Bombay Post Doctoral Fellowship. 
\bibliographystyle{amsplain}
\bibliography{gen-inv-lcp.bib}
\end{document}